\documentclass[a4paper, 12pt, oneside]{amsart}
\usepackage[utf8]{inputenc}
\usepackage{amsmath,amsthm,amsfonts,amssymb}
\usepackage{url}

\usepackage{graphicx} 

\usepackage{mathtools} 

\usepackage[shortlabels]{enumitem}

\usepackage{upgreek}

\usepackage{tabularx}
\usepackage{tabularray}
\usepackage{geometry}
\usepackage[dvipsnames]{xcolor}
\usepackage{graphicx}
\usepackage[shortlabels]{enumitem}
\usepackage{float}
\usepackage{tikz-cd}
\usetikzlibrary{arrows}

\definecolor{cvut}{RGB}{0, 101, 189}
\definecolor{uab}{RGB}{0, 133, 57}

\makeatletter
\newcommand{\leqnomode}{\tagsleft@true}
\newcommand{\reqnomode}{\tagsleft@false}
\makeatother





\DeclareMathOperator{\ske}{skew}

\newcommand\numberthis{\addtocounter{equation}{1}\tag{\theequation}}
\newcommand{\rest}[2]{\left.#1\right|_{#2}}

\newcommand{\dif}{\mathrm{d}}

\DeclareMathOperator{\en}{\mathrm{End}}

\newcommand{\e}{\mathrm{e}}

\newcommand{\pr}{\mathrm{pr}}

\newcommand{\gder}{\mathrm{gDer}}

\newcommand{\jac}{\mathrm{Jac}}

\newcommand{\cg}{\textsl{g}}

\newcommand\varlist {,\makebox[1em][c]{.\hfil.\hfil.},}

\newcommand{\nb}{_{\scalebox{0.45}{$\nabla$}}}

\newcommand{\s}[1]{\scalebox{0.5}{#1}}



\newcommand{\R}{{\mathbb{R}}}

\newcommand{\cC}{\mathcal{C}}

\newcommand{\cCi}{\cC^\infty}

\newcommand{\la}{\langle}
\newcommand{\ra}{\rangle}

\DeclareMathOperator{\rk}{rk}

\DeclareMathOperator{\im}{im}
\DeclareMathOperator{\id}{id}

\DeclareMathOperator{\Sym}{Sym}

\newtheorem{theorem}{Theorem}[section]
\newtheorem{corollary}[theorem]{Corollary}
\newtheorem{lemma}[theorem]{Lemma}
\newtheorem{proposition}[theorem]{Proposition}
\newtheorem{example}[theorem]{Example}

\newenvironment{customthm}[1]
{\innercustomthm}
{\endinnercustomthm}

\newenvironment{customcor}[1]
{\innercustomcor}
{\endinnercustomcor}

\theoremstyle{definition}
\newtheorem{definition}[theorem]{Definition}
\newtheorem*{definition*}{Definition}

\newenvironment{customdef}[1]
{\innercustomdef}
{\endinnercustomdef}

\theoremstyle{remark}

\newtheorem{remark}[theorem]{Remark}

\title[]{Courant algebroid lifts\\
and curved Courant algebroids}

\author[F. Moučka]{Filip Moučka}
\author[R. Rubio]{Roberto Rubio}

\address{Universitat Aut\`onoma de Barcelona, 08193 Barcelona, Spain;\newline \indent Faculty of Nuclear Sciences and Physical Engineering, Czech Technical\newline\indent University in Prague, 115 19 Prague 1, Czech Republic}
\email{filip.moucka@autonoma.cat; mouckfil@cvut.cz}

\address{Universitat Aut\`onoma de Barcelona, 08193 Barcelona, Spain}
\email{roberto.rubio@uab.es}

\thanks{This project has been supported by MICIU/AEI/10.13039/501100011033 and the EU FEDER under the grants PID2022-137667NA-I00 (GENTLE) and CNS2024-154695 (DÉCOLLAGE). The first author has been partially supported by the Grant Agency of the Czech Technical University in Prague, grant No. SGS25/163/OHK4/3T/14. The second author has also received support from the MICIU/AEI and the EU FSE under the Ramón y Cajal fellowship RYC2020-030114-I and from the AGAUR under the grant 2021-SGR-01015.
}

\begin{document}

\begin{abstract}
We introduce the Courant algebroid lift, a new construction that takes a Courant algebroid together with a vector bundle connection and produces, when the connection is flat in the image of the anchor, a Courant algebroid. In general, this lift produces a Courant-like structure that we call a curved Courant algebroid. We start by establishing a hierarchy of Courant algebroid properties and their associated structures. In this setting, we introduce curved Courant algebroids, which we show to be related to connections with torsion and curved differential graded Lie algebras. We use this to provide a classification of exact curved Courant algebroids. We show that the Courant algebroid lift of an exact Courant algebroid yields a natural link between the Patterson-Walker metric and generalized geometry. By lifting non-exact Courant algebroids, we establish a relation of these lifts to Lie algebras, Poisson and special complex geometry. Finally, we show that Courant algebroid lifts provide a large class of examples of Courant algebroid actions.
\end{abstract}

\maketitle

\section{Introduction}
Courant algebroids \cite{CouDM, LiuMTLB} offer a very helpful framework for the study of geometric structures and equations in mathematical physics. For instance, they have been recently used to find an obstruction to the existence of solutions of the Hull–Strominger system \cite{futaki-inv}. Its definition makes use of the reduction of an exact Courant algebroid over the total space of a principle $G$-bundle, which does not produce a Courant algebroid unless the first Pontryagin class vanishes \cite[Let. 4]{SevL}. Thus, some constructions involving a Courant algebroid as the starting data naturally end up with a relaxed Courant-like structure. On the other hand, Courant algebroids have also been used to fully describe $\mathcal{N}=1$ supergravity in ten dimensions \cite{FricoDDS}. In this description, the Riemann curvature tensor of a Courant algebroid connection \cite{CouVys} plays a key role. Such tensor is well defined even in a weaker setting than that of a Courant algebroid, where some results can be naturally extended, as observed, for instance, for \cite[Thm. 3.1]{Palatini}. This illustrates another general phenomenon: theory originally developed in the setting of Courant algebroids remains meaningful in a broader context. Courant algebroid lifts and curved Courant algebroids, the main topics of this work, are natural instances of these two phenomena. 

Our first aim is to study and establish a precise hierarchy of the various properties satisfied by Courant algebroids (Figure \ref{CApropfig}), from which different notions of Courant algebroids (ante, pre, and the usual one) have arisen in the literature. This study motivates the introduction of a new class of structures.
\begin{customdef}{\ref{def: curved-ca}}
 We call a metric algebroid $(\mathbb{E},\la\,\,,\,\ra,\rho,[\,\,,\,])$ a \textbf{curved Courant algebroid} if $F(a,b)\coloneqq \rho([a,b])-[\rho(a),\rho(b)]_\text{Lie}$ satisfies:
\begin{itemize}
\item $F\in\Gamma(\wedge^2\mathbb{E}^*\otimes TM)$ and $F(a,b)=0$ for $a\in\ker\rho$.
\end{itemize}
We call $F$ the \textbf{curvature} of the curved Courant algebroid.
\end{customdef}
\noindent A first remarkable feature of curved Courant algebroids is that, when the algebroid is regular, $F$ is written as $F(a,b)=T(\rho(a),\rho(b))$ for some $T\in\Omega^2(M,TM)$, which is a torsion-like tensor field on the base manifold. In the transtive case, $T$ is unique.

Our second aim is to lay the foundations for the study of curved Courant algebroids. We begin by introducing \textit{curved Cartan calculus}, an extension of Cartan calculus depending on the choice of $T\in\Omega^2(M,TM)$. The \textit{curved exterior derivative} $\dif^{\s{$T$}}=\dif-\iota_T$ plays a generating role, thus giving the curved Lie derivative $L^{\s{$T$}}$ and the curved bracket $[\,\,,\,]^{\s{$T$}}$ on vector fields. The curvature $\mathcal{R}\coloneqq \dif^{\s{$T$}}\circ\dif^{\s{$T$}}$ is non-vanishing for $T\neq 0$ and the following graded-commutation relations hold:
 \begin{align*}
[\iota_X,\iota_Y]_\text{g}&=0, &[\iota_X,\dif^{\s{$T$}}]_\text{g}&=L^{\s{$T$}}_X, & [\dif^{\s{$T$}}, L^{\s{$T$}}_X]_\text{g}&=[\mathcal{R},\iota_X]_\text{g},\\
 [\dif^{\s{$T$}},\dif^{\s{$T$}}]_\text{g}&=2\mathcal{R}, & [L^{\s{$T$}}_X,\iota_Y]_\text{g}&=\iota_{[X,Y]^{\s{$T$}}}, & [L^{\s{$T$}}_X,L^{\s{$T$}}_Y]_\text{g}&=L_{[X,Y]^{\s{$T$}}}+[[\iota_X,\mathcal{R}]_\text{g},\iota_Y]_\text{g}.
 \end{align*}
Curved Cartan calculus allows us to show a close relation of curved Courant algebroids to connections with torsion and curved differential graded Lie algebras. Moreover, it provides a natural framework for describing exact curved Courant algebroids, for which we give a complete classification: those with different curvatures are not isomorphic, whereas for those with the same curvature, we obtain:
\begin{customthm}{\ref{thm: curved-class}}
We have the one-to-one correspondence
\begin{equation*}
\frac{\Omega^3(M)}{\im\rest{\dif^{\s{$T$}}}{\Omega^2(M)}} \cong\left\{\begin{array}{c}
 \text{exact curved Courant algebroids over }M\\
 \text{with curvature $T\in\Omega^2(M,TM)$}
 \end{array}\right\}\Big/\sim.
\end{equation*}
 \end{customthm}
\noindent Namely, every exact curved Courant algebroid is isomorphic to $TM\oplus T^*M$ with the canonical symmetric pairing $\la\,\,,\,\ra_+$, the anchor being the projection on $TM$, and the bracket, for $X+\alpha, Y+\beta\in\Gamma(TM\oplus T^*M)$, given by
\begin{equation*}
 [X+\alpha,Y+\beta]^{\s{$T$}}_{\s{$H$}}\coloneqq [X,Y]^{\s{$T$}}+L^{\s{$T$}}_X\beta-\iota_Y\dif^{\s{$T$}}\alpha+\iota_Y\iota_XH
\end{equation*}
for some $T\in\Omega^2(M,TM)$ and $H\in\Omega^3(M)$. This bracket also motivates an extension of the theory of derived brackets \cite{KosDB} 
(Section \ref{sec: derived-brackets}). 

Finally, we use this classification to show that ante, curved, pre and usual Courant algebroids are strictly different classes (Corollary \ref{cor: strict}). For better readability, most of the proofs of Sections \ref{sec: hierarchy} and \ref{sec: exact-curved} are in Appendix \ref{app:proofs}.

Our third and main aim is the definition of the \textit{Courant algebroid lift}. This is a construction that takes as an input a Courant-like structure $(\mathbb{E},\la\,\,,\,\ra,\rho,[\,\,,\,])$ over $M$, where $\mathbb{E}$ is a vector bundle of the form $\mathbb{E}=TM\oplus E$ for some vector bundle $E$, together with a vector bundle connection $\nabla$ on $E$ (note that many Courant algebroids are of this form, including, but not limited to, transitive Courant algebroids). The output of this construction is a Courant-like structure $(TE,g\nb,\rho\nb,[\,\,,\,]\nb)$ over $E$ given by the unique extensions of
\begin{align*}
 g\nb(\phi a,\phi b)&\coloneqq \pr^*\la a, b\ra, & \rho\nb(\phi a)&\coloneqq\phi\rho(a), & [\phi a,\phi b]\nb&\coloneqq \phi [a,b],
\end{align*}
where $a,b\in\Gamma(\mathbb{E})$ and the map $\phi\colon\Gamma(\mathbb{E})\rightarrow\mathfrak{X}(E)$ is induced by the connection $\nabla$ via the horizontal lift on $\mathfrak{X}(M)$ and the vertical lift on $\Gamma(E)$.

We first show that the lift of a curved Courant algebroid with curvature $F$ is a curved Courant algebroid, whose curvature depends on both $F$ and the curvature $R\nb$ of the connection $\nabla$ (Proposition \ref{prop: ca-lift-curved-ca}). We then prove one of our main results.

\begin{customthm}{\ref{thm: curved-ca}}
 The lift of a curved Courant algebroid is a Courant algebroid if and only if $R\nb(\rho(a),\rho(b))=0$ and the original tuple is Courant.
\end{customthm}
\noindent The resulting (curved) Courant algebroid is particularly interesting because it lives on the tangent bundle of a manifold, which allows to study the interaction between ordinary and generalized geometry. Moreover, it is never transitive if $E\neq 0$.

Theorems \ref{thm: curved-class} and \ref{thm: curved-ca} justify the terminology and relevance of curved Courant algebroids. First, exact structures are closely related to curved differential graded Lie algebras, where the vanishing of the curvature corresponds to recovering an ordinary Courant algebroid. Second, curved Courant algebroids arise naturally as lifts of Courant algebroids, with their curvature being determined by the curvature of the underlying vector bundle connection.

The most important example of a Courant algebroid lift is the case of the fundamental Courant algebroid, in which $\mathbb{E}=TM\oplus T^*M$ and $E=T^*M$ (Section \ref{sec: PW-Ca}). The vector bundle connection is the dual of an affine connection on $M$ and the \mbox{(pseudo-)Riem}annian metric we recover on $T^*M$ is precisely the \textit{\mbox{Patterson-Walk}er metric} \cite{PatRE}. The concept of Courant algebroid lifts thus establishes a natural, though previously hidden, bridge between the Patterson-Walker metric and generalized geometry \cite{HitGCYM, GuaGCG}. This is the example that originally led us to consider the Courant algebroid lift and then curved Courant algebroids.

More generally, for lifts of transitive Courant algebroids, the connection must be flat in order to obtain a Courant algebroid. 
Particularly interesting is the so-called \textit{$B_n$-Courant algebroid} living on $TM\oplus T^*M\oplus (M\times\R)$, whose Courant algebroid lift motivates the introduction of the \textit{odd Patterson-Walker metric}. Moreover, we show that every \textit{special complex structure} \cite{AleSCM} induces a Courant algebroid structure on $T(T^*M\oplus (M\times \R))$ by the Courant algebroid lift (Corollary \ref{cor: special-complex}).

Another relevant example comes from Poisson geometry. A Poisson bivector determines, via the double of a Lie bialgebroid construction \cite{LiuMTLB}, a Courant algebroid. By Theorem \ref{thm: curved-ca}, its lift by the dual of an affine connection on $M$ becomes a Courant algebroid if and only if the connection is flat on Hamiltonian vector fields. We use this to show that even the lift by a non-flat connection can be a Courant algebroid (Example \ref{ex: Poisson-Heisenberg}).


Finally, we show that there is a very close relation between Courant algebroid lifts and \textit{Courant algebroid actions} in the sense of \cite{LiBlCAPG}.
\begin{customthm}{\ref{thm: nabla-ca-action}}
Let $(\mathbb{E},\la\,\,,\,\ra,\rho,[\,\,,\,])$ such that $\mathbb{E}=TM\oplus E$ for some vector bundle $\pr\colon E\rightarrow M$ be a Courant algebroid, and let $\nabla$ be a vector bundle connection on $E$. The pair $(\pr,\rho\nb\circ\phi )$ is a Courant algebroid action of $(\mathbb{E},\la\,\,,\,\ra,\rho,[\,\,,\,])$ on $E$ if and only if the corresponding Courant algebroid lift is a Courant algebroid. If this is the case, the stabilizer of the action at an arbitrary point of $E$ is coisotropic. 
\end{customthm}

\noindent Courant algebroid lifts thus offer a wide range of examples of Courant algebroid actions with coisotropic stabilizers. For instance, we have the following:

\begin{customcor}{\ref{cor: exact-action}}
 An exact Courant algebroid (regarded as $\mathbb{T}M$) acts on $T^*M$ by flat affine connections on $M$.
\end{customcor}

\begin{customcor}{\ref{cor: special-complex-action}}
 \sloppy The $B_n$-Courant algebroid (Section \ref{sec: Bn}) acts on \mbox{$T^*M\oplus (M\times\R)$} by special complex structures on $M$.
\end{customcor}

\begin{customcor}{\ref{cor: Poisson-action}}
 The Courant algebroid induced by a Poisson structure on $M$ (Section \ref{sec: Poisson}) acts on $T^*M$ by affine connections on $M$ that are flat when restricted to Hamiltonian vector fields.
\end{customcor}

In future work, we would like to focus on the compatibility of the Courant algebroid lift construction with the additional structures living on a Courant algebroid like \textit{Dirac structures}, \textit{generalized complex structures}, \textit{generalized metrics}, \textit{Courant algebroid connections}... Another interesting question is the relevance of this construction in physics. Namely, its interaction with \textit{Poisson-Lie T-duality}, whose relation to Courant algebroid actions was noticed in \cite{SevCou}.

\bigskip\bigskip\bigskip

\textbf{Notation and conventions:} We work in the smooth category on a manifold $M$. By $X,Y,Z\in\mathfrak{X}(M)$ we denote vector fields. We use $\alpha,\beta,\eta\in\Omega^1(M)$ for $1$-forms, and $f,\cg\in\cCi(M)$ for functions.

By $\en(\Omega^\bullet(M))$ we denote the $\mathbb{Z}$-graded vector space of endomorphisms of the vector space of differential forms $\Omega^\bullet(M)$. We will make use of the fact that the graded commutator $[A,B]_\text{g}\coloneqq A\circ B-(-1)^{|A||B|}B\circ A$, for $A,B\in\en(\Omega^\bullet(M))$ of degrees $|A|, |B|$, gives $\en(\Omega^\bullet(M))$ the structure of a graded Lie algebra. 

\clearpage

\section{Hierarchy of structures generalizing Courant algebroids}\label{sec: hierarchy}
All the structures we will study share the following shape.
\begin{definition}
	By a \textbf{tuple} $(\mathbb{E},\la\,\,,\,\ra,\rho,[\,\,,\,])$ we mean a $4$-tuple consisting of:
	\begin{itemize}
		\item a vector bundle $\mathbb{E}\rightarrow M$,
		\item a non-degenerate symmetric $2$-form $\langle\,\,,\,\rangle\in\Gamma(\Sym^2\mathbb{E}^*)$,\hfill(the \textbf{pairing})
		\item a vector bundle morphism $\rho\colon \mathbb{E}\rightarrow TM$ over $\id_M$,\hfill(the \textbf{anchor})
		\item an $\mathbb{R}$-bilinear map $[\,\,,\,]\colon\Gamma(\mathbb{E})\times\Gamma(\mathbb{E})\rightarrow\Gamma(\mathbb{E})$.\hfill(the \textbf{bracket})
	\end{itemize}
\end{definition}
We will consistently denote the sections of $\mathbb{E}$ in a tuple by $a,b,c\in\Gamma(\mathbb{E})$.

\subsection{Courant and metric algebroids}

Our definition of tuple is motivated by the theory of Courant algebroids.
\begin{definition}\label{def: ca}
	A \textbf{Courant algebroid} is a tuple $(\mathbb{E},\la\,\,,\,\ra,\rho,[\,\,,\,])$ satisfying
	\begin{enumerate}[(\text{Ca}1),leftmargin=2cm]
		\item $[a,b]+[b,a]=\rho^*\dif \langle a,b\rangle$,\label{ca1}
		\item $\rho(a)\langle b,c\rangle=\langle [a,b],c\rangle+\langle b,[a,c]\rangle$,\label{ca2}
		\item $[a,[b,c]]=[[a,b],c]+[b,[a,c]]$,\label{ca3}
	\end{enumerate}
	where $\rho^*\colon T^*M\rightarrow \mathbb{E}$ is the vector bundle morphism given by $\rho^*\coloneqq \langle\,\,,\,\rangle^{-1}\circ\rho^t$.
\end{definition}
The fundamental example of a Courant algebroid is the generalized tangent bundle $\mathbb{T}M\coloneqq TM\oplus T^*M$, whose sections we denote by $X+\alpha, Y+\beta$.
\begin{example}\label{ex: fund-Ca}
The \textbf{fundamental Courant algebroid} is the tuple consisting of
\begin{itemize}
	\item the bundle $\mathbb{T}M$,
	\item the canonical symmetric pairing $\langle X+\alpha,Y+\beta\rangle_+\coloneqq \alpha(Y)+\beta(X)$,
	\item the projection on the tangent bundle $\rho\coloneqq \mathrm{pr}_{TM}$,	
 	\item the Dorfman bracket $[X+\alpha, Y+\beta]\coloneqq [X,Y]_\emph{Lie}+L_X\beta-\iota_Y\dif \alpha$.
\end{itemize}
\end{example}

We consider the more general framework of \textit{metric algebroids} \cite[Def. 2.1]{VaiGDFT}.

\begin{definition}
A \textbf{metric algebroid} is a tuple $(\mathbb{E},\langle\,\,,\,\rangle,\rho,[\,\,,\,])$ satisfying \ref{ca1} and \ref{ca2}.
\end{definition}

This setup already determines how the bracket interacts with the \mbox{$\cCi(M)$-mod}ule structure of $\Gamma(\mathbb{E})$. 

\begin{lemma}\label{lem: ca-4-5}
For a metric algebroid $(\mathbb{E},\la\,\,,\,\ra,\rho,[\,\,,\,])$ we have that\emph{
\begin{enumerate}[(\text{Ca}1),leftmargin=2cm]
\setcounter{enumi}{3}
\item $[a,fb]=(\rho(a)f)b+f[a,b]$,\label{ca4}
\item $[fa,b]=-(\rho(b)f)a+f[a,b]+\langle a,b\rangle\rho^*\dif f$.\label{ca5}
\end{enumerate}}
\end{lemma}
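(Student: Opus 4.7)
The plan is to derive both identities purely from \ref{ca1}, \ref{ca2}, and the non-degeneracy of the pairing, in the order (Ca4) first and (Ca5) second.

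For \ref{ca4}, the idea is to test $[a,fb]$ against an arbitrary $c\in\Gamma(\mathbb{E})$ using the invariance axiom \ref{ca2}. Expanding $\rho(a)\la fb,c\ra = f\rho(a)\la b,c\ra + (\rho(a)f)\la b,c\ra$ by Leibniz and applying \ref{ca2} twice (once to $\la fb,c\ra$ and once to $\la b,c\ra$), the terms involving $\la b,[a,c]\ra$ cancel and we are left with
\begin{equation*}
 \la [a,fb],c\ra = (\rho(a)f)\la b,c\ra + f\la [a,b],c\ra = \la (\rho(a)f)b + f[a,b],c\ra.
\end{equation*}
Non-degeneracy of $\la\,\,,\,\ra$ then forces the claimed equality.

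For \ref{ca5}, I would first polarize \ref{ca1}: writing it for $a+b$, expanding, and subtracting \ref{ca1} for $a$ and for $b$ yields the skew-symmetry obstruction
\begin{equation*}
 [a,b]+[b,a] = 2\rho^*\dif\la a,b\ra.
\end{equation*}
Applied to $fa$ and $b$, this gives $[fa,b] = -[b,fa] + 2\rho^*\dif(f\la a,b\ra)$. Substituting \ref{ca4} into $[b,fa]$ and using the Leibniz rule on $\rho^*\dif(f\la a,b\ra) = f\rho^*\dif\la a,b\ra + \la a,b\ra\rho^*\dif f$, the term $2f\rho^*\dif\la a,b\ra$ cancels against $-f[b,a]$ after one more application of the polarized identity, leaving exactly $-(\rho(b)f)a + f[a,b] + 2\la a,b\ra\rho^*\dif f$.

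I do not expect any real obstacle: everything reduces to bookkeeping with \ref{ca1} and \ref{ca2}. The only mildly delicate point is to be careful that $\rho^*$ is $\cCi(M)$-linear (it is a vector bundle morphism), so that $\rho^*\dif(f\la a,b\ra)$ really does split as above, and to make sure the polarization of \ref{ca1} is used consistently with the convention $\rho^* = \tfrac{1}{2}\la\,\,,\,\ra^{-1}\circ \rho^t$ fixed in Definition \ref{def: ca}.
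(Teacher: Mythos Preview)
Your proposal is correct and follows essentially the same route as the paper: for \ref{ca4} you test against an arbitrary section using \ref{ca2} twice and invoke non-degeneracy, and for \ref{ca5} you polarize \ref{ca1}, substitute \ref{ca4}, and use the polarized identity once more to collapse $-f[b,a]+2f\rho^*\dif\la a,b\ra$ into $f[a,b]$. The paper's argument is identical up to relabelling (it writes $[a,fc]$ tested against $b$ rather than $[a,fb]$ tested against $c$, and works with the pair $(a,fb)$ in the polarization step rather than $(fa,b)$).
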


\begin{proof} See Appendix \ref{app:proofs}.
\end{proof}

A triple $(\mathbb{E},\rho,[\,\,,\,])$ satisfying \ref{ca3} and \ref{ca4} is called a \textit{Leibniz algebroid} \cite{IbaLA}. By \cite{BarLeib}, its anchor is always an algebra morphism when seen~as
\begin{equation*}
 \rho\colon (\Gamma(\mathbb{E}),[\,\,,\,])\rightarrow(\mathfrak{X}(M),[\,\,,\,]_\text{Lie}).
\end{equation*}
By Lemma \ref{lem: ca-4-5}, any Courant algebroid is a Leibniz algebroid, so it satisfies
\begin{enumerate}[(\text{Ca}6),leftmargin=2cm]
\setcounter{enumi}{5}
\item $\rho([a,b])=[\rho(a),\rho(b)]_{\mathrm{Lie}}$.\label{ca6}
\end{enumerate}

\subsection{Pre and ante-Courant algebroids}\label{sec: prop-Ca}
The fact that \ref{ca6} follows from the definition of a Courant algebroid motivates the following concept \cite[Def. 1.3]{VaipreCA}.

\begin{definition}
A \textbf{\mbox{pre-Cou}rant algebroid} is a metric algebroid satisfying \ref{ca6}.
\end{definition}

For a metric algebroid, there are two equivalent formulations of \ref{ca6} that describe the bracket of an arbitrary element with $\im \rho^*$.

\begin{lemma}\label{lem: ca-7-8}
A metric algebroid $(\mathbb{E},\la\,\,,\,\ra,\rho,[\,\,,\,])$ is pre-Courant if and only if one of the following equivalent properties is true
\emph{\begin{enumerate}[(\text{Ca}1),leftmargin=2cm]
\setcounter{enumi}{6}
\item $[a,\rho^*\alpha]=\rho^*L_{\rho(a)}\alpha$,\label{ca7}
\item $[\rho^*\alpha,a]=-\rho^*\iota_{\rho(a)}\dif \alpha$.\label{ca8}
\end{enumerate}}
\end{lemma}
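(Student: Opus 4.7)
The plan is to first note the identity $\langle \rho^*\alpha,a\rangle=\tfrac{1}{2}\alpha(\rho(a))$, which follows at once from unpacking the definition $\rho^*=\tfrac{1}{2}\langle\,,\,\rangle^{-1}\circ\rho^t$. Through this identity, both \ref{ca7} and \ref{ca8} become statements comparable to \ref{ca6} once tested against the pairing. I would also record the polarization of \ref{ca1}, namely $[a,b]+[b,a]=2\rho^*\dif\langle a,b\rangle$, which holds in any metric algebroid.

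First I would establish the equivalence \ref{ca7} $\Leftrightarrow$ \ref{ca8}. Applying the polarized identity with the second argument equal to $\rho^*\alpha$ and using the key identity above gives
\[
[\rho^*\alpha,a]=-[a,\rho^*\alpha]+\rho^*\dif \iota_{\rho(a)}\alpha.
\]
Combining this with Cartan's formula $L_X\alpha=\iota_X\dif\alpha+\dif\iota_X\alpha$ turns \ref{ca7} into \ref{ca8} and vice versa.

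Next I would show \ref{ca6} $\Leftrightarrow$ \ref{ca7}. The idea is to pair the putative identity of \ref{ca7} against an arbitrary $c\in\Gamma(\mathbb{E})$. By \ref{ca2} applied to $(a,\rho^*\alpha,c)$ together with the key identity,
\[
\langle [a,\rho^*\alpha], c\rangle=\tfrac{1}{2}\rho(a)(\alpha(\rho(c)))-\tfrac{1}{2}\alpha(\rho([a,c])),
\]
while Cartan's formula and the key identity yield
\[
\langle \rho^*L_{\rho(a)}\alpha, c\rangle=\tfrac{1}{2}\rho(a)(\alpha(\rho(c)))-\tfrac{1}{2}\alpha([\rho(a),\rho(c)]_\text{Lie}).
\]
Subtracting,
\[
\langle [a,\rho^*\alpha]-\rho^*L_{\rho(a)}\alpha,\, c\rangle=-\tfrac{1}{2}\alpha\bigl(\rho([a,c])-[\rho(a),\rho(c)]_\text{Lie}\bigr).
\]
Since $\langle\,,\,\rangle$ is non-degenerate and $\alpha\in\Omega^1(M)$ and $c\in\Gamma(\mathbb{E})$ are arbitrary, the vanishing of the left-hand side for all $a,\alpha$ is equivalent to \ref{ca6}, giving the desired equivalence.

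I do not anticipate any substantive obstacle; the only care needed is to keep track of the factor $\tfrac{1}{2}$ in the definition of $\rho^*$ and to invoke Cartan's formula in the correct direction in each of the two equivalences.
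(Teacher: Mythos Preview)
Your proposal is correct and follows essentially the same route as the paper. The paper isolates the two identities you derive (namely $\langle c,[a,\rho^*\alpha]\rangle=\langle c,\rho^*L_{\rho(a)}\alpha\rangle-\tfrac{1}{2}\alpha(F(a,c))$ and $[\rho^*\alpha,a]=-[a,\rho^*\alpha]+\rho^*\dif(\alpha(\rho(a)))$) as a separate technical lemma and then observes that both \ref{ca7} and \ref{ca8} are equivalent to $F=0$; your argument simply unpacks this in place.
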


\begin{proof}See Appendix \ref{app:proofs}.
\end{proof}

We obtain other algebroid structures by replacing \ref{ca6} with a weaker version of it, which we state in terms of the map $F\colon \Gamma(\mathbb{E})\times\Gamma(\mathbb{E})\rightarrow\mathfrak{X}(M)$ given by
\begin{equation}\label{eq: ante-curvature}
F(a,b)\coloneqq \rho([a,b])-[\rho(a),\rho(b)]_\text{Lie}.
\end{equation}

\begin{definition}
\sloppy A metric algebroid $(\mathbb{E},\la\,\,,\,\ra,\rho,[\,\,,\,])$ is an \textbf{ante-Courant algebroid} if
\begin{enumerate}[(\text{Ca}1),leftmargin=2cm]
\setcounter{enumi}{11}
\item $F$ 
is $\cCi(M)$-bilinear and skew-symmetric, that is, $F\in\Gamma(\wedge^2\mathbb{E}^*\otimes TM)$\label{ca12}
\end{enumerate}
\noindent (we leave a gap in the numbering as we introduce an intermediate notion below).
\end{definition}

Ante-Courant algebroids appear in the literature in the context of \textit{double field theory} with a different definition \cite[Def. A.31]{ChatanteCA}. We show their equivalence.

\begin{lemma}\label{lem: ca-13}
 A metric algebroid $(\mathbb{E},\la\,\,,\,\ra,\rho,[\,\,,\,])$ is ante-Courant if and only~if
 \emph{\begin{enumerate}[(\text{Ca}1),leftmargin=2cm]
\setcounter{enumi}{12}
\item $\rho\circ\rho^*=0$.\label{ca13}
\end{enumerate}}
\end{lemma}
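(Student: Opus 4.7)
The plan is to translate condition \ref{ca12} into a statement about $\rho\circ\rho^*$ by examining how $F$ interacts with $\cCi(M)$-scaling in each argument, using the Leibniz-type rules \ref{ca4} and \ref{ca5} from Lemma \ref{lem: ca-4-5}, and then using \ref{ca1} to handle the skew-symmetry.

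First I would compute $F(a,fb)$. By \ref{ca4}, $\rho([a,fb]) = (\rho(a)f)\rho(b) + f\rho([a,b])$, while $[\rho(a),f\rho(b)]_\text{Lie} = (\rho(a)f)\rho(b) + f[\rho(a),\rho(b)]_\text{Lie}$, so the two derivative terms cancel and we get $F(a,fb) = fF(a,b)$ \emph{unconditionally}. Hence $F$ is automatically $\cCi(M)$-linear in the second slot, regardless of whether the metric algebroid is ante-Courant.

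Next I would compute $F(fa,b)$. By \ref{ca5}, the bracket $[fa,b]$ carries the extra term $2\la a,b\ra\rho^*\dif f$, so after applying $\rho$ one obtains
\begin{equation*}
F(fa,b) = fF(a,b) + 2\la a,b\ra(\rho\circ\rho^*)(\dif f).
\end{equation*}
Separately, using \ref{ca1} together with the fact that $F$ is $\R$-bilinear, I would compute
\begin{equation*}
F(a,a) = \rho([a,a]) - [\rho(a),\rho(a)]_\text{Lie} = (\rho\circ\rho^*)\bigl(\dif\la a,a\ra\bigr),
\end{equation*}
so that skew-symmetry of $F$ is equivalent to the vanishing of the right-hand side for every $a\in\Gamma(\mathbb{E})$.

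Now the two implications become clear. If $\rho\circ\rho^*=0$, the two boxed identities show that $F$ is $\cCi(M)$-linear in both arguments and that $F(a,a)=0$, giving \ref{ca12}. Conversely, assuming \ref{ca12}, the identity for $F(fa,b)$ forces $\la a,b\ra(\rho\circ\rho^*)(\dif f) = 0$ for all $a,b\in\Gamma(\mathbb{E})$ and $f\in\cCi(M)$. The main (and only really delicate) step is to promote this to $\rho\circ\rho^*=0$: since $\la\,\,,\,\ra$ is non-degenerate, around any point $p\in M$ we can choose local sections $a,b$ with $\la a,b\ra(p)\neq 0$; dividing by this function and letting $f$ range over a family whose differentials span $T^*_pM$ at $p$ (e.g.\ local coordinates), we conclude $(\rho\circ\rho^*)|_p = 0$. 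As $p$ is arbitrary, $\rho\circ\rho^* = 0$ on $M$, which is \ref{ca13}.
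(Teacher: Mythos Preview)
Your proof is correct and follows essentially the same approach as the paper's: both compute $F(a,a)=(\rho\circ\rho^*)\dif\la a,a\ra$ via \ref{ca1} to handle skew-symmetry, use \ref{ca4}/\ref{ca5} for $\cCi(M)$-linearity, and conclude the converse from the term $\la a,b\ra(\rho\circ\rho^*)\dif f$ together with non-degeneracy of the pairing and the fact that exact $1$-forms locally span $T^*M$. The only cosmetic difference is that for the converse the paper uses the skew-symmetry of $F$ (computing $\rho\rho^*\dif\la a,b\ra=F(a,b)+F(b,a)=0$ and then substituting $fb$), whereas you use the $\cCi(M)$-linearity of $F$ in the first slot; both extract the same identity $\la a,b\ra(\rho\circ\rho^*)\dif f=0$.
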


\begin{proof}
 See Appendix \ref{app:proofs}.
\end{proof}

Condition \ref{ca13} has, on any tuple, two equivalent formulations in terms of the linear subsets $\im\rho^*,\ker\rho\subseteq \mathbb{E}$, which are not necessarily subbundles.

\begin{proposition}\label{prop: coisotropic}
A tuple $(\mathbb{E},\la\,\,,\,\ra,\rho,[\,\,,\,])$ satisfies \emph{\ref{ca13}} if and only if one of the following equivalent properties holds
 \begin{itemize}
 \item $\im\rho^*\subseteq \mathbb{E}$ is isotropic with respect to the pairing.
 \item $\ker\rho\subseteq \mathbb{E}$ is coisotropic with respect to the pairing.
 \end{itemize}
\end{proposition}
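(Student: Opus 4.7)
The plan is to unpack the defining formula of $\rho^*$ to obtain the adjunction identity
\[
 \langle \rho^*\alpha, a\rangle = \tfrac{1}{2}\alpha(\rho(a)) \qquad\text{for all }\alpha\in T^*_xM,\ a\in\mathbb{E}_x,
\]
which follows immediately from $\rho^* = \tfrac12\langle\,,\,\rangle^{-1}\circ\rho^t$. From this one identity the whole proposition unfolds with essentially no further computation; I would work fiberwise throughout, so as not to worry about whether $\im\rho^*$ and $\ker\rho$ are genuine subbundles.

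First I would establish the key pointwise identity
\[
 (\im\rho^*)^{\perp} = \ker\rho,
\]
where $\perp$ is taken with respect to the pairing on the fiber $\mathbb{E}_x$. Indeed, by the adjunction identity $a\in(\im\rho^*)^\perp$ iff $\alpha(\rho(a))=0$ for every $\alpha\in T^*_xM$, and by non-degeneracy of the canonical pairing between $T_xM$ and $T^*_xM$ this is equivalent to $\rho(a)=0$. Taking $\perp$ again and using biduality $(V^\perp)^\perp=V$ for subspaces of a finite-dimensional space equipped with a non-degenerate symmetric form, one obtains $(\ker\rho)^\perp = \im\rho^*$ fiberwise.

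With these two orthogonality relations in hand, the equivalences become formal. The condition $\rho\circ\rho^*=0$ reads $\rho(\rho^*\alpha)=0$ for every $\alpha$, i.e.\ $\im\rho^*\subseteq\ker\rho$. Using $\ker\rho=(\im\rho^*)^\perp$, this is precisely the statement that $\im\rho^*$ is isotropic. Using instead $\im\rho^*=(\ker\rho)^\perp$, the same inclusion $\im\rho^*\subseteq\ker\rho$ reads $(\ker\rho)^\perp\subseteq\ker\rho$, which is coisotropy of $\ker\rho$. In other words, the three conditions collapse to the single inclusion $\im\rho^*\subseteq\ker\rho$.

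The only potential obstacle is the handling of the sets $\im\rho^*$ and $\ker\rho$, which may fail to be subbundles when $\rho$ does not have locally constant rank; the proposition is formulated for an arbitrary tuple, so I cannot appeal to any constant-rank hypothesis. However, all the assertions (isotropy, coisotropy, and $(V^\perp)^\perp=V$) only require the fiberwise linear-algebraic content, so reading everything pointwise in $\mathbb{E}_x$ circumvents this issue entirely.
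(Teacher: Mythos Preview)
Your proof is correct and follows essentially the same approach as the paper: both arguments work fiberwise from the adjunction identity $\langle\rho^*\alpha,a\rangle=\tfrac12\alpha(\rho(a))$ and reduce everything to the single inclusion $\im\rho^*\subseteq\ker\rho$ via the orthogonality relation $(\ker\rho)^\perp=\im\rho^*$. The only cosmetic difference is that you obtain this relation by first proving $(\im\rho^*)^\perp=\ker\rho$ and invoking biduality, whereas the paper shows $\im\rho^*_m\subseteq(\ker\rho_m)^\perp$ directly and closes the gap by a dimension count; these are equivalent moves.
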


\begin{proof}
See Appendix \ref{app:proofs}.
\end{proof}

We see that ante-Courant algebroid is a stricter notion than metric algebroid. 

\begin{example}
    For $\mathbb{E}=T\R$, any (positive-definite) pairing and anchor are determined by the choice of $\cg,f\in\cCi(\R)$ so that
    \begin{align*}
        \la\partial_x,\partial_x\ra&\coloneqq 2\e^\cg, & \rho(\partial_x)&\coloneq f\partial_x.
    \end{align*}
    By \emph{\ref{ca1}}, the bracket $[\,\,,\,]$ of a metric algebroid $(T\R,\la\,\,,\,\ra,\rho,[\,\,,\,])$ is determined by
    \begin{equation*}
        [\partial_x,\partial_x]=\frac{1}{2}\rho^*\dif\la \partial_x,\partial_x\ra=\rho^*(\cg'\e^\cg\dif x)=f\cg'\partial_x.
    \end{equation*}
    The second axiom \emph{\ref{ca2}} of a metric algebroid is always satisfied
    \begin{equation*}
        2\la[\partial_x,\partial_x],\partial_x\ra=\la\rho^*\dif\la\partial_x,\partial_x\ra,\partial_x\ra=\rho(\partial_x)\la\partial_x,\partial_x\ra.
    \end{equation*}
    On the other hand, we have $\rho(\rho^*\dif x)=f^2\e^{-\cg}\partial_x$, that is, by Lemma \ref{lem: ca-13}, a metric algebroid on $T\R$ is ante-Courant if and only if $\rho=0$. 
\end{example}

\subsection{Curved Courant algebroids}
We introduce an intermediate notion between pre and ante-Courant algebroids. This notion is more connected to the geometry of the underlying manifold and arises naturally in the context of Courant algebroid lifts, which we introduce in Section \ref{sec: Ca-lift}. 

\begin{definition}\label{def: curved-ca}
We call a metric algebroid $(\mathbb{E},\la\,\,,\,\ra,\rho,[\,\,,\,])$ a \textbf{curved Courant algebroid} if $F$ as in \eqref{eq: ante-curvature} satisfies:
\begin{enumerate}[(\text{Ca}1),leftmargin=2cm]
\setcounter{enumi}{8}
\item $F\in\Gamma(\wedge^2\mathbb{E}^*\otimes TM)$ and $F(a,b)=0$ for $a\in\ker\rho$.\label{ca9}
\end{enumerate}
We call $F$ the \textbf{curvature} of the curved Courant algebroid.
\end{definition}

\begin{remark}
    If the curvature of a curved Courant algebroid vanishes ($F=0$), the structure does not become a Courant algebroid in general, but it is pre-Courant.
\end{remark}

The relevance of curved Courant algebroids is made apparent when we characterize them in the \textit{regular} and \textit{transitive} cases, as the curvature is given by a torsion-like tensor field on the base manifold.

\begin{definition}
A tuple is called \textbf{regular} if the rank of $\rho$ is constant, and \textbf{transitive} if $\rho$ is surjective.
\end{definition}

\begin{proposition}\label{prop: reg-trans-curved-ca}
 A regular metric algebroid $(\mathbb{E},\la\,\,,\,\ra,\rho,[\,\,,\,])$ is a curved Courant algebroid if and only if there is $T\in\Omega^2(M,TM)$ such that $F(a,b)=T(\rho(a),\rho(b))$. If the curved Courant algebroid is, in addition, transitive, such $T$ is given uniquely.
\end{proposition}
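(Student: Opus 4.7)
My plan is to prove the two directions separately and then extract uniqueness in the transitive case.

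For the ``if'' direction, assume $F(a,b)=T(\rho(a),\rho(b))$ for some $T\in\Omega^2(M,TM)$. Then $C^\infty(M)$-bilinearity and skew-symmetry of $F$ follow immediately from those of $T$ (and $C^\infty(M)$-linearity of $\rho$), and if $a\in\ker\rho$ then $F(a,b)=T(0,\rho(b))=0$. So \ref{ca9} holds and regularity plays no role here.

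For the ``only if'' direction, the key step is to show that $F$ factors through $\rho$. By \ref{ca9}, $F$ is $C^\infty(M)$-bilinear, hence a bundle map $\wedge^2\mathbb{E}\to TM$, and it vanishes whenever either argument lies in $\ker\rho$ (by skew-symmetry). Regularity guarantees that $\rho$ has constant rank, so $\ker\rho\subseteq \mathbb{E}$ is a subbundle and $\mathbb{E}/\ker\rho$ is a vector bundle canonically isomorphic, via the map induced by $\rho$, to the subbundle $\im\rho\subseteq TM$. Therefore $F$ descends to a bundle map $\bar{F}\in\Gamma(\wedge^2(\im\rho)^*\otimes TM)$ satisfying $F(a,b)=\bar{F}(\rho(a),\rho(b))$. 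Since $\im\rho$ is a subbundle of $TM$, we may choose a complementary subbundle $C\subseteq TM$ so that $TM=\im\rho\oplus C$, and extend $\bar{F}$ by zero on the summands involving $C$ to obtain a global $T\in\Omega^2(M,TM)$ with $T(\rho(a),\rho(b))=\bar{F}(\rho(a),\rho(b))=F(a,b)$.

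For the transitive case, $\im\rho=TM$ and no extension is needed: $T$ is then determined by its values on pairs $(\rho(a),\rho(b))$, which exhaust $TM\times_M TM$, forcing uniqueness.

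The only delicate point is the descent of $F$ to $\wedge^2(\im\rho)$, which really does need both $C^\infty(M)$-bilinearity (so that $F$ is tensorial and defined pointwise) and the vanishing on $\ker\rho$ (so that it factors through the quotient); this is precisely why \ref{ca9} is phrased the way it is. In the non-transitive regular case non-uniqueness of $T$ is intrinsic, coming from the freedom in the choice of complement $C$ and of the extension to $\wedge^2 C$ and $\im\rho\otimes C$.
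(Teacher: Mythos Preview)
Your proof is correct and follows essentially the same strategy as the paper: descend $F$ to a section of $\wedge^2(\im\rho)^*\otimes TM$ and then extend to all of $TM$ via a complement $C$ of $\im\rho$. The only technical difference is that the paper realizes the descent by restricting $F$ to the complement $(\ker\rho)^\perp$ taken with respect to the pairing, whereas you pass directly to the quotient $\mathbb{E}/\ker\rho\cong\im\rho$; both yield the same $\bar F$, and your quotient argument is arguably the cleaner of the two since it does not invoke the pairing at all.
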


\begin{proof}
 If $F(a,b)=T(\rho(a),\rho(b))$ for some $T\in\Omega^2(M,TM)$, then \ref{ca9} follows immediately. Conversely, for a regular curved Courant algebroid $(\mathbb{E},\la\,\,,\,\ra,\rho,[\,\,,\,])$, we have that $\ker\rho\leq \mathbb{E}$ is a subbundle, so we can find a complement $L\leq \mathbb{E}$ such that $\mathbb{E}=\ker\rho\oplus L$. The anchor $\rho$ can be thus seen as a vector bundle isomorphism $L\rightarrow\im\rho$, hence we can identify $\rest{F}{L}$ with a section of $\wedge^2(\im\rho)^*\otimes TM$. The regularity of $\rho$ gives that $\im\rho\leq TM$ is a subbundle, so we can again find a complement $C\leq TM$ such that $TM=\im\rho\oplus C$. We then extend $F\in\Gamma(\wedge^2(\im\rho)^*\otimes TM$) to $T\in\Omega^2(M,TM)$ simply by asking $T(u,v)=0$ if $u\in C$. The result follows from the fact that $F(a,b)=0$ if $a\in\ker\rho$. The claim about the transitive case is straightforward.
\end{proof}

\begin{remark}\label{rk: transitive-curvature}
 In light of Proposition \ref{prop: reg-trans-curved-ca}, if a curved Courant algebroid is transitive, we refer to both $F\in\Gamma(\wedge^2\mathbb{E}^*\otimes TM)$ and $T\in\Omega^2(M,TM)$ as its curvature.
\end{remark}
 
We have an analogue of Lemma \ref{lem: ca-7-8} for curved Courant algebroids. 

\begin{lemma}\label{lem: ca-10-11}
 A metric algebroid $(\mathbb{E},\la\,\,,\,\ra,\rho,[\,\,,\,])$ is a curved Courant algebroid if and only if one of the following equivalent properties is satisfied
\emph{\begin{enumerate}[(\text{Ca}1),leftmargin=2cm]
\setcounter{enumi}{9}
\item $F\in\Gamma(\wedge^2\mathbb{E}^*\otimes TM)$ and $[\ker \rho,\im\rho^*]=0$.\label{ca10}
\item $F\in\Gamma(\wedge^2\mathbb{E}^*\otimes TM)$ and $[\im\rho^*,\ker \rho]=0$.\label{ca11}
\end{enumerate}}
\end{lemma}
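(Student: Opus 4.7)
The plan is to exploit the fact, already established in Lemma \ref{lem: ca-13}, that the tensoriality of $F$ is equivalent to $\rho\circ\rho^*=0$, together with the identity $\ker\rho = (\im\rho^*)^\perp$, which always holds because $\langle a,\rho^*\alpha\rangle = \tfrac12\alpha(\rho(a))$ by definition of $\rho^*$ and because the pairing is non-degenerate. Under the tensoriality assumption, I would translate the condition $F(a,b)=0$ for $a\in\ker\rho$ into a pairing identity via axiom \ref{ca2}, which for $a\in\ker\rho$ becomes the antisymmetry $\langle[a,b],c\rangle = -\langle b,[a,c]\rangle$.

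First I would dispose of the equivalence between \ref{ca10} and \ref{ca11}. Polarising \ref{ca1} yields $[a,b]+[b,a] = 2\rho^*\dif\langle a,b\rangle$; taking $a\in\ker\rho$ and $b=\rho^*\beta$ the pairing $\langle a,\rho^*\beta\rangle$ vanishes, so $[a,\rho^*\beta]=-[\rho^*\beta,a]$, making the two bracket-vanishing statements equivalent.

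Next, under the tensorial hypothesis, I would prove the equivalence of \ref{ca9} with \ref{ca10}. For \ref{ca10} $\Rightarrow$ \ref{ca9}: given $a\in\ker\rho$ and arbitrary $b$, axiom \ref{ca2} gives $\langle[a,b],\rho^*\alpha\rangle = -\langle b,[a,\rho^*\alpha]\rangle$, which vanishes by \ref{ca10}, so $[a,b]\in(\im\rho^*)^\perp = \ker\rho$ and hence $F(a,b)=\rho([a,b])=0$. For the converse, given $a\in\ker\rho$, to show $[a,\rho^*\alpha]=0$ it suffices by non-degeneracy to check the pairing against an arbitrary $c\in\Gamma(\mathbb{E})$; applying \ref{ca2} once more,
\begin{equation*}
\langle[a,\rho^*\alpha],c\rangle \;=\; -\langle\rho^*\alpha,[a,c]\rangle \;=\; -\tfrac12\alpha(\rho([a,c])) \;=\; -\tfrac12\alpha(F(a,c)),
\end{equation*}
which is zero by \ref{ca9}.

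The main obstacle I would anticipate is the temptation to argue the converse directly: from $F(a,\rho^*\alpha)=0$ one obtains only $\rho([a,\rho^*\alpha])=0$, i.e.\ $[a,\rho^*\alpha]\in\ker\rho$, which is strictly weaker than $[a,\rho^*\alpha]=0$. The detour through \ref{ca2} paired against an \emph{arbitrary} section $c$ is therefore essential, and it is only thanks to $\rho\circ\rho^*=0$ (hence $\langle a,\rho^*\alpha\rangle=0$ for $a\in\ker\rho$) that the inhomogeneous terms in \ref{ca5} and in the polarised \ref{ca1} drop out and the calculation becomes clean.
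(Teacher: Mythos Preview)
Your proof is correct and is essentially the paper's argument: the paper simply invokes Lemma~\ref{lem: technical}, whose two identities, specialised to $a\in\ker\rho$, reduce precisely to your \ref{ca2}-computation $\langle c,[a,\rho^*\alpha]\rangle = -\tfrac12\alpha(F(a,c))$ and your polarised-\ref{ca1} observation $[\rho^*\alpha,a]=-[a,\rho^*\alpha]$. One minor remark: neither Lemma~\ref{lem: ca-13} nor \ref{ca5} is actually used in your main steps, since $\langle a,\rho^*\alpha\rangle=\tfrac12\alpha(\rho(a))=0$ for $a\in\ker\rho$ is immediate from the definition of $\rho^*$, independently of $\rho\circ\rho^*=0$.
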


\begin{proof}
See Appendix \ref{app:proofs}.
\end{proof}

In the regular case we can phrase Lemma \ref{lem: ca-10-11} differently.

\begin{lemma}
\label{lem: reg-ca-10-11}
 A regular metric algebroid $(\mathbb{E},\la\,\,,\,\ra,\rho,[\,\,,\,])$ is a curved Courant algebroid if and only if one of the following equivalent statements is true
\begin{itemize}
\item there is $T\in\Omega^2(M,TM)$ such that $[a,\rho^*\alpha]=\rho^*(L_{\rho(a)}-(\iota_{\rho(a)}T)^t)\alpha$.
\item there is $T\in\Omega^2(M,TM)$ such that $[\rho^*\alpha,a]=\rho^*(-\iota_{\rho(a)}\circ\dif+(\iota_{\rho(a)}T)^t)\alpha$.
\end{itemize}
\end{lemma}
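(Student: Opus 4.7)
My plan is to derive a universal identity in any metric algebroid that expresses $[a,\rho^*\alpha]$ modulo $\rho^*$ in terms of the anchor defect $F$, and then use regularity together with Proposition \ref{prop: reg-trans-curved-ca} to match it against the two stated formulas.

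First I would establish the identity
\[
\langle b,\,[a,\rho^*\alpha] - \rho^* L_{\rho(a)}\alpha\rangle \;=\; -\tfrac{1}{2}\alpha\bigl(F(a,b)\bigr)
\]
for arbitrary $a,b\in\Gamma(\mathbb{E})$ and $\alpha\in\Omega^1(M)$. This follows by applying \ref{ca2} with $c=\rho^*\alpha$, noting that $\langle b,\rho^*\alpha\rangle=\tfrac{1}{2}\alpha(\rho(b))$ and expanding $\rho([a,b])=[\rho(a),\rho(b)]_{\text{Lie}}+F(a,b)$; the definition $L_X\alpha(Y) = X(\alpha(Y))-\alpha([X,Y]_{\text{Lie}})$ isolates the $\rho^*L_{\rho(a)}\alpha$ contribution. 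This is essentially the computation underlying Lemma \ref{lem: ca-7-8} (the case $F=0$), now tracked with the $F$-correction; it requires no regularity.

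For the first bullet, I would unpack $\tr(\alpha\otimes\iota_{\rho(a)}T)$ as the $1$-form $Y\mapsto\alpha(T(\rho(a),Y))$, so that $\langle b,\rho^*(\tr(\alpha\otimes\iota_{\rho(a)}T))\rangle=\tfrac{1}{2}\alpha(T(\rho(a),\rho(b)))$. Combining this with the universal identity and invoking non-degeneracy of $\langle\,\,,\,\rangle$, the first bullet becomes equivalent to
\[
\alpha(F(a,b)) = \alpha(T(\rho(a),\rho(b))) \quad\text{for all } a,b,\alpha,
\]
i.e.\ to $F(a,b)=T(\rho(a),\rho(b))$. By Proposition \ref{prop: reg-trans-curved-ca}, this is precisely the characterization of a regular curved Courant algebroid.

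The second bullet I would derive from the first via the polarized form of \ref{ca1}, $[a,b]+[b,a]=2\rho^*\dif\langle a,b\rangle$, applied with $b=\rho^*\alpha$, which yields $[\rho^*\alpha,a]=-[a,\rho^*\alpha]+\rho^*\dif(\alpha(\rho(a)))$. Substituting the first-bullet formula and applying Cartan's magic formula $L_{\rho(a)}\alpha=\iota_{\rho(a)}\dif\alpha+\dif(\alpha(\rho(a)))$, the exact pieces cancel and the second bullet emerges; the converse implication is symmetric. The only subtle point is settling the convention for $\tr(\alpha\otimes\iota_{\rho(a)}T)$, after which everything is routine bookkeeping and I do not anticipate any genuine obstacle.
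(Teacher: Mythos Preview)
Your proposal is correct and follows essentially the same route as the paper: the universal identity you establish is precisely the content of the paper's Lemma~\ref{lem: technical} (both parts---the pairing identity with the $F$-correction and the polarized \ref{ca1} relation $[\rho^*\alpha,a]=-[a,\rho^*\alpha]+\rho^*\dif(\alpha(\rho(a)))$), and the reduction to $F(a,b)=T(\rho(a),\rho(b))$ via Proposition~\ref{prop: reg-trans-curved-ca} is exactly what the paper does. The only cosmetic difference is that the paper cites Lemma~\ref{lem: technical} as a black box while you reproduce its derivation inline.
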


\begin{proof}
 See Appendix \ref{app:proofs}.
\end{proof}

In summary, we have the hierarchy of structures weakening the notion of Courant algebroid, but still staying metric algebroids.

\begin{corollary}\label{cor: inclusions}
There is the sequence of inclusions:
\small{\begin{equation*}
\left\{\begin{array}{c}
\text{Courant}\\
 \text{algebroids}\\
 \text{\footnotesize{\emph{\ref{ca3}}}}\end{array}\right\}\subseteq\left\{\begin{array}{c}
\text{pre-Courant}\\
 \text{algebroids}\\
 \text{\footnotesize{\emph{\ref{ca6}}}}\end{array}\right\}\subseteq\left\{\begin{array}{c}
\text{curved Courant}\\
 \text{algebroids}\\
 \text{\footnotesize{\emph{\ref{ca9}}}}\end{array}\right\}\subseteq\left\{\begin{array}{c}
\text{ante-Courant}\\
 \text{algebroids}\\
 \text{ \footnotesize{\emph{\ref{ca12}}}}\end{array}\right\}.
\end{equation*}}
Namely, every Courant algebroid possesses all properties \emph{\ref{ca1}} – \emph{\ref{ca13}}.
\end{corollary}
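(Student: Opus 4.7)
The proof plan is to verify each of the three inclusions in turn; all three are essentially immediate from results stated earlier in the excerpt, so the corollary is more of a summary than a substantial theorem.

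For the first inclusion, let $(\mathbb{E},\la\,\,,\,\ra,\rho,[\,\,,\,])$ be a Courant algebroid. By Lemma \ref{lem: ca-4-5}, since it is in particular a metric algebroid, it satisfies \ref{ca4}. Combined with the Jacobi-like identity \ref{ca3}, the triple $(\mathbb{E},\rho,[\,\,,\,])$ is a Leibniz algebroid. By the cited result of \cite{BarLeib}, the anchor of any Leibniz algebroid is automatically a morphism from $(\Gamma(\mathbb{E}),[\,\,,\,])$ to $(\mathfrak{X}(M),[\,\,,\,]_{\mathrm{Lie}})$, which is precisely \ref{ca6}. Hence the Courant algebroid is pre-Courant.

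For the second inclusion, a pre-Courant algebroid satisfies $F\equiv 0$, and the zero tensor obviously belongs to $\Gamma(\wedge^2\mathbb{E}^*\otimes TM)$ and vanishes on $\ker\rho$, so \ref{ca9} holds and the algebroid is curved Courant. For the third, \ref{ca9} is by definition \ref{ca12} together with an extra vanishing condition on $\ker\rho$, so every curved Courant algebroid is a fortiori ante-Courant.

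Finally, the closing sentence, that a Courant algebroid possesses every property \ref{ca1}--\ref{ca13}, follows by collecting what has already been proved. Properties \ref{ca1}--\ref{ca3} are the definition, \ref{ca4}--\ref{ca5} come from Lemma \ref{lem: ca-4-5}, \ref{ca6} has just been established, \ref{ca7}--\ref{ca8} follow from \ref{ca6} by Lemma \ref{lem: ca-7-8}, \ref{ca9}--\ref{ca11} follow from $F=0$ together with Lemma \ref{lem: ca-10-11} (since $F\equiv 0$ trivially has the stated tensorial form and the brackets $[\ker\rho,\im\rho^*]$ and $[\im\rho^*,\ker\rho]$ vanish as a consequence of \ref{ca7}--\ref{ca8} applied in $\ker\rho$), \ref{ca12} is a weakening of \ref{ca9}, and \ref{ca13} is equivalent to \ref{ca12} by Lemma \ref{lem: ca-13}. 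There is no real obstacle; the only mild subtlety is making sure that \ref{ca10}--\ref{ca11} are recognized as consequences of $F\equiv 0$ plus \ref{ca7}--\ref{ca8}, which is immediate.
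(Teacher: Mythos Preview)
Your proposal is correct and matches the paper's treatment: the corollary is stated without proof, as an immediate summary of the preceding results (Lemmas \ref{lem: ca-4-5}, \ref{lem: ca-7-8}, \ref{lem: ca-13}, \ref{lem: ca-10-11} and the Leibniz algebroid observation from \cite{BarLeib}), and your write-up simply spells out this chain of implications in detail.
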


Figure \ref{CApropfig} depicts the hierarchy of Courant algebroid properties. In particular, it shows schematically how to derive the dependent properties from the three original Courant algebroid axioms, which are shown vertically.

\begin{remark}
\sloppy Other notions can be understood within this framework, like \textit{\mbox{pre-DFT} algebroids} \cite{ChatanteCA}, which are tuples satisfying only \ref{ca2}.
\end{remark}

\begin{figure}[ht]
\begin{center}
\resizebox{0.9\textwidth}{!}{\begin{tikzpicture}
 \node (1) at (8.5, 0) {\textcolor{uab}{\ref{ca1}}};
 \node (2) at (8.5, -3.5) {\textcolor{uab}{\ref{ca2}}};
 \node (3) at (8.5, -7) {\ref{ca3}};
 \node (4) at (11, -5.25) {\ref{ca4}};
 \node (5) at (11, -1.75) {\ref{ca5}};
 \node (6) at (6.5, -5.25){\ref{ca6}};
 \node (7) at (6.5, -3.5) {\ref{ca7}};
 \node (8) at (6.5, -1.75) {\ref{ca8}};
 \node (9) at (3.5, -5.25) {\ref{ca9}};
 \node (10) at (3.5, -3.5) {\ref{ca10}};
 \node (11) at (3.5, -1.75) {\ref{ca11}};
 \node (12) at (0.5, -5.25) {\ref{ca12}};
 \node (13) at (0.5, 0){\ref{ca13}};
 \node(14) at (8.5, -7.45) {\textcolor{cvut}{C}ourant \textcolor{cvut}{a}lgebroid};
 \node(15) at (6.5, -5.7) {\textcolor{cvut}{pre-Ca}};
 \node(16) at (3.5, -5.7) {\textcolor{cvut}{curved Ca}};
 \node(17) at (0.5, -5.7) {\textcolor{cvut}{ante-Ca}};

 \draw [->] (2) -- (4);

 \draw [->] (1) -- (5);
 \draw (4) to [out=90,in=153.43] (9.75, -0.875);

 \draw[->] (4) -- (6);
 \draw (3) to [out=90,in=0] (7.5, -5.25);

 \draw[->] (6) -- (9);

 \draw[implies-implies,double equal sign distance] (6) -- (7);
 \draw[->] (2) to [out=200, in=0] (6.6,-4.375);
 
 \draw[implies-implies,double equal sign distance] (7) -- (8);
 \draw[->] (1) to [out=225, in=0] (6.6,-2.625);

 \draw[implies-implies,double equal sign distance] (9) -- (10);
 \draw (2) to (7.45,-3.5);
 \draw[->] (5.55,-3.5) to [out=180,in=0] (3.6,-4.375);
 \draw[dotted, thick] (7.4,-3.5) to (7);
 \draw[dotted, thick] (5.6,-3.5) to (7);

 \draw[implies-implies,double equal sign distance] (10) -- (11);
 \draw[->] (1) to [out=190,in=0] (3.6,-2.625);
 
 \draw[->] (9) -- (12);

 \draw[implies-implies,double equal sign distance] (12) -- (13);
 \draw[->] (1) to [out=180, in=0] (0.6,-1.25);
\draw (4) to [out=60,in=270] (11.5, 0);
\draw (11.5, 0) to [out=90,in=0] (8.5, 2);
\draw (8.5, 2) to [out=180,in=90] (5.5, 0);
\draw (5.5, 0) to [out=270,in=12.5] (5.25, -0.3425);

 \draw[Plum, dashed, thick] (-4,-4.5) -- (12,-4.55);
 \draw[Plum, dashed, thick] (-4,-6.2) -- (12,-6.2);
 \draw[Orange, dashed, thick] (-4,-6.3) -- (12,-6.3);
 \draw[OrangeRed, dashed, thick] (0.5,0) circle (0.8);
 
\node[text width=3.3cm, text centered, uab] at (8.5,1) {Metric algebroid\\ axioms};
 \node[text width=4.5cm, Orange] at (-1.75,-7) {involving \\only $[\,\,,\,]$};
 \node[text width=4.5cm, Plum] at (-1.75,-5.25) {involving \\only $\rho$ and $[\,\,,\,]$};
 \node[text width=4.5cm, OrangeRed] at (-1.75,0) {involving \\only $\la\,\,,\,\ra$ and $\rho$};
 \node[text width=4.5cm] at (-1.75,-2.625) {involving\\ $\la\,\,,\,\ra$, $\rho$, and $[\,\,,\,]$};
 
\end{tikzpicture}}
\end{center}
\caption{Hierarchy of Courant algebroid properties.}\label{CApropfig}
\end{figure}
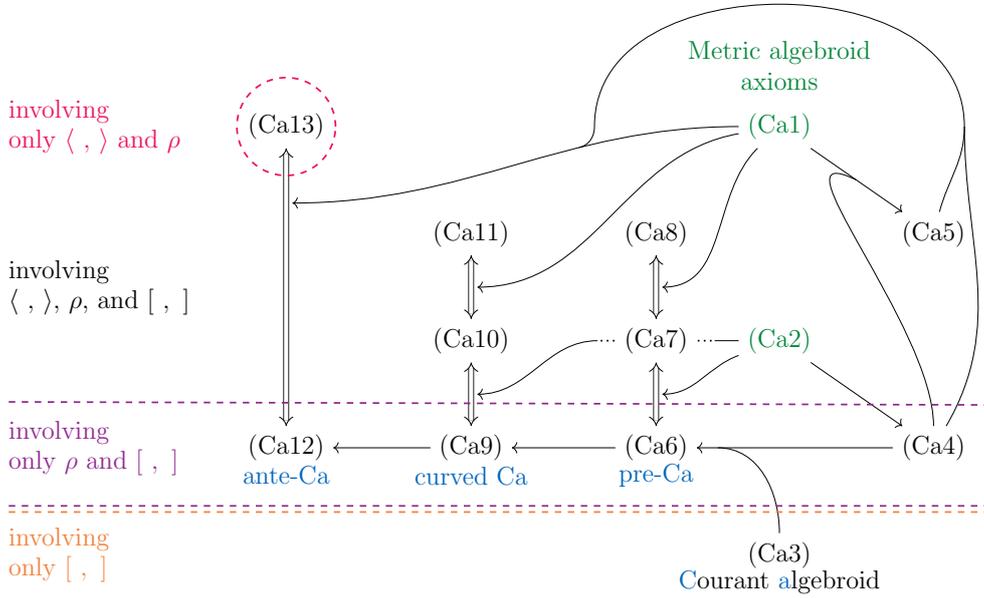


\section{Exact curved Courant algebroids}\label{sec: exact-curved}
In this section, we introduce \textit{curved Cartan calculus} on $\Omega^\bullet(M)$ and use it to classify exact curved Courant algebroids and reveal their relation to \textit{curved differential graded Lie algebras}.

\subsection{Curved Cartan calculus} \sloppy Graded derivations of the exterior algebra $(\Omega^\bullet(M),\wedge)$, that is, $D\in\en(\Omega^\bullet(M))$ such that, for $\psi,\varphi\in\Omega^\bullet(M)$,
\begin{equation*}
 D(\psi\wedge\varphi)=D\psi\wedge\varphi+(-1)^{|\psi||D|}\psi\wedge D\varphi
\end{equation*}
form a graded Lie subalgebra of $\en(\Omega^\bullet(M)$ with respect to the graded commutator $[\,\,,\,]_\text{g}$, which we will denote by $\gder(M)$. The classical Cartan calculus on $\Omega^\bullet(M)$ is described by the relations between distinguished elements of $\gder(M)$:
\begin{equation}\label{eq: Cartan-calculus}
 \begin{aligned}
 [\iota_X,\iota_Y]_\text{g}&=0, &&&&& [\iota_X,\dif]_\text{g}&=L_X, &&&&& [\dif, L_X]_\text{g}&=0,\\
 [\dif,\dif]_\text{g}&=0, &&&&& [L_X,\iota_Y]_\text{g}&=\iota_{[X,Y]_\text{Lie}}, & &&&&[L_X,L_Y]_\text{g}&=L_{[X,Y]_\text{Lie}}.
 \end{aligned}
\end{equation}
The exterior derivative $\dif$ is the unique element of $\gder_1(M)$ such that $(\dif f)(X)=Xf$ and $\dif\circ \dif=\frac{1}{2}[\dif,\dif]_\text{g}=0$. Alternatively, $\dif$ is given by the explicit formula using an auxiliary torsion-free connection $\nabla$ on $M$: for $\psi\in\Omega^r(M)$,
\begin{equation}\label{eq: dif-con}
 \dif\psi=(r+1)\ske (\nabla\psi).
\end{equation} 

If we omit the torsion-freeness of $\nabla$ in \eqref{eq: dif-con}, we still get a well-defined operator in $\gder_1(M)$ coinciding with $\dif$ on functions. Given two connections $\nabla$ and $\nabla'$, the corresponding operators are the same if and only if they coincide on $1$-forms, that~is,
\begin{align*}
 0&=2\ske(\nabla\alpha-\nabla'\alpha)(X,Y)=\alpha(-\nabla_XY+\nabla_YX+\nabla'_XY-\nabla'_YX)\\
 &=-\alpha(T\nb(X,Y)-T_{\scalebox{0.45}{$\nabla'$}}(X,Y)),
\end{align*}
where $T\nb,T_{\scalebox{0.45}{$\nabla'$}}\in\Omega^2(M,TM)$ stand for the torsions of $\nabla$ and $\nabla'$. Therefore, the operators coincide if and only if their underlying connections have the same torsion. As for a given $T\in\Omega^2(M,TM)$ we can always find a connection with torsion $T$, the following definition is justified.

\begin{definition}\label{def: curved-d}
Let $T\in\Omega^2(M, TM)$ and $\nabla$ be any connection on $M$ with torsion $T$. We introduce the \textbf{curved exterior derivative} $\dif^{\s{$T$}}$ as the unique element of $\gder_1(M)$ such that $\dif^{\s{$T$}}f=\dif f$ and $\dif^{\s{$T$}}\alpha=2\ske(\nabla\alpha)$ for $f\in\cCi(M)$ and $\alpha\in\Omega^1(M)$.
\end{definition}

Explicitly, we can express the curved exterior derivative $\dif^{\s{$T$}}$ without referring to an auxiliary connection as
\begin{equation}\label{eq: curved-d}
 \dif^{\s{$T$}}=\dif-\iota_T,
\end{equation}
where $\iota_\varphi$, for $\varphi\in\Omega^k(M,TM)$, is a generalization of the contraction operator (see e.g. \cite{MichRFN}) given as the unique element of $\gder_{k-1}(M)$ such that
\begin{align*}
    \iota_\varphi f&=0, & (\iota_\varphi\alpha)(X_1\varlist X_k)=\alpha(\varphi(X_1\varlist X_k)).
\end{align*}

Indeed, we have that $\dif^{\s{$T$}}$ and $\dif-\iota_T$ coincide on the functions and, moreover, for an auxiliary connection $\nabla$ with torsion $T$, we get
\begin{align*}
 (\dif^{\s{$T$}}\alpha)(X,Y)&=(\nabla_X\alpha)(Y)-(\nabla_Y\alpha)(X)=X\alpha(Y)-Y\alpha(X)-\alpha(\nabla_XY-\nabla_YX)\\
 &=X\alpha(Y)-Y\alpha(X)-\alpha([X,Y]_\text{Lie})-\alpha(T(X,Y))\\
 &=((\dif-\iota_T)\alpha)(X,Y).
\end{align*}

There is an elegant characterization of curved exterior derivatives. Consider an arbitrary $D\in\gder_1(M)$ such that $Df=\dif f$, hence
\begin{equation*}
 (\dif-D)(f\alpha)=f(\dif-D)(\alpha),
\end{equation*}
that is, $\dif-D\in\gder_1(M)$ is uniquely determined by a $\cCi(M)$-linear map $\Omega^1(M)\rightarrow\Omega^2(M)$, which is, by the contraction, canonically identified with an element in $\Omega^2(M,TM)$. Therefore, $D=\dif^{\s{$T$}}$ for some $T\in\Omega^2(M,TM)$ and we have the one-to-one correspondence:
\begin{equation*}
\Omega^2(M,TM) \cong\{\text{$D\in\gder_1(M)$ such that $Df=\dif f$}
 \},
\end{equation*}
where $0\in \Omega^2(M,TM)$ is sent to $\dif$. In particular, $\dif^{\s{$T$}}\circ \dif^{\s{$T$}}= 0$ if and only if $T= 0$.

Given an operator $\dif^{\s{$T$}}$, we introduce the \textbf{curved Lie derivative} by
\begin{equation}\label{eq: Lie-der}
L^{\s{$T$}}_X\coloneqq[\iota_X,\dif^{\s{$T$}}]_\text{g}=\iota_X\circ\dif^{\s{$T$}}+\dif^{\s{$T$}}\circ\iota_X
\end{equation}
and, for any connection $\nabla$ with torsion $T$, the \textbf{curved bracket}
\begin{equation*}
[X,Y]^{\s{$T$}}\coloneqq\nabla_XY-\nabla_YX=T(X,Y)+[X,Y]_\text{Lie}. 
\end{equation*}
It is easy to check that it can be derived from $\dif^{\s{$T$}}$ by
\begin{equation}\label{eq: bracket}
 \iota_{[X,Y]^{\s{$T$}}}=[[\iota_X,\dif^{\s{$T$}}]_\text{g},\iota_Y]_\text{g}=[L_X^{\s{$T$}},\iota_Y]_\text{g}.
\end{equation}
We shall see that every $T\in\Omega^2(M,TM)$ endows $(\en(\Omega^\bullet(M)),[\,\,,\,]_\text{g})$ with the structure of a \textit{curved differential graded Lie algebra}, for which we extend the definition in \cite{MauCDGLA}.

\begin{definition}
 A \textbf{curved differential graded Lie algebra} is a tuple $(\mathfrak{g},[\,\,,\,],D,\mathcal{R})$ consisting of a $\mathbb{Z}$-graded Lie algebra $(\mathfrak{g},[\,\,,\,])$, a \textbf{differential}: \mbox{$D\in\gder_\text{odd}(\mathfrak{g},[\,\,,\,])$}, and a \textbf{curvature}: $\mathcal{R}\in\mathfrak{g}_\text{ev}$, such that, for $u\in\mathfrak{g}$,
 \begin{align}\label{eq: cdgLa}
 D\mathcal{R}&=0, & &\text{and} & &D(D u)=[\mathcal{R},u].
 \end{align}
\end{definition}

\begin{remark}
 By the graded Jacobi identity, for every $v\in \mathfrak{g}_\text{odd}$, we have that $D\coloneqq[v,\,]\in\gder_\text{odd}(\mathfrak{g},[\,\,,\,])$. In this case, \eqref{eq: cdgLa} takes the following form:
 \begin{align*}
 [v,\mathcal{R}]&=0, & &\text{and} & &[v,[v,u]]=\frac{1}{2}[[v,v],u]=[\mathcal{R},u].
 \end{align*}
 Therefore, the curvature can always be chosen as $\mathcal{R}=\frac{1}{2}[v,v]$.
\end{remark}

Thus, given $T\in\Omega^2(M,TM)$, the structure of a curved differential graded Lie algebra on $(\en(\Omega^\bullet (M)),[\,\,,\,]_\text{g})$ is given by the differential $[\dif^{\s{$T$}},\,]_\text{g}$ and the curvature $\mathcal{R}\coloneqq\frac{1}{2}[\dif^{\s{$T$}},\dif^{\s{$T$}}]_\text{g}=\dif^{\s{$T$}}\circ\dif^{\s{$T$}}\in\gder_2(M)$. Explicitly, using \eqref{eq: curved-d}, we find that
 \begin{equation}\label{eq: curvature}
 \mathcal{R}=-\iota_{\jac^{\s{$T$}}}-[\dif,\iota_{T}]_\text{g},
 \end{equation}
 where $\jac^{\s{$T$}}(X,Y,Z)\coloneqq T(X,T(Y,Z))-T(T(X,Y),Z)-T(Y,T(X,Z))$.

By \eqref{eq: Lie-der} and \eqref{eq: bracket} and the graded Jacobi identity for $[\,\,,\,]_\text{g}$, we arrive to a generalization of the classical Cartan calculus \eqref{eq: Cartan-calculus}:
\begin{equation}\label{eq: curved-Cartan-calculus}
 \begin{aligned}
 [\iota_X,\iota_Y]_\text{g}&=0, &[\iota_X,\dif^{\s{$T$}}]_\text{g}&=L^{\s{$T$}}_X, & [\dif^{\s{$T$}}, L^{\s{$T$}}_X]_\text{g}&=[\mathcal{R},\iota_X]_\text{g},\\
 [\dif^{\s{$T$}},\dif^{\s{$T$}}]_\text{g}&=2\mathcal{R}, & [L^{\s{$T$}}_X,\iota_Y]_\text{g}&=\iota_{[X,Y]^{\s{$T$}}}, & [L^{\s{$T$}}_X,L^{\s{$T$}}_Y]_\text{g}&=L_{[X,Y]^{\s{$T$}}}+[[\iota_X,\mathcal{R}]_\text{g},\iota_Y]_\text{g}.
 \end{aligned}
\end{equation}

\subsection{Classification of exact curved Courant algebroids} We call a metric algebroid \textbf{exact} if the following sequence is exact:
\begin{equation}\label{eq: ex-seq}
\begin{tikzcd}[column sep=2em]
0\arrow[r]&{T^*M}\arrow{r}{\rho^*}&\mathbb{E}\arrow{r}{\rho}& TM\arrow[r]&0.
\end{tikzcd}
\end{equation}
The Courant algebroid from Example \ref{ex: fund-Ca} is exact, as $\rho^*\colon T^*M\rightarrow \mathbb{T}M$ is the inclusion and $\rho\colon \mathbb{T}M\rightarrow TM$ is the projection. This is still the case if we replace the Dorfman bracket with the $H$-\textbf{twisted Dorfman bracket}
\begin{equation*}
[X+\alpha,Y+\beta]_{\s{$H$}}\coloneqq [X+\alpha,Y+\beta]+\iota_Y\iota_XH
\end{equation*}
for some closed $3$-form $H\in\Omega^3(M)$. In fact, $H^3_\text{dR}(M)$ classifies exact Courant algebroids up to isomorphism \cite{SevL}. We define this isomorphism notion in our generality, so that it applies to all Courant-like structures in Section \ref{sec: prop-Ca}.
\begin{definition}\label{def: iso}
 Given two metric algebroids $(\mathbb{E},\la\,\,,\,\ra,\rho,[\,\,,\,])$ and $(\mathbb{E}',\la\,\,,\,\ra',\rho',[\,\,,\,]')$ over $M$, a vector bundle isomorphism $\phi\colon \mathbb{E}\rightarrow \mathbb{E}'$ over the identity is called a (metric algebroid) \textbf{isomorphism} if
\begin{align*}
&\langle \phi a,\phi b\rangle'=\langle a,b\rangle, & &\rho'\circ\phi=\rho, & &[\phi a,\phi b]'=\phi[a,b].
\end{align*}
\end{definition}

We see how curved Cartan calculus provides examples of curved Courant algebroids. For $T\in\Omega^2(M,TM)$ and $H\in\Omega^3(M)$, we define the bracket $[\,\,,\,]^{\s{$T$}}_{\s{$H$}}$ by
 \begin{equation*}
[X+\alpha,Y+\beta]^{\s{$T$}}_{\s{$H$}}\coloneqq [X,Y]^{\s{$T$}}+L^{\s{$T$}}_X\beta-\iota_Y\dif^{\s{$T$}}\alpha+\iota_Y\iota_XH.
 \end{equation*}
Note that if $T=0$, the bracket $[\,\,,\,]^{\s{$T$}}_{\s{$H$}}$ becomes the $H$-twisted Dorfman bracket. On the other hand, for arbitrary $T$, $H$, and $\rho\coloneqq\pr_{TM}$, we still have 
 \begin{equation*}
 [X+\alpha,X+\alpha]^{\s{$T$}}_{\s{$H$}}=L^{\s{$T$}}_X\alpha-\iota_X\dif^{\s{$T$}}\alpha=\dif^{\s{$T$}}\iota_X\alpha=\dif\alpha(X)=\frac{1}{2}\rho^*\dif\la X+\alpha,X+\alpha\ra_+,
 \end{equation*}
 and polarization yields that $(\mathbb{T}M,\la\,\,,\,\ra_+,\pr_{TM},[\,\,,\,]^{\s{$T$}}_{\s{$H$}})$ satisfies \ref{ca1}. Moreover,
 \begin{align*}
 2\la[X+\alpha,Y+\beta]_{\s{$H$}}^{\s{$T$}},Y+\beta\ra_+&=2((L^{\s{$T$}}_X\beta)(Y)-(\dif^{\s{$T$}}\alpha)(Y,Y)+\beta([X,Y]^{\s{$T$}}))=2X\beta(Y)\\
 &=\rho(X+\alpha)\la Y+\beta,Y+\beta\ra_+,
 \end{align*}
 hence, by polarization, \ref{ca2} is satisfied. So, $(\mathbb{T}M,\la\,\,,\,\ra_+,\pr_{TM},[\,\,,\,]^{\s{$T$}}_{\s{$H$}})$ is a metric algebroid. Using the notation of \eqref{eq: ante-curvature} and since $\ker\rho=T^*M$, we see that
 \begin{equation*}
 F(X+\alpha,Y+\beta)=[X,Y]^{\s{$T$}}-[X,Y]_\text{Lie}=T(X,Y).
 \end{equation*}
 Therefore, the tuple $(\mathbb{T}M,\la\,\,,\,\ra_+,\pr_{TM},[\,\,,\,]^{\s{$T$}}_{\s{$H$}})$ is an exact curved Courant algebroid.

 \begin{remark}
 Note that an exact curved Courant algebroid is, in particular, transitive. Following Remark \ref{rk: transitive-curvature}, the curvature of $(\mathbb{T}M,\la\,\,,\,\ra_+,\pr_{TM},[\,\,,\,]^{\s{$T$}}_{\s{$H$}})$ is $T$, that is, the torsion of an underlying connection.
 \end{remark}

 We proceed with the classification of exact curved Courant algebroids.

\begin{proposition}\label{prop: curved-class}
Every exact curved Courant algebroid over $M$ is isomorphic to $(\mathbb{T}M,\la\,\,,\,\ra_+,\pr_{TM},[\,\,,\,]_{\s{$H$}}^{\s{$T$}})$ for some $T\in\Omega^2(M,TM)$ and $H\in\Omega^3(M)$.
\end{proposition}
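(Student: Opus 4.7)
My plan is to adapt the classical Ševera-type classification to the curved setting. Let $(\mathbb{E},\la\,\,,\,\ra,\rho,[\,\,,\,])$ be an exact curved Courant algebroid. Since exactness implies transitivity, Proposition \ref{prop: reg-trans-curved-ca} supplies a unique $T \in \Omega^2(M, TM)$ with $F(a, b) = T(\rho(a), \rho(b))$. Because any curved Courant algebroid satisfies \ref{ca13} (Corollary \ref{cor: inclusions}), $\im\rho^*$ is isotropic, so starting from any linear splitting of \eqref{eq: ex-seq} and correcting by the $1$-form encoding its self-pairing yields an isotropic splitting $s\colon TM \to \mathbb{E}$ of $\rho$, exactly as in the flat case \cite{SevL}. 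Via $X + \alpha \mapsto s(X) + \rho^*\alpha$ one identifies $\mathbb{E} \cong \mathbb{T}M$ as a vector bundle; from $\la \rho^*\alpha, b\ra = \tfrac{1}{2}\alpha(\rho(b))$ and the isotropy of both $s(TM)$ and $\im\rho^*$, the pairing transports to $\la\,\,,\,\ra_+$ and the anchor to $\pr_{TM}$.

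Next, I would compute the transported bracket piece by piece. Lemma \ref{lem: reg-ca-10-11} with $\rho(\rho^*\beta)=0$ gives $[\rho^*\alpha,\rho^*\beta]=0$, and the same lemma combined with the identity $L^{\s{$T$}}_X\alpha = L_X\alpha - \iota_X\iota_T\alpha$ on $1$-forms (immediate from $\dif^{\s{$T$}} = \dif - \iota_T$) yields $[s(X), \rho^*\beta] = \rho^*(L^{\s{$T$}}_X\beta)$ and $[\rho^*\alpha, s(Y)] = -\rho^*(\iota_Y \dif^{\s{$T$}}\alpha)$. The transitive curvature formula gives $\rho([s(X), s(Y)]) = [X, Y]_\text{Lie}+T(X,Y)=[X, Y]^{\s{$T$}}$, so exactness yields $[s(X), s(Y)] = s([X, Y]^{\s{$T$}}) + \rho^*(H_s(X, Y))$ for some $\R$-bilinear $H_s$. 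I then define $H(X, Y, Z) \coloneqq 2\la[s(X), s(Y)], s(Z)\ra = H_s(X,Y)(Z)$ and show $H\in \Omega^3(M)$: axiom \ref{ca1} with $\la s(X), s(X)\ra = 0$ and $[X, X]^{\s{$T$}} = 0$ forces antisymmetry in the first two slots, polarizing \ref{ca2} on $\la s(Y), s(Z)\ra = 0$ forces antisymmetry in the last two (hence total antisymmetry), and $\cCi(M)$-linearity in the first slot follows from \ref{ca5}---whose $2\la a, b\ra \rho^*\dif f$ term vanishes by isotropy---paired with the curved Leibniz rule $[fX, Y]^{\s{$T$}} = f[X, Y]^{\s{$T$}} - (Yf)X$. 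Assembling the four pieces yields exactly the bracket $[\,\,,\,]^{\s{$T$}}_{\s{$H$}}$.

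The main technical obstacle will be the tensoriality of $H$ in the first slot. It goes through only because the $T$-correction to the curved bracket is $\cCi(M)$-linear, so $[fX, Y]^{\s{$T$}}$ has the same Leibniz anomaly $-(Yf)X$ as the ordinary Lie bracket, and this cancels precisely against the anomaly produced by \ref{ca5} on $[fs(X), s(Y)]$; the other potential anomaly $2\la s(X), s(Y)\ra \rho^*\dif f$ from \ref{ca5} is killed by isotropy. Without either cancellation, one would acquire $\dif f$ obstructions that could not be absorbed into a $3$-form $H$, and the classification would fail.
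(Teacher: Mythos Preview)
Your proposal is correct and follows essentially the same route as the paper's proof: choose an isotropic splitting, transport the pairing and anchor via $X+\alpha\mapsto s(X)+\rho^*\alpha$, use Lemma~\ref{lem: reg-ca-10-11} for the mixed bracket terms, read off the tangent part of $[s(X),s(Y)]$ from the curvature formula, and define $H(X,Y,Z)\coloneqq 2\la[s(X),s(Y)],s(Z)\ra$. Your treatment of the tensoriality and skew-symmetry of $H$ is in fact more explicit than the paper's, which simply asserts that these follow from isotropy together with \ref{ca1} and \ref{ca2}; note, incidentally, that once total antisymmetry is established you can get $\cCi(M)$-linearity more cheaply from the third slot (where it is immediate from the bilinearity of the pairing) rather than via \ref{ca5}.
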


\begin{proof} See Appendix \ref{app:proofs}.
\end{proof}

\sloppy Proposition \ref{prop: curved-class} gives that each equivalence class on the set of exact curved Courant algebroids on $M$, where the equivalence is given by the natural notion of isomorphism (Definition \ref{def: iso}), contains at least one curved Courant algebroid of the form $(\mathbb{T}M,\la\,\,,\,\ra_+,\pr_{TM},[\,\,,\,]_{\s{$H$}}^{\s{$T$}})$. In the next proposition, we show when two such algebroids are contained in the same class.

\begin{lemma}\label{lem: curved-class}
Two curved Courant algebroids $(\mathbb{T}M,\la\,\,,\,\ra_+,\pr_{TM},[\,\,,\,]_{\s{$H$}}^{\s{$T$}})$ and $(\mathbb{T}M,\la\,\,,\,\ra_+,\pr_{TM},[\,\,,\,]_{\s{$H'$}}^{\s{$T'$}})$ are isomorphic if and only if
\begin{align*}
T=T' & &\text{and} & &H'-H\in\im\rest{\dif^{\s{$T$}}}{\Omega^2(M)}.
\end{align*}
\end{lemma}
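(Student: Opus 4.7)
The plan is to first determine the general form of a metric-algebroid isomorphism $\phi$ between the two structures and then extract conditions on $(T,H),(T',H')$ from bracket compatibility.

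I would first observe that any vector bundle isomorphism $\phi\colon\mathbb{T}M\to\mathbb{T}M$ over $\id_M$ commuting with $\pr_{TM}$ and preserving $\la\,\,,\,\ra_+$ must take the form $\phi(X+\alpha)=X+\alpha+\iota_X B$ for some $B\in\Omega^2(M)$. Writing $\phi(X+\alpha)=X+A\alpha+BX$ for vector bundle morphisms $A\colon T^*M\to T^*M$ and $B\colon TM\to T^*M$, compatibility with the anchor is automatic; pairing preservation tested on a covector and a vector forces $A=\id$, and tested on two vectors forces $B$ to be skew-symmetric.

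Next, I would expand both sides of
\begin{equation*}
[\phi(X+\alpha),\phi(Y+\beta)]^{\s{$T'$}}_{\s{$H'$}}=\phi([X+\alpha,Y+\beta]^{\s{$T$}}_{\s{$H$}}).
\end{equation*}
Comparing the $TM$-components yields $[X,Y]^{\s{$T'$}}=[X,Y]^{\s{$T$}}$ for all $X,Y\in\mathfrak{X}(M)$, equivalent to $T=T'$. Assuming $T=T'$, the $T^*M$-components reduce, after cancellation of the $\alpha$ and $\beta$ terms, to
\begin{equation*}
\iota_Y\iota_X(H'-H)=L^{\s{$T$}}_X\iota_YB-\iota_Y\dif^{\s{$T$}}\iota_XB-\iota_{[X,Y]^{\s{$T$}}}B.
\end{equation*}
Invoking the curved Cartan identities $[L^{\s{$T$}}_X,\iota_Y]_\text{g}=\iota_{[X,Y]^{\s{$T$}}}$ and $L^{\s{$T$}}_X=\iota_X\dif^{\s{$T$}}+\dif^{\s{$T$}}\iota_X$ from \eqref{eq: curved-Cartan-calculus}, the right-hand side collapses to $-\iota_Y\iota_X\dif^{\s{$T$}}B$. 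Since this must hold for all $X,Y$, we conclude $H'-H=-\dif^{\s{$T$}}B\in\im\rest{\dif^{\s{$T$}}}{\Omega^2(M)}$, giving the ``only if'' direction. Conversely, given $T=T'$ and $B'\in\Omega^2(M)$ with $\dif^{\s{$T$}}B'=H'-H$, the map $\phi(X+\alpha)\coloneqq X+\alpha-\iota_X B'$ satisfies all requirements by running the same computation backwards.

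The main obstacle is the $T^*M$-component bookkeeping; this requires systematic use of the graded-commutation relations for the curved operators. Notably, no vanishing of $\dif^{\s{$T$}}\circ\dif^{\s{$T$}}$ is needed, so the argument genuinely takes place in the curved setting.
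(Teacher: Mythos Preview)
Your proposal is correct and follows essentially the same approach as the paper's proof: both reduce the isomorphism to a $B$-transform, extract $T=T'$ from the $TM$-component of bracket compatibility, and then use the curved Cartan relations $[L^{\s{$T$}}_X,\iota_Y]_\text{g}=\iota_{[X,Y]^{\s{$T$}}}$ and $L^{\s{$T$}}_X=\iota_X\dif^{\s{$T$}}+\dif^{\s{$T$}}\iota_X$ to collapse the $T^*M$-component to $H'-H=-\dif^{\s{$T$}}B$. The only (minor) difference is that you derive the $B$-transform form of $\phi$ from first principles and spell out the converse, whereas the paper simply cites the $B$-transform fact and leaves the converse implicit in the chain of equivalences.
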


\begin{proof} See Appendix \ref{app:proofs}.
\end{proof}

From Proposition \ref{prop: curved-class} and Lemma \ref{lem: curved-class}, we deduce the classification of exact curved Courant algebroids: those with different curvatures are not isomorphic, and for those with the same curvature we have the following.

\begin{theorem}\label{thm: curved-class}
We have the one-to-one correspondence
\begin{equation*}
\frac{\Omega^3(M)}{\im\rest{\dif^{\s{$T$}}}{\Omega^2(M)}} \cong\left\{\begin{array}{c}
 \text{exact curved Courant algebroids over }M\\
 \text{with curvature $T\in\Omega^2(M,TM)$}
 \end{array}\right\}\Big/\sim.
\end{equation*}
\end{theorem}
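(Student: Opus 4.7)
The plan is to observe that this theorem packages the two immediately preceding results, Proposition~\ref{prop: curved-class} and Lemma~\ref{lem: curved-class}, into a clean bijection. Fix a curvature $T\in\Omega^2(M,TM)$ and consider the assignment
\[
\Phi_T\colon \Omega^3(M)\longrightarrow \left\{\text{exact curved Courant algebroids over }M\text{ with curvature }T\right\}/\sim
\]
sending $H\in\Omega^3(M)$ to the isomorphism class of $(\mathbb{T}M,\la\,\,,\,\ra_+,\pr_{TM},[\,\,,\,]_{\s{$H$}}^{\s{$T$}})$. The computation $F(X+\alpha,Y+\beta)=T(X,Y)$ carried out in the paragraphs preceding Proposition~\ref{prop: curved-class} shows that this tuple is indeed an exact curved Courant algebroid with curvature $T$, so $\Phi_T$ lands in the claimed codomain.

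First I would argue surjectivity of $\Phi_T$: by Proposition~\ref{prop: curved-class}, any exact curved Courant algebroid is isomorphic to some $(\mathbb{T}M,\la\,\,,\,\ra_+,\pr_{TM},[\,\,,\,]_{\s{$H$}}^{\s{$T'$}})$, and since isomorphisms in the sense of Definition~\ref{def: iso} intertwine both the anchor and the bracket, the intrinsically defined map $F$ from \eqref{eq: ante-curvature} is preserved under isomorphism. Because an exact algebroid is transitive, Proposition~\ref{prop: reg-trans-curved-ca} identifies the curvature $T$ uniquely from $F$, so if the starting algebroid has curvature $T$ then necessarily $T'=T$. Next I would show that $\Phi_T$ descends to an injection on $\Omega^3(M)/\im\rest{\dif^{\s{$T$}}}{\Omega^2(M)}$. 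This is exactly the content of Lemma~\ref{lem: curved-class} with $T=T'$: the two standard forms $[\,\,,\,]_{\s{$H$}}^{\s{$T$}}$ and $[\,\,,\,]_{\s{$H'$}}^{\s{$T$}}$ yield isomorphic algebroids if and only if $H'-H\in\im\rest{\dif^{\s{$T$}}}{\Omega^2(M)}$, so $\Phi_T$ both factors through the quotient and is injective there.

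The subsidiary statement in the paragraph preceding the theorem -- that exact curved Courant algebroids with different curvatures cannot be isomorphic -- follows from the same isomorphism-invariance of $F$: two such algebroids with distinct curvatures $T\neq T'$ would, after putting each in normal form via Proposition~\ref{prop: curved-class}, be ruled out by the ``only if'' direction of Lemma~\ref{lem: curved-class}. Hence the classes on the right-hand side indexed by distinct $T$'s are indeed disjoint. I do not expect any real obstacle here: the heavy lifting is already carried out in Proposition~\ref{prop: curved-class} (existence of a normal form with parameters $(T,H)$) and Lemma~\ref{lem: curved-class} (the precise $(T,H)$-ambiguity of that normal form). The only point worth spelling out is the one-line observation that the curvature $F$ defined by \eqref{eq: ante-curvature} is invariant under the isomorphism relation of Definition~\ref{def: iso}, which is immediate from the compatibility of $\phi$ with $\rho$ and $[\,\,,\,]$.
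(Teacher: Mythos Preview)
Your proposal is correct and follows exactly the paper's approach: the theorem is stated as an immediate consequence of Proposition~\ref{prop: curved-class} and Lemma~\ref{lem: curved-class}, with no further argument given. Your write-up simply makes explicit the map $\Phi_T$ and the isomorphism-invariance of $F$ that the paper leaves implicit in the one-line deduction preceding the theorem statement.
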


Clearly, an exact curved Courant algebroid is pre-Courant if and only if its curvature vanishes, and we thus obtain the following for exact pre-Courant algebroids:
\begin{equation}\label{eq: class-pre}
\frac{\Omega^3(M)}{\Omega^3_\text{exact}(M)} \cong \{
 \text{exact pre-Courant algebroids over }M\}\Big/\sim.
\end{equation}

\begin{corollary}\label{cor: strict}
 The inclusions in Corollary \ref{cor: inclusions} are strict.
\end{corollary}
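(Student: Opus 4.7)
The plan is to witness each strict inclusion by an explicit example, drawing on the classification of exact structures and one mild deformation of it.

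For Courant $\subsetneq$ pre-Courant, the classification \eqref{eq: class-pre} identifies exact pre-Courant algebroids with $\Omega^3(M)/\Omega^3_\text{exact}(M)$, while exact Courant algebroids correspond only to closed classes in $H^3_\text{dR}(M)$. Any non-closed $H \in \Omega^3(M)$---for instance $H = x_4 \dif x_1 \wedge \dif x_2 \wedge \dif x_3$ on $\R^4$---thus yields an exact pre-Courant algebroid failing \ref{ca3}. For pre-Courant $\subsetneq$ curved Courant, Theorem \ref{thm: curved-class} together with the observation preceding \eqref{eq: class-pre} identifies pre-Courant within curved Courant exactly as the case $T = 0$; any nonzero $T \in \Omega^2(M,TM)$, e.g.\ $T = \dif x \wedge \dif y \otimes \partial_x$ on $\R^2$, produces a curved Courant algebroid violating \ref{ca6}.

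For curved Courant $\subsetneq$ ante-Courant, the classification alone is insufficient, so I plan to deform $[\,\,,\,]_H^T$ by a tri-vector correction. For $\pi \in \Gamma(\wedge^3 TM)$, define on $\mathbb{T}M$
\[
[X+\alpha, Y+\beta]_{H,\pi}^T \coloneqq [X+\alpha, Y+\beta]_H^T + \iota_\alpha\iota_\beta\pi.
\]
The correction $G(a,b) \coloneqq \iota_\alpha\iota_\beta\pi \in TM$ is $\cCi(M)$-bilinear, skew by antisymmetry of contractions, and vanishes when $\alpha = \beta$, so \ref{ca1} is unchanged. Its contribution to \ref{ca2} reduces to $\gamma(\iota_\alpha\iota_\beta\pi) + \beta(\iota_\alpha\iota_\gamma\pi) = 0$, which holds by the total skew-symmetry of $\pi$. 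Hence the tuple is a metric algebroid with curvature
\[
F(X+\alpha, Y+\beta) = T(X,Y) + \iota_\alpha\iota_\beta\pi,
\]
which is a $TM$-valued tensor (so ante-Courant). For $\pi \neq 0$, however, $F(\alpha, \beta) = \iota_\alpha\iota_\beta\pi \neq 0$ with $\alpha \in \ker\rho$, violating \ref{ca9}. Picking any nonzero $\pi$, e.g.\ $\pi = \partial_x \wedge \partial_y \wedge \partial_z$ on $\R^3$, therefore yields an exact ante-Courant algebroid that is not curved.

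The main obstacle is identifying the $\pi$-deformation in the first place, since the classification of Section \ref{sec: exact-curved} does not separate exact curved from exact ante-Courant structures. Once the correction is in hand, verifying it preserves the metric algebroid axioms reduces to the total antisymmetry of $\pi$, and compatibility with \ref{ca4}--\ref{ca5} is automatic from the $\cCi(M)$-bilinearity of the correction.
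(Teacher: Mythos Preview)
Your proof is correct and follows essentially the same route as the paper: the first two strict inclusions via the exact classifications (non-closed $H$, nonzero $T$), and the third via a trivector deformation of the bracket on $\mathbb{T}M$. The paper perturbs the plain Dorfman bracket by $\iota_\beta\iota_\alpha\mathcal{X}$ rather than your $\iota_\alpha\iota_\beta\pi$ added to $[\,\,,\,]_H^T$, but this is only a sign and a choice of base bracket; the mechanism and verification are identical.
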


\begin{proof}
 The strictness of the first inclusion follows from the classification of exact (pre-)Courant algebroids (\cite{SevL} and \eqref{eq: class-pre}) and the fact that there are non-closed $3$-forms in dimension greater than three. Similarly, the strictness of the second one follows from \eqref{eq: class-pre}, Theorem \ref{thm: curved-class}, and the fact that $\Omega^2(M,TM)\neq 0$ in dimension greater than one. Finally, for a trivector field $\mathcal{X}\in\mathfrak{X}^3(M)$, consider the bracket 
 \begin{equation*}
 [X+\alpha,Y+\beta]_{\s{$\mathcal{X}$}}\coloneqq [X+\alpha,Y+\beta]+\iota_\beta\iota_\alpha\mathcal{X}.
 \end{equation*}
 It follows easily from the skew-symmetry of $\mathcal{X}$ and the fact that the fundamental Courant algebroid satisfies \ref{ca1} and \ref{ca2} that $(\mathbb{T}M,\la\,\,,\,\ra_+,\pr_{TM},[\,\,,\,]_{\s{$\mathcal{X}$}})$ is a metric algebroid. As $F(X+\alpha,Y+\beta)=\iota_\beta\iota_\alpha\mathcal{X}$, it is also an ante-Courant algebroid. However, if $\mathcal{X}\neq0$, which is always possible in dimension greater than two, it is not a curved Courant algebroid because $\ker\rho=T^*M$, which proves that the third inclusion is also strict.
\end{proof}

\subsection{Curved differential graded Lie algebras and derived brackets}\label{sec: derived-brackets} 
There is a natural way to see the vector space $\Gamma(\mathbb{T}M)$ inside $\en(\Omega^\bullet(M))$, by identifying $X+\alpha\in\Gamma(\mathbb{T}M)$ with $\iota_X+\alpha\wedge\in\en(\Omega^\bullet(M))$. This identification allows us to derive the bracket $[\,\,,\,]^{\s{$T$}}_{\s{$H$}}$ for arbitrary $T\in\Omega^2(M,TM)$ and $H\in\Omega^3(M)$. Namely, for $\dif_{\s{$H$}}^{\s{$T$}}\coloneqq\dif^{\s{$T$}}+H\wedge\in\en(\Omega^\bullet(M))$, we have
\begin{equation}\label{eq:derived-bracket-d-T-H}
 [X+\alpha,Y+\beta]^{\s{$T$}}_{\s{$H$}}=[[X+\alpha,\dif^{\s{$T$}}_{\s{$H$}}]_\text{g},Y+\beta]_\text{g}.
\end{equation}

This example shows a generalization of the concept of \textit{derived bracket} \cite{KosDB}: when the differential (in this case, $\dif^{\s{$T$}}_{\s{$H$}}$) does not square to zero, we get a bracket that does not necessarily satisfy the Jacobi identity (as the ones we find in pre, curved and ante-Courant algebroids). Moreover, we obtain a curved diffential graded Lie algebra. In the case of \eqref{eq:derived-bracket-d-T-H}, it is $(\en(\Omega^\bullet(M)),[\,\,,\,]_\text{g},[\dif_{\s{$H$}}^{\s{$T$}},\,]_\text{g},\mathcal{R}_{\s{$H$}})$, where
\begin{equation*}
 \mathcal{R}_{\s{$H$}}\coloneqq\dif_{\s{$H$}}^{\s{$T$}}\circ \dif_{\s{$H$}}^{\s{$T$}}=\mathcal{R}+(\dif^{\s{$T$}}H)\wedge
\end{equation*}
and $\mathcal{R}\coloneqq\dif^{\s{$T$}}\circ \dif^{\s{$T$}}$, which can be also expressed by \eqref{eq: curvature}.

We can compute the Jacobiator of $[\,\,,\,]^{\s{$T$}}_{\s{$H$}}$ in terms of $\mathcal{R}$ and $\dif^{\s{$T$}}H$. 

\begin{proposition}
    For $T\in\Omega^2(M,TM)$, $H\in\Omega^3(M)$, the Jacobiator of $[\,\,,\,]^{\s{$T$}}_{\s{$H$}}$ reads as
    \begin{equation*}
J(X,Y,Z)+[[\iota_X,\mathcal{R}]_\emph{g},\iota_Y]_\emph{g}\eta+\iota_Z[\mathcal{R},\iota_X]_\emph{g}\beta-\iota_Z\iota_Y\mathcal{R}\alpha+\iota_Z\iota_Y\iota_X\dif^{\s{$T$}}H,
 \end{equation*}
where $J(X,Y,Z)\in\mathfrak{X}(M)$ is given by $\iota_{J(X,Y,Z)}=[[[\iota_X,\mathcal{R}]_\emph{g},\iota_Y]_\emph{g},\iota_Z]_\emph{g}$. In particular, the Jacobi identity \emph{\ref{ca3}} is satisfied if and only if $T=0$ and $\dif H=0$.
\end{proposition}

\begin{proof}
    By \eqref{eq: curved-Cartan-calculus} and the graded Jacobi identity for $[\,\,,\,]_\text{g}$, we have that the Jacobiator 
    \[[X+\alpha,[Y+\beta, Z+\eta]^{\s{$T$}}_{\s{$H$}}]^{\s{$T$}}_{\s{$H$}}-[[X+\alpha, Y+\beta]^{\s{$T$}}_{\s{$H$}}, Z+\eta]^{\s{$T$}}_{\s{$H$}}-[Y+\beta,[X+\alpha, Z+\eta]^{\s{$T$}}_{\s{$H$}}]^{\s{$T$}}_{\s{$H$}}\] is equal to
 \begin{equation*}
J(X,Y,Z)+[[\iota_X,\mathcal{R}]_\text{g},\iota_Y]_\text{g}\eta+\iota_Z[\mathcal{R},\iota_X]_\text{g}\beta-\iota_Z\iota_Y\mathcal{R}\alpha+\iota_Z\iota_Y\iota_X\dif^{\s{$T$}}H.
 \end{equation*}
 The Jacobi identity \ref{ca3} is thus satisfied if and only if $\mathcal{R}$ and $\dif^{\s{$T$}}H$ vanish, so $\mathcal{R}_{\s{$H$}}$ vanishes, that is, the curved differential graded Lie algebra is flat. This of course happens if and only if $T=0$ and $\dif H=0$.
\end{proof}

Another instance of this generalization of the derived bracket formalism is the curved bracket \eqref{eq: bracket} on $\mathfrak{X}(M)$ with the underlying curved differential graded Lie algebra $(\en(\Omega^\bullet(M)),[\,\,,\,]_\text{g},[\dif^{\s{$T$}},\,]_\text{g},\mathcal{R})$.


\section{Courant algebroid lifts}\label{sec: Ca-lift}
 In this section, we present a natural construction that leads from a Courant algebroid to a curved Courant algebroid. The starting data is a tuple $(\mathbb{E},\la\,\,,\,\ra, \rho, [\,\,,\,])$ over $M$, where $\mathbb{E}= TM\oplus E$ for some vector bundle $\pr\colon E\rightarrow M$, together with a vector bundle connection $\nabla$ on $E$. Note that we do not assume yet any of the properties \ref{ca1} – \ref{ca13} to be satisfied by $(\mathbb{E},\la\,\,,\,\ra, \rho, [\,\,,\,])$.

The choice of a connection $\nabla$ corresponds to the decomposition of $TE$ into a horizontal subbundle $\mathcal{H}\nb$ and the vertical subbundle $\mathcal{V}$:
\begin{equation*}
 TE=\mathcal{H}\nb\oplus \mathcal{V}\cong \pr^* TM\oplus \pr^*E=\pr^*\mathbb{E}.
\end{equation*}
In addition, we have the injective $\cCi(M)$-module morphism
\begin{equation*}
 \phi\nb \colon \Gamma(\mathbb{E})\rightarrow\mathfrak{X}(E),
\end{equation*}
whose restriction to $\mathfrak{X}(M)$ and $\Gamma(E)$ is the horizontal lift $(\;)^\text{h}$ and the vertical lift $(\;)^\text{v}$ respectively (see Appendix \ref{app: lifts} for an overview of the various lifts that we use). The subset $\im\phi\nb $ locally generates the entire space $\mathfrak{X}(E)$ and, moreover, for every $q\in E$ we have the vector space isomorphism
\begin{equation}\label{eq: vs-iso}
\begin{split}
 \phi\nb^q\colon \mathbb{E}_{\pr(q)}&\rightarrow T_qE\\
\text{a}&\mapsto (\phi a)_q,
\end{split}
\end{equation}
where $a$ is an arbitrary section of $\mathbb{E}$ such that $a_{\pr(q)}=\text{a}$. To simplify the notation, the subscript $\nabla$ from $\mathcal{H}\nb$, $\phi\nb$, $\phi\nb^q$ will be omitted in the following sections.

\subsection{Definition of the Courant algebroid lift}\label{sec: construction}
Let $(\mathbb{E},\la\,\,,\,\ra, \rho, [\,\,,\,])$ and $\nabla$ be as above. We describe how to construct a tuple over $E$. 

First, we define the \mbox{(pseudo-)Riem}annian metric $g\nb$ on $E$ (the pairing on $TE$):
\begin{equation}\label{eq: metric}
 g\nb(\phi a,\phi b)\coloneqq\pr^*\la a,b\ra
\end{equation}
for $a,b\in\Gamma(\mathbb{E})$. Analogously, we lift the anchor $\rho$ and the bracket $[\,\,,\,]$ to the maps $\rho\nb\colon\im\phi \rightarrow \mathfrak{X}(E)$ and $[\,\,,\,]\nb\colon\im\phi \times\im\phi \rightarrow\im\phi $ by
\begin{align}\label{eq: rho-nb-hor}
 \rho\nb(\phi a)\coloneqq \phi \rho(a)=\rho(a)^\text{h} & &\text{and} & &[\phi a,\phi b]\nb\coloneqq \phi [a,b].
\end{align}
We show that $\rho\nb$ and $[\,\,,\,]\nb$ are naturally extended to an anchor and a bracket.

\begin{lemma}\label{lem: anchor}
 There is a unique extension of $\rho\nb$ to a $\cCi(E)$-linear map $\mathfrak{X}(E)\rightarrow\mathfrak{X}(E)$, so $\rho\nb$ is identified with a vector bundle endomorphism over the identity of $TE$. In addition, it satisfies
 \begin{equation}\label{eq: rho-phi}
 \rho^*\nb\circ\pr^*=\phi \circ\rho^*.
 \end{equation}
\end{lemma}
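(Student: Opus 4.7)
The plan is to construct $\rho\nb$ first as a vector bundle endomorphism of $TE$ via the pointwise isomorphisms $\phi^q$ from \eqref{eq: vs-iso}, and then to deduce from this the unique $\cCi(E)$-linear extension from $\im\phi$ to $\mathfrak{X}(E)$. To begin, the family $\{\phi^q\}_{q\in E}$ assembles into a vector bundle isomorphism $\Phi\colon\pr^*\mathbb{E}\rightarrow TE$, whose smoothness is ensured by the fact that a local frame $\{a_i\}$ of $\mathbb{E}$ gives rise, via $\phi$, to a local frame $\{\phi a_i\}$ of $TE$. Since $\mathbb{E}=TM\oplus E$, we have $\pr^*\mathbb{E}=\pr^*TM\oplus\pr^*E$, so I would define $\rho\nb\in\End(TE)$ by transporting through $\Phi$ the endomorphism of $\pr^*\mathbb{E}$ given by $\pr^*\rho$ followed by the canonical inclusion $\pr^*TM\hookrightarrow\pr^*\mathbb{E}$.

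Compatibility with the prescribed values on $\im\phi$ is then a pointwise verification: for $a\in\Gamma(\mathbb{E})$ and $q\in E$, the vector $(\phi a)_q$ is sent by $\Phi^{-1}$ to $(\pr^*a)_q$, and composing with $\pr^*\rho$ and $\Phi$ outputs $(\phi\rho(a))_q$, in agreement with \eqref{eq: rho-nb-hor}. The uniqueness of the $\cCi(E)$-linear extension then follows because $\im\phi$ locally generates $\mathfrak{X}(E)$ as a $\cCi(E)$-module. The main potential obstacle here, that distinct $\cCi(E)$-combinations of elements of $\im\phi$ could coincide in $\mathfrak{X}(E)$ and force additional compatibility conditions, is automatically handled by the pointwise construction through $\Phi$.

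For the identity $\rho^*\nb\circ\pr^*=\tfrac{1}{2}\phi\circ\rho^*$, the strategy is to pair both sides with an arbitrary $\phi b$ under $g\nb$ and invoke the non-degeneracy of $g\nb$ together with the local generation of $\mathfrak{X}(E)$ by $\im\phi$. Using $\rho^*\nb=\tfrac{1}{2}g\nb^{-1}\circ(\rho\nb)^t$ together with \eqref{eq: metric} and \eqref{eq: rho-nb-hor}, one computes
\begin{equation*}
g\nb(\rho^*\nb(\pr^*\alpha),\phi b)=\tfrac{1}{2}\,\pr^*\alpha(\phi\rho(b))=\tfrac{1}{2}\,\pr^*(\rho^t\alpha(b)),
\end{equation*}
while from \eqref{eq: metric} and the identity $\la\rho^*\alpha,b\ra=\tfrac{1}{2}\rho^t\alpha(b)$,
\begin{equation*}
g\nb\bigl(\tfrac{1}{2}\phi(\rho^*\alpha),\phi b\bigr)=\pr^*\la\rho^*\alpha,b\ra=\tfrac{1}{2}\,\pr^*(\rho^t\alpha(b)).
\end{equation*}
Both expressions agree for every $b\in\Gamma(\mathbb{E})$, yielding the desired identity.
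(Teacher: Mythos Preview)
Your argument is correct. For the identity \eqref{eq: rho-phi}, your computation is essentially the paper's, written with $\rho^t$ in place of $\alpha(\rho(\cdot))$.

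For the first part, you take a slightly different route. The paper verifies directly that $\rho\nb$ respects the $\cCi(M)$-module structure already present on $\im\phi$, namely $\rho\nb\bigl((\pr^*f)\phi a\bigr)=(\pr^*f)\rho\nb(\phi a)$ via $\phi(fa)=(\pr^*f)\phi a$ and Lemma~\ref{lem: hor-ver}, and then invokes that $\rho\nb$ is pointwise and $\im\phi$ locally generates $\mathfrak{X}(E)$. You instead build the endomorphism globally by conjugating $\pr^*\rho$ (followed by the inclusion $\pr^*TM\hookrightarrow\pr^*\mathbb{E}$) through the bundle isomorphism $\Phi\colon\pr^*\mathbb{E}\to TE$, and only afterwards check that this reproduces \eqref{eq: rho-nb-hor} on $\im\phi$. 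Your construction makes the well-definedness and smoothness of the extension immediate and renders the $\cCi(M)$-compatibility check unnecessary; the paper's approach is shorter on paper but leans on the phrase ``pointwise operator'' to absorb exactly the well-definedness issue you make explicit.
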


\begin{proof}
 For $f\in\cCi(M)$, we have, by Lemma \ref{lem: hor-ver} and \eqref{eq: rho-nb-hor}, that
\begin{equation*}
 \rho\nb ((\pr^*f)\phi a)=\rho\nb (\phi (fa))=\rho(fa)^\text{h}=(\pr^*f)\rho(a)^\text{h}=(\pr^*f)\rho\nb(\phi a),
\end{equation*}
hence asking the extension of $\rho\nb$ by $\cCi(E)$-linearity is meaningful. Since $\rho\nb$ is a pointwise operator and $\im\phi $ locally generates $\mathfrak{X}(E)$, there is indeed a unique extension of $\rho\nb$ to $\mathfrak{X}(E)$. By \eqref{eq: rho-nb-hor} and the fact that $\rho^*\nb=g\nb^{-1}\circ\rho\nb^t$, we find that
\begin{align*}
 g\nb(\rho^*\nb\pr^*\alpha,\phi a)&=(\pr^*\alpha)(\rho(a)^\text{h})=\pr^*(\alpha(\rho(a))=\pr^*\la\rho^*\alpha,a\ra=g\nb(\phi \rho^*\alpha,\phi a)
 \end{align*}
 and the result follows from the non-degeneracy of $g\nb$.
\end{proof}

\begin{lemma}\label{lem: brakcet}
 If $(\mathbb{E},\la\,\,,\,\ra, \rho, [\,\,,\,])$ satisfies \emph{\ref{ca4}} and \emph{\ref{ca5}}, there is a unique extension of $[\,\,,\,]\nb$ to a bracket on $TE$, which we denote by the same symbol $[\,\,,\,]\nb$, such that $(TE,g\nb,\rho\nb,[\,\,,\,]\nb)$ also satisfies \emph{\ref{ca4}} and \emph{\ref{ca5}}.
\end{lemma}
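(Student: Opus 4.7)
The plan is to treat uniqueness and existence separately, in both cases exploiting that any vector field on $E$ is, in a local frame $\{a_i\}$ of $\mathbb{E}$, of the form $\sum F_i\phi a_i$ with $F_i\in\cCi(E)$, thanks to the pointwise isomorphism \eqref{eq: vs-iso}.

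For \emph{uniqueness}, suppose $[\,\,,\,]\nb$ is any extension satisfying \ref{ca4} and \ref{ca5}. Given $F,G\in\cCi(E)$ and $a,b\in\Gamma(\mathbb{E})$, applying \ref{ca5} in the first slot, then \ref{ca4} in the second, and using $\rho\nb(G\phi b)F=G\rho\nb(\phi b)F$ together with $g\nb(\phi a,\phi b)=2\pr^*\la a,b\ra$ from \eqref{eq: metric}, yields
\begin{equation*}
[F\phi a,G\phi b]\nb
= -G(\rho\nb(\phi b)F)\phi a + F(\rho\nb(\phi a)G)\phi b + FG\,\phi[a,b] + 4G\pr^*\la a,b\ra\,\rho^*\nb\dif F.
\end{equation*}
Since $[\,\,,\,]\nb$ on the pairs $\phi a_i,\phi a_j$ is prescribed as $\phi[a_i,a_j]$, and every vector field is an $\R$-bilinear combination of terms $F_i\phi a_i$ in a local frame, the bracket is determined everywhere.

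For \emph{existence}, I fix a local frame $\{a_i\}$ of $\mathbb{E}$ and \emph{define} $[\,\,,\,]\nb$ on pairs $F\phi a_i,\,G\phi a_j$ by the displayed formula, extending by $\R$-bilinearity to local sums. Three things need to be verified: (i) frame independence; (ii) that the definition recovers $\phi[a,b]$ on $\im\phi$; and (iii) properties \ref{ca4} and \ref{ca5}. Points (ii) and (iii) are direct from the formula, using Lemma \ref{lem: anchor} (in particular \eqref{eq: rho-phi} together with $\rho\nb(\phi a)(\pr^*f)=\pr^*(\rho(a)f)$) and the identity $\phi(fa)=(\pr^*f)\phi a$. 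The core of the argument is (i): if $\{a_i\}$ and $\{a'_j\}$ are two local frames related by $a_i=\sum_j B^i_j a'_j$ with $B^i_j\in\cCi(M)$, then the formula applied in the new frame to $\pr^*(B^i_j)\phi a'_j,\phi c$ yields, after simplification with \eqref{eq: rho-phi} and the fact that $\dif\pr^*B^i_j=\pr^*\dif B^i_j$,
\begin{equation*}
-\pr^*(\rho(c)B^i_j)\phi a'_j + \pr^*(B^i_j)\phi[a'_j,c] + 2\pr^*\la a'_j,c\ra\,\phi\rho^*\dif B^i_j,
\end{equation*}
which equals $\phi[B^i_j a'_j,c]$ by applying \ref{ca5} inside $(\mathbb{E},\la\,\,,\,\ra,\rho,[\,\,,\,])$. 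The analogous check in the second slot uses \ref{ca4}. This gives frame independence for vector fields of the form $F\phi c$, and $\R$-bilinearity then propagates it to arbitrary sums.

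The main obstacle is purely technical: keeping track of the $\pr^*$'s, the factor from $g\nb(\phi a,\phi b)=2\pr^*\la a,b\ra$, and the identity $\rho^*\nb\circ\pr^*=\tfrac12\phi\circ\rho^*$ in the frame-change computation, to ensure that the coboundary-like terms produced by \ref{ca5} on $E$ exactly match those produced by \ref{ca5} upstairs on $\mathbb{E}$. Once this is done, \ref{ca4} and \ref{ca5} for the extended bracket follow by a direct expansion of $[FX,GY]\nb$ with $X=\sum F_i\phi a_i$, $Y=\sum G_j\phi a_j$, using the already-verified formula on the generators.
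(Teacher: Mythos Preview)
Your proposal is correct and takes essentially the same approach as the paper: both arguments hinge on the consistency check that $\phi[a,fb]$ and $\phi[fa,b]$ already match what \ref{ca4} and \ref{ca5} would impose on $[\phi a,(\pr^*f)\phi b]\nb$ and $[(\pr^*f)\phi a,\phi b]\nb$, and then use that $\im\phi$ locally generates $\mathfrak{X}(E)$ to conclude. The paper compresses your existence step into a single sentence (``asking \ref{ca4} and \ref{ca5} is meaningful and the result follows from locality''), whereas you spell out the explicit formula and the frame-independence verification; the content is the same.
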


\begin{proof}

As $(\mathbb{E},\la\,\,,\,\ra, \rho, [\,\,,\,])$ satisfies \ref{ca4}, we find, by Lemma \ref{lem: hor-ver} and \eqref{eq: rho-nb-hor}, that
\begin{align*}
 [\phi a,(\pr^*f)\phi b]\nb&=[\phi a,\phi (fb)]\nb=\phi [a,fb]=\phi ((\rho(a)f)b+f[a,b])\\
 &=(\pr^*(\rho(a)f))\phi b+(\pr^*f)\phi [a,b]\\
 &=(\rho(a)^\text{h}\pr^*f)\phi b+(\pr^*f)[\phi a,\phi b]\nb\\
 &=(\rho\nb(\phi a)\pr^*f)\phi b+(\pr^*f)[\phi a,\phi b]\nb.
\end{align*}
Analogously, using that $(\mathbb{E},\la\,\,,\,\ra, \rho, [\,\,,\,])$ satisfies \ref{ca5} yields
\begin{equation*}
 [(\pr^*f)\phi a,\phi b]\nb=-(\rho\nb(\phi a)\pr^*f)\phi b+(\pr^*f)[\phi a,\phi b]\nb+(\pr^*\la a,b\ra)\phi (\rho^*\dif f ).
\end{equation*}
By \eqref{eq: rho-phi}, we finally arrive at
\begin{equation*}
 [(\pr^*f)\phi a,\phi b]\nb=-(\rho\nb(\phi a)\pr^*f)\phi b+(\pr^*f)[\phi a,\phi b]\nb+g\nb(\phi a,\phi b)\rho\nb^*\dif (\pr^*f ).
\end{equation*}
Therefore, asking \ref{ca4} and \ref{ca5} to be satisfied by the extension of $[\,\,,\,]\nb$ (together with $g\nb$ and $\rho\nb$) is meaningful and the result follows from the fact that $[\,\,,\,]\nb$ is a local operator in both inputs and that $\im\phi $ locally generates $\mathfrak{X}(E)$.
\end{proof}

We put all this together into a definition.

\begin{definition}\label{def: ca-lift}
 Let $(\mathbb{E},\la\,\,,\,\ra, \rho, [\,\,,\,])$ be a tuple over $M$ satisfying \ref{ca4} and \ref{ca5} such that $\mathbb{E}=TM\oplus E$ for some vector bundle $E$ over $M$, and let $\nabla$ be a vector bundle connection on $E$. We call the tuple $(TE,g\nb,\rho\nb,[\,\,,\,]\nb)$ constructed above the \textbf{Courant algebroid lift} of $(\mathbb{E},\la\,\,,\,\ra, \rho, [\,\,,\,])$ by $\nabla$.
\end{definition}

\begin{remark}\label{rk: ca-lift-transitive}
 Note that the Courant algebroid lift is always non-transitive if $E\neq 0$, as $\rk \rest{\rho\nb}{q}=\rk\rho_{\pr(q)}\leq\dim M<\dim E$ for all $q\in E$. On the other hand, it is regular if and only if the original tuple is regular. If $E=0$, the identification $E\cong M$ also identifies the lift with the original tuple. 
\end{remark}

We have seen that the Courant algebroid lift always satisfies \ref{ca4} and \ref{ca5}, just as the starting tuple does. In this section and the following we explore which properties from \ref{ca1} – \ref{ca13} are inherited by the Courant algebroid lift. We first need a technical lemma.

\begin{lemma}\label{lem: Ca45}
 Let $(\mathbb{E},\la\,\,,\,\ra,\rho,[\,\,,\,])$ satisfy only \emph{\ref{ca4}} and \emph{\ref{ca5}}. The maps $S\colon\Gamma(\mathbb{E})\times\Gamma(\mathbb{E})\rightarrow\Gamma(\mathbb{E})$ and $K\colon\Gamma(\mathbb{E})\times\Gamma(\mathbb{E})\times\Gamma(\mathbb{E})\rightarrow\cCi(M)$ given by
\begin{align*}
 S(a,b)&\coloneqq[a,b]+[b,a]-\rho^*\dif\la a,b\ra,\\
 K(a,b,c)&\coloneqq\rho(a)\la b,c\ra-\la[a,b],c\ra-\la b, [a,c]\ra,
\end{align*}
are $\cCi(M)$-linear in all their inputs. If the tuple is, in addition, a pre-Courant algebroid, the map $\jac_{[\,\,,\,]}\colon \Gamma(\mathbb{E})\times\Gamma(\mathbb{E})\times\Gamma(\mathbb{E})\rightarrow\Gamma(\mathbb{E})$,
\begin{equation*}
 \jac_{[\,\,,\,]}(a,b,c)\coloneqq[a,[b,c]]-[[a,b],c]-[b,[a,c]]
\end{equation*}
 is $\cCi(M)$-linear and skew-symmetric in all its inputs.
\end{lemma}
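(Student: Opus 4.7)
The plan is to check the three $\mathcal{C}^\infty(M)$-linearity claims one by one, exploiting the obvious symmetries of $S$, $K$, and $\jac_{[\,\,,\,\,]}$ to reduce the number of slots that need to be verified. The main tools are the already available axioms \ref{ca4} and \ref{ca5}, together with \ref{ca1}, \ref{ca2}, \ref{ca6}, \ref{ca7} in the pre-Courant part.

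For $S$, note that by definition $S(a,b)=S(b,a)$, so it suffices to compute $S(a,fb)$. I would insert \ref{ca4} into $[a,fb]$ and \ref{ca5} into $[fb,a]$; then $\rho^*\dif \la a,fb\ra$ is expanded by the Leibniz rule for $\dif$ applied to $f\la a,b\ra$. The $(\rho(a)f)b$ terms from \ref{ca4} and \ref{ca5} cancel, while the term $2\la a,b\ra \rho^*\dif f$ from \ref{ca5} cancels against the corresponding piece coming from $2\rho^*\dif(f\la a,b\ra)$. What remains is exactly $fS(a,b)$.

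For $K$, the symmetry of the pairing yields $K(a,b,c)=K(a,c,b)$, so only linearity in $a$ and in $b$ must be checked. Linearity in $b$ is straightforward using \ref{ca4}: the $\rho(a)f$ term inside $[a,fb]$ kills the analogous term coming from $\rho(a)\la fb,c\ra$. For linearity in $a$, I would apply \ref{ca5} to both $[fa,b]$ and $[fa,c]$, then use the identity $2\la\rho^*\dif f,c\ra = (\dif f)(\rho(c))=\rho(c)f$ to process the $\rho^*\dif f$ pieces; all the derivatives of $f$ cancel pairwise and only $fK(a,b,c)$ survives.

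For $\jac_{[\,\,,\,\,]}$ in the pre-Courant setting, I would first establish full skew-symmetry and then deduce $\mathcal{C}^\infty(M)$-linearity from it. Skew-symmetry in the last two slots follows from the identity
\begin{equation*}
\jac_{[\,\,,\,\,]}(a,b,c)+\jac_{[\,\,,\,\,]}(a,c,b)=[a,[b,c]+[c,b]]-2\rho^*\dif\la c,[a,b]\ra-2\rho^*\dif\la b,[a,c]\ra,
\end{equation*}
after using the polarization of \ref{ca1} to rewrite $[b,c]+[c,b]=2\rho^*\dif\la b,c\ra$ and similar equalities for the other two symmetric sums. Then \ref{ca7} turns $[a,\rho^*\dif\la b,c\ra]$ into $\rho^*\dif(\rho(a)\la b,c\ra)$, and \ref{ca2} expands $\rho(a)\la b,c\ra=\la[a,b],c\ra+\la b,[a,c]\ra$, making the whole expression collapse to zero. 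Skew-symmetry in the first two slots is easier: $\jac_{[\,\,,\,\,]}(a,b,c)+\jac_{[\,\,,\,\,]}(b,a,c)=-[[a,b]+[b,a],c]=-2[\rho^*\dif\la a,b\ra,c]$, which vanishes by \ref{ca8} since $\dif\la a,b\ra$ is closed. Once skew-symmetry is established it suffices to check $\mathcal{C}^\infty(M)$-linearity in one slot, say $c$; expanding $\jac_{[\,\,,\,\,]}(a,b,fc)$ with \ref{ca4} produces a $c$-coefficient of the form $\rho(a)\rho(b)f-\rho([a,b])f-\rho(b)\rho(a)f$, which vanishes thanks to \ref{ca6}, so the remaining terms assemble into $f\jac_{[\,\,,\,\,]}(a,b,c)$. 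The most delicate computation is the skew-symmetry in $(b,c)$, where three different axioms must conspire; everything else is a bookkeeping check.
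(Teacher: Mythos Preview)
Your argument is correct and follows essentially the same route as the paper. For $S$ and $K$ the paper simply says the claims follow ``straightforwardly'' from \ref{ca4} and \ref{ca5}, which is exactly what you spell out. For the Jacobiator, the paper organizes the computation by writing $\jac_{[\,\,,\,\,]}(a,b,fc)$, $\jac_{[\,\,,\,\,]}(a,a,b)$ and $\jac_{[\,\,,\,\,]}(a,b,b)$ explicitly in terms of $S$, $K$ and $F$, and then observes that in the pre-Courant case $S=K=F=0$ together with $[a,\rho^*\dif f]=\rho^*L_{\rho(a)}\dif f$ and $[\rho^*\dif f,a]=0$ from Lemma~\ref{lem: ca-7-8}. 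Your version is the polarized form of the same identities: your $\jac(a,b,c)+\jac(b,a,c)$ and $\jac(a,b,c)+\jac(a,c,b)$ are precisely the polarizations of the paper's $\jac(a,a,b)$ and $\jac(a,b,b)$, and you invoke the pre-Courant axioms directly rather than via $S$, $K$, $F$. The linearity check in the third slot is identical in both.
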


\begin{proof}
The claims about the maps $S$ and $K$ follow straightforwardly by using \ref{ca4} and \ref{ca5}. For the map $\jac_{[\,\,,\,]}$, we find similarly that
\begin{align*}
 \jac_{[\,\,,\,]}(a,b,fc)=\,&f\jac(a,b,c)-(F(a,b)f)c,\\
 \jac_{[\,\,,\,]}(a,a,b)=\,&-[S(a,a),b]-[\rho^*\dif\la a,a\ra,b],\\
 \jac_{[\,\,,\,]}(a, b,b)=\,&[a,S(b,b)]+[a,\rho^*\dif \la b,b\ra]-\rho^*L_{\rho(a)}\dif\la b,b\ra-S([a,b],b)+K(a,b,b).
\end{align*}
By definition, $(\mathbb{E},\la\,\,,\,\ra,\rho,[\,\,,\,])$ is pre-Courant if and only if $S=0$, $K=0$, and $F=0$. Moreover, by Lemma \ref{lem: ca-7-8}, we have that $[a,\rho^*\dif f]=\rho^*L_{\rho(a)}\dif f$ and $[\rho^*\dif f,a]=0$ for any pre-Courant algebroid, hence the result follows.
\end{proof}

\begin{lemma}\label{lem: ca1213}
The Courant algebroid lift satisfies the property \emph{\ref{ca1}}, \emph{\ref{ca2}}, or \emph{\ref{ca13}} if and only if the original tuple satisfies the same property.
\end{lemma}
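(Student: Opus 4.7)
The plan is to exploit the fact that the lifted tuple already satisfies \ref{ca4} and \ref{ca5} by Lemma \ref{lem: brakcet}, so Lemma \ref{lem: Ca45} applies on both sides. There, \ref{ca1} is equivalent to the vanishing of the $\cCi$-linear tensor $S$ and \ref{ca2} to the vanishing of the $\cCi$-linear tensor $K$; the same equivalences hold on the lift for the analogous tensors $S\nb$ and $K\nb$, which are $\cCi(E)$-linear. Since $\im\phi$ locally generates $\mathfrak{X}(E)$ as a $\cCi(E)$-module, it suffices to check that $S\nb$ and $K\nb$ vanish on triples of the form $\phi a,\phi b,\phi c$.

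A direct computation from \eqref{eq: metric}, \eqref{eq: rho-nb-hor}, together with $\rho\nb^*\circ\pr^*=\tfrac12\phi\circ\rho^*$ from \eqref{eq: rho-phi} and the compatibility of the horizontal lift with pullback functions (Lemma \ref{lem: hor-ver}), should yield the two key identities
\begin{equation*}
S\nb(\phi a,\phi b)=\phi\, S(a,b), \qquad K\nb(\phi a,\phi b,\phi c)=2\,\pr^*K(a,b,c).
\end{equation*}
Because $\phi\colon\Gamma(\mathbb{E})\to\mathfrak{X}(E)$ is injective and $\pr^*\colon\cCi(M)\to\cCi(E)$ is injective (as $\pr$ is a surjective submersion), these identities immediately give $S\nb\equiv 0\Leftrightarrow S\equiv 0$ and $K\nb\equiv 0\Leftrightarrow K\equiv 0$, which handles the cases of \ref{ca1} and \ref{ca2}.

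For \ref{ca13}, the composition $\rho\nb\circ\rho\nb^*$ is a pointwise vector bundle morphism, so the argument is fiberwise. At any $q\in E$, the pointwise isomorphism $\phi^q$ of \eqref{eq: vs-iso} gives $\rho\nb^q=\phi^q\circ\rho_{\pr(q)}\circ(\phi^q)^{-1}$, and a short dualization from $g\nb^q(\phi^q a,\phi^q b)=2\la a,b\ra_{\pr(q)}$ together with the definition $\rho\nb^{*}=\tfrac12 g\nb^{-1}\circ\rho\nb^t$ yields $\rho\nb^{*,q}=\tfrac12\phi^q\circ\rho^*_{\pr(q)}\circ(\phi^q)^t$. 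Composing,
\begin{equation*}
\rho\nb^q\circ\rho\nb^{*,q}=\tfrac12\,\phi^q\circ(\rho\circ\rho^*)_{\pr(q)}\circ(\phi^q)^t,
\end{equation*}
and the invertibility of $\phi^q$ and $(\phi^q)^t$ gives the equivalence of \ref{ca13} on the lift with \ref{ca13} on $\mathbb{E}$. The main obstacle is purely the bookkeeping of identifications and factors of $2$ arising from the definition $\rho^*=\tfrac12\la\,\,,\,\ra^{-1}\circ\rho^t$ and the factor $2$ in \eqref{eq: metric}; once the three compatibility formulas above are in place, the proof reduces to injectivity and fiberwise invertibility.
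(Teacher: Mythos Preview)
Your argument for \ref{ca1} and \ref{ca2} is essentially identical to the paper's: both reduce via Lemma \ref{lem: Ca45} to checking the tensors $S$ and $K$ on generators $\phi a,\phi b,\phi c$, obtain $S\nb(\phi a,\phi b)=\phi\,S(a,b)$ and $K\nb(\phi a,\phi b,\phi c)=2\pr^*K(a,b,c)$, and conclude by injectivity of $\phi$ and $\pr^*$.

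For \ref{ca13} your route differs from the paper's. You compute $\rho\nb\circ\rho\nb^*$ directly by conjugating through $\phi^q$; the paper instead invokes the coisotropy characterization of Proposition~\ref{prop: coisotropic}, first showing $\ker\rest{\rho\nb}{q}=\phi^q(\ker\rho_{\pr(q)})$ and then that $\phi^q$ carries orthogonal complements to orthogonal complements. Both arguments are valid. Your direct computation is arguably cleaner, but note that your intermediate formula $\rho\nb^{*,q}=\tfrac12\phi^q\circ\rho^*_{\pr(q)}\circ(\phi^q)^t$ does not type-check as written: $(\phi^q)^t$ lands in $\mathbb{E}^*_{\pr(q)}$ while $\rho^*_{\pr(q)}$ has domain $T^*_{\pr(q)}M$, so a restriction map $\iota^t\colon\mathbb{E}^*\to T^*M$ (dual to the inclusion $TM\hookrightarrow\mathbb{E}$) must be inserted. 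With that fix the composition goes through and the conclusion stands. The paper's approach, while slightly less direct, has the advantage of establishing the identity \eqref{eq: kernels}, which is reused in the proofs of Proposition~\ref{prop: ca-lift-curved-ca} and Theorem~\ref{thm: nabla-ca-action}.
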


\begin{proof}
Since, by Lemma \ref{lem: brakcet}, every Courant algebroid lift always satisfies \ref{ca4} and \ref{ca5}, it follows from the first part of Lemma \ref{lem: Ca45} that it is enough to check that the properties \ref{ca1} and \ref{ca2} are satisfied on local generators of $\mathfrak{X}(E)$.

As \eqref{eq: rho-phi} is true for $\rho\nb$, we find that
\begin{align*}
[\phi a,\phi b]+[\phi b,\phi a]-\rho\nb^*\dif g\nb(\phi a,\phi b)&=\phi ([a,b]+[b,a])-\rho\nb^*\pr^*\dif\la a,b\ra\\
&=\phi([a,b]+[b,a]-\rho^*\dif\la a,b\ra).
 \end{align*}
Analogously, \eqref{eq: rho-nb-hor} and Lemma \ref{lem: hor-ver} give that
 \begin{align*}
\rho\nb (\phi a)g\nb(\phi b,\phi c)-g\nb([\phi a,\phi b]\nb&,\phi c)-g\nb(\phi b,[\phi a, \phi c]\nb)\\
&=\pr^*(\rho(a)\la b,c\ra-\la[a,b],c\ra-\la b,[a,c]\ra).
 \end{align*}
The results for \ref{ca1} and \ref{ca2} follow from the injectivity of $\phi$ and $\pr^*$ respectively.

Finally, using the fact that we have the vector space isomorphism \eqref{eq: vs-iso} yields
 \begin{align*}
 0&=\rho\nb(\phi ^q\text{a})=\rest{\rho\nb(\phi a)}{q}=(\rest{\phi \rho(a))}{q}=\phi ^q\rho(\text{a}) & &\Leftrightarrow & 0&=\rho(\text{a})
 \end{align*}
for $q\in E$, $\text{a}\in\mathbb{E}_{\pr(q)}$ and $a\in\Gamma(\mathbb{E})$ such that $a_{\pr(a)}=\text{a}$, hence 
\begin{equation}\label{eq: kernels}
 \ker\rest{\rho\nb}{q}=\phi ^q(\ker\rho_{\pr(q)}).
\end{equation}
Therefore, $\phi ^q \text{a}\in(\ker\rest{\rho\nb}{q})^\perp$ if and only if, for $\text{b}\in\ker\rho_{\pr(q)}$, we have that
\begin{equation*}
 0= g\nb(\phi^q \text{a},\phi^q \text{b})=\la \text{a},\text{b}\ra,
\end{equation*}
that is, $(\ker\rest{\rho\nb}{q})^\perp=\phi^q((\ker\rho_{\pr(q)})^\perp)$. So $\ker\rest{\rho\nb}{q}$ is coisotropic if and only if $\ker\rho_{\pr(q)}$ is coisotropic. The result for \ref{ca13} follows from Proposition \ref{prop: coisotropic}.
\end{proof}

\subsection{Curved Courant algebroids as lifts of Courant algebroids}
Lemma \ref{lem: ca1213} gives that the lift of a metric (ante-Courant) algebroid is a metric (ante-Courant) algebroid. We will show that the same applies for curved Courant algebroids. In the following, we will denote the curvature of a vector bundle connection $\nabla$ by $R\nb\in\Omega^2(M,\en E)$, and we will also use the \textit{vertical lift of a field of endomorphisms $(\;)^\upsilon\colon \Gamma(\en E)\rightarrow\mathfrak{X}(E)$} (see Appendix \ref{app: lifts}).

\begin{proposition}\label{prop: ca-lift-curved-ca}
 The lift of a curved Courant algebroid with curvature $F$ is a curved Courant algebroid with curvature $F\nb\in\Omega^2(E,TE)$ given by
 \begin{equation*}
 F\nb (\phi a,\phi b)=F(a,b)^\emph{h}+R\nb(\rho(a),\rho(b))^\upsilon.
 \end{equation*}
\end{proposition}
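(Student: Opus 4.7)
The plan is to use the tensorial character of $F\nb$ to reduce the proof to a computation on local generators of the form $\phi a,\phi b$, and then appeal to the standard formula for the Lie bracket of horizontal lifts.

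Since a curved Courant algebroid is ante-Courant by Corollary \ref{cor: inclusions}, it satisfies \ref{ca13} by Lemma \ref{lem: ca-13}. Lemma \ref{lem: ca1213} then gives that the lift also satisfies \ref{ca13}, and one further application of Lemma \ref{lem: ca-13} shows that the lift is ante-Courant. This means $F\nb$ is automatically $\cCi(E)$-bilinear and skew-symmetric, so it defines an element of $\Omega^2(E,TE)$, and the first half of \ref{ca9} is free. Because $F\nb$ is tensorial and $\im\phi$ locally generates $\mathfrak{X}(E)$, it is enough to evaluate $F\nb$ on pairs of the form $(\phi a,\phi b)$.

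By \eqref{eq: rho-nb-hor} we have $\rho\nb([\phi a,\phi b]\nb)=\rho\nb(\phi[a,b])=\rho([a,b])^\text{h}$. For the Lie-bracket term, I would use the standard formula from Appendix \ref{app: lifts} for the Lie bracket of horizontal lifts of vector fields with respect to a vector bundle connection,
\begin{equation*}
[X^\text{h},Y^\text{h}]_\text{Lie}=[X,Y]_\text{Lie}^\text{h}-R\nb(X,Y)^\upsilon,
\end{equation*}
where $(\,)^\upsilon$ denotes the vertical lift of a field of endomorphisms. Combined with $\rho\nb(\phi a)=\rho(a)^\text{h}$, subtracting the two contributions yields
\begin{equation*}
F\nb(\phi a,\phi b)=(\rho([a,b])-[\rho(a),\rho(b)]_\text{Lie})^\text{h}+R\nb(\rho(a),\rho(b))^\upsilon=F(a,b)^\text{h}+R\nb(\rho(a),\rho(b))^\upsilon,
\end{equation*}
which is the asserted identity.

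Finally, I would verify the vanishing on $\ker\rho\nb$. By \eqref{eq: kernels}, an arbitrary element of $\ker\rest{\rho\nb}{q}$ has the form $\phi^q\text{a}$ with $\text{a}\in\ker\rho_{\pr(q)}$, so it suffices to check that $F\nb(\phi a,\phi b)$ vanishes at $q$ whenever $a_{\pr(q)}\in\ker\rho_{\pr(q)}$. Then $\rho(a)_{\pr(q)}=0$, so the curvature term $R\nb(\rho(a),\rho(b))^\upsilon$ vanishes at $q$, and the first term vanishes by \ref{ca9} for the original algebroid, using tensoriality of $F$ to pass to a pointwise statement. The main obstacle is just the bookkeeping around the horizontal-lift bracket formula and the sign conventions for $R\nb$ and $(\,)^\upsilon$; once these are fixed, the rest is a clean translation of the anchor–bracket compatibility defect from the base to the total space.
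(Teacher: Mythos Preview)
Your proposal is correct and follows essentially the same route as the paper: reduce to local generators via the ante-Courant property of the lift (obtained from Lemma~\ref{lem: ca1213} on \ref{ca13}), compute $F\nb(\phi a,\phi b)$ using \eqref{eq: rho-nb-hor} and the horizontal-lift bracket formula of Lemma~\ref{lem: hor-ver-com}, and finish the kernel condition pointwise via \eqref{eq: kernels}. Your treatment of the ante-Courant step and of the vanishing on $\ker\rho\nb$ is in fact more explicit than the paper's, which compresses these into one line.
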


\begin{proof}
 As every curved Courant algebroid is ante-Courant, its lift is, by Lemma \ref{lem: ca1213}, also an ante-Courant algebroid, that is, $F\nb\in\Omega^2(E,TE)$ and it is thus determined by its values on local generators. By \eqref{eq: rho-nb-hor}
 and the fact that $F$ is the curvature of the original curved Courant algebroid, we get
 \begin{align*}
 F\nb(\phi a,\phi b)&=\rho\nb([\phi a,\phi b]\nb)-[\rho\nb(\phi a),\rho\nb(\phi b)]_\text{Lie}=\rho([a,b])^\text{h}-[\rho(a)^\text{h},\rho(b)^\text{h}]_\text{Lie}\\
 &=[\rho(a),\rho(b)]^\text{h}_\text{Lie}+F(a,b)^\text{h}-[\rho(a)^\text{h},\rho(b)^\text{h}]_\text{Lie}.
 \end{align*}
Lemma \ref{lem: hor-ver-com} yields $F\nb(\phi a,\phi b)=F(a,b)^\text{h}+R\nb(\rho(a),\rho(b))^\upsilon$. The result follows by going pointwise and using \eqref{eq: kernels}.
\end{proof}

By Proposition \ref{prop: ca-lift-curved-ca}, even if $F$ vanishes, there is still a term involving $R\nb$ in the formula for $F\nb$. By the injectivity of $(\;)^\upsilon$, we deduce the following.

\begin{corollary}\label{cor: curved-pre-ca}
The lift of a pre-Courant algebroid is a curved Courant algebroid with curvature $F\nb\in\Omega^2(E,TE)$ given by
 \begin{equation*}
 F\nb (\phi a,\phi b)=R\nb(\rho(a),\rho(b))^\upsilon.
 \end{equation*}
In particular, the lift becomes pre-Courant if and only if $R\nb(\rho(a),\rho(b))=0$.
\end{corollary}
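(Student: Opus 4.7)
The plan is to derive this corollary directly from Proposition \ref{prop: ca-lift-curved-ca} by observing that a pre-Courant algebroid is precisely a curved Courant algebroid whose curvature $F$ vanishes. Indeed, condition \ref{ca6} is literally the statement $F(a,b) = \rho([a,b]) - [\rho(a),\rho(b)]_{\text{Lie}} = 0$, and \ref{ca6} together with the metric algebroid axioms \ref{ca1}, \ref{ca2} certainly implies the weaker condition \ref{ca9} of being a curved Courant algebroid with $F=0$.

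First I would invoke Proposition \ref{prop: ca-lift-curved-ca}: applying it to the pre-Courant algebroid $(\mathbb{E},\la\,\,,\,\ra,\rho,[\,\,,\,])$ viewed as a curved Courant algebroid with $F=0$, we conclude that its lift $(TE,g\nb,\rho\nb,[\,\,,\,]\nb)$ is a curved Courant algebroid whose curvature on local generators is
\begin{equation*}
F\nb(\phi a,\phi b) = F(a,b)^{\text{h}} + R\nb(\rho(a),\rho(b))^\upsilon = R\nb(\rho(a),\rho(b))^\upsilon.
\end{equation*}
Since the lift is ante-Courant (again by Lemma \ref{lem: ca1213} or Proposition \ref{prop: ca-lift-curved-ca}), $F\nb\in\Omega^2(E,TE)$ is $\cCi(E)$-bilinear, and is thus determined by its values on the local generators $\phi a$, giving the claimed formula.

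For the second assertion, by definition the lift is pre-Courant precisely when its curvature $F\nb$ vanishes identically, so I would argue that $F\nb = 0$ iff $R\nb(\rho(a),\rho(b))^\upsilon = 0$ for all $a,b\in\Gamma(\mathbb{E})$. The injectivity of the vertical lift of endomorphisms $(\,)^\upsilon\colon\Gamma(\en E)\rightarrow\mathfrak{X}(E)$, recalled just before the corollary (and collected in Appendix \ref{app: lifts}), then immediately upgrades this to $R\nb(\rho(a),\rho(b))=0$ for all $a,b$. I expect no real obstacle here: the only subtlety is making sure the reduction ``pre-Courant means $F=0$ as a curved Courant algebroid'' is explicit, and that the appeal to injectivity of $(\,)^\upsilon$ is cited correctly.
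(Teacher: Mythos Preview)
Your proposal is correct and follows exactly the same approach as the paper: specialize Proposition~\ref{prop: ca-lift-curved-ca} to the case $F=0$ (which is precisely the pre-Courant condition \ref{ca6}), and then use the injectivity of $(\,)^\upsilon$ to characterize when the lift itself is pre-Courant. The paper's own justification is just the one-line remark preceding the corollary, so your write-up is in fact more detailed than what appears there.
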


Finally, we characterize when the Courant algebroid lift is Courant.

\begin{theorem}\label{thm: curved-ca}
 The lift of a curved Courant algebroid is a Courant algebroid if and only if $R\nb(\rho(a),\rho(b))=0$ and the original tuple is Courant.
\end{theorem}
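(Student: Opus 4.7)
The plan is to use Proposition \ref{prop: ca-lift-curved-ca} to split the Courant condition on the lift into two components that can be analyzed separately: being pre-Courant (i.e., vanishing of $F\nb$) and satisfying the Jacobi identity \ref{ca3}. By Lemma \ref{lem: ca1213} applied to the original curved Courant algebroid, the lift already inherits \ref{ca1} and \ref{ca2}, and by Lemma \ref{lem: brakcet} it satisfies \ref{ca4} and \ref{ca5}. So the Courant property for the lift reduces to the conjunction of the two remaining requirements.

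For the first component, Proposition \ref{prop: ca-lift-curved-ca} gives $F\nb(\phi a,\phi b)=F(a,b)^{\text{h}}+R\nb(\rho(a),\rho(b))^{\upsilon}$. Since $\mathcal{H}\nb$ and $\mathcal{V}$ are complementary subbundles of $TE$, the horizontal and vertical summands are linearly independent pointwise, so $F\nb=0$ is equivalent to the simultaneous vanishing of $F$ and of $R\nb(\rho(a),\rho(b))$. Consequently, if the lift is Courant then, by Corollary \ref{cor: inclusions}, it is pre-Courant and we obtain both $R\nb(\rho(a),\rho(b))=0$ and $F=0$; conversely, the hypothesis $R\nb(\rho(a),\rho(b))=0$ together with $F=0$ (automatic once the original is Courant) makes the lift pre-Courant.

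For the Jacobi identity, I will exploit the fundamental computation
\begin{equation*}
\jac_{[\,\,,\,\,]\nb}(\phi a,\phi b,\phi c)=\phi\,\jac_{[\,\,,\,\,]}(a,b,c),
\end{equation*}
which follows immediately from the defining property $[\phi a,\phi b]\nb=\phi[a,b]$ by unwinding both sides. By injectivity of $\phi$, this identity shows that on generators of the form $\phi a$ the Jacobiator of the lift vanishes if and only if the Jacobiator of the original tuple does. To promote this from generators to all sections of $TE$, I invoke the second part of Lemma \ref{lem: Ca45}: once pre-Courantness is established, the Jacobiator is $\cCi(E)$-linear and skew-symmetric, hence tensorial, so its vanishing on the locally generating set $\im\phi$ forces its vanishing everywhere.

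Putting everything together yields both directions. The delicate point, which one must order correctly, is that Lemma \ref{lem: Ca45} can only be applied \emph{after} pre-Courantness is in place: so in the ``only if'' direction one first extracts $R\nb(\rho(a),\rho(b))=0$ and $F=0$ from $F\nb=0$, and then transfers \ref{ca3} from the lift down to the original via $\phi$; in the ``if'' direction one first uses $R\nb(\rho(a),\rho(b))=0$ and Courantness of the original to conclude $F\nb=0$, and only then does Lemma \ref{lem: Ca45} apply to the lift, allowing one to pull \ref{ca3} up from the original through the displayed identity above.
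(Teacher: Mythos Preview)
Your proof is correct and follows essentially the same approach as the paper: both use Proposition \ref{prop: ca-lift-curved-ca} to reduce $F\nb=0$ to the pair of conditions $F=0$ and $R\nb(\rho(a),\rho(b))=0$, the identity $\jac_{[\,\,,\,\,]\nb}(\phi a,\phi b,\phi c)=\phi\,\jac_{[\,\,,\,\,]}(a,b,c)$ together with injectivity of $\phi$ to match Jacobiators on generators, and the second part of Lemma \ref{lem: Ca45} to pass from generators to all sections. Your write-up is in fact more explicit than the paper's about the ordering subtlety (that Lemma \ref{lem: Ca45} requires pre-Courantness of the lift before it can be invoked), which the paper compresses into a single sentence.
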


\begin{proof}
As the original tuple is a curved Courant algebroid, we get, by the second part of Lemma \ref{lem: Ca45}, that the Courant algebroid lift becomes a Courant algebroid if and only if $F\nb=0$ and the Jacobiator for $[\,\,,\,]\nb$ vanishes on local generators of $\mathfrak{X}(E)$. The result follows from Proposition \ref{prop: ca-lift-curved-ca}, the injectivity of $(\;)^\upsilon$ and $\phi $, and the fact that $\jac_{[\,\,,\,]\nb}(\phi a,\phi b,\phi c)=\phi \jac_{[\,\,,\,]}(a,b,c)=0$.
\end{proof}

\subsection{Lifts of exact tuples}\label{sec: PW-Ca}
The most prominent examples of Courant algebroid lifts are those, when $E=T^*M$ and $(\mathbb{E},\la\,\,,\,\ra,\rho,[\,\,,\,])$ is the fundamental Courant algebroid on $M$ (Example \ref{ex: fund-Ca}). A vector bundle connection on $E$ is then just the dual connection of an ordinary affine connection on $M$.

By Proposition \ref{prop: ca-lift-curved-ca}, the lift is a curved Courant algebroid. Since $\rho=\pr_{TM}$, the lifted anchor $\rho\nb$ is simply just the projection on the horizontal subbundle $\pr_{\mathcal{H}}$ and thus the curvature of $(T(T^*M),g\nb,\rho\nb,[\,\,,\,]\nb)$ coincides with the Riemann curvature $R\nb\in\Omega^2(M, \en TM)$. More precisely, since the curvature of the dual connection is given by $(X,Y)\mapsto -R\nb(X,Y)^t$, we have that
\begin{equation*} F\nb(X^\text{h}+\alpha^\text{v},Y^\text{h}+\beta^\text{v})=-(R\nb(X,Y)^t)^\upsilon.
\end{equation*}

We can also express the bracket $[\,\,,\,]\nb$ explicitly in this case. For $V,W\in\mathfrak{X}(T^*M)$
\begin{equation*}
 [V,W]\nb=\pr_{\mathcal{H}}[V_1,W_1]_\text{Lie}+g\nb^{-1}(L_{V_1}g\nb(W_2)-\iota_{W_1}\dif g\nb(V_2)),
\end{equation*}
where $V_1\coloneqq \pr_{\mathcal{H}}V$, $V_2\coloneqq \pr_{\mathcal{V}}V$ and $W_1,W_2$ are defined analogously. For local generators the formula can be simplified as follows:
\begin{equation*}
 [X^\text{h}+\alpha^\text{v},Y^\text{h}+\beta^\text{v}]\nb=[X,Y]^\text{h}_\text{Lie}+(L_{X}\beta-\iota_{Y}\dif \alpha)^\text{v}.
\end{equation*}

The (pseudo-)Riemannian metric $g\nb$ on $T^*M$ that we recovered by the Courant algebroid lift is the \textit{Patterson-Walker metric} \cite{PatRE}. This interesting object can be seen as a (pseudo-)Riemannian analogue of the canonical symplectic form on $T^*M$ \cite{SymCartan, SymPoisson}, and it finds applications e.g. in projective geometry \cite{dunajski-mettler}, conformal geometry \cite{conformal-PW} or para-Kähler geometry \cite{CapIPK}.

\begin{remark}
Two Patterson-Walker metrics coincide if and only if the underlying connections have the same associated torsion-free connection (see, e.g., \cite[Prop. 5.5]{SymCartan}). However, the horizontal subbundles, and hence the anchors $\rho\nb$, do not coincide in that case, so we do not restrict ourselves to torsion-free connections.
\end{remark}

 By Theorem \ref{thm: curved-ca}, the lift is a Courant algebroid if and only if $\nabla$ is flat. In this particular case, the resulting Courant algebroid on $T(T^*M)$ fits also into two different frameworks that have appeared in the literature.

 \subsubsection{Lie bialgebroid construction}
 A \textit{Lie algebroid} is a Leibniz algebroid whose bracket is anti-commutative \cite{PraFR}. Recall that Lie algebroids $(\mathbb{A},\rho_\mathbb{A}, [\,\,,\,]_\mathbb{A})$ and $(\mathbb{A}^*,\rho_{\mathbb{A}^*},[\,\,,\,]_{\mathbb{A}^*})$ are said to form a \textit{Lie bialgebroid} if, for $a,b\in\Gamma(\wedge^\bullet\mathbb{A})$, we have
 \begin{equation*}
     \dif_{\mathbb{A}^*}[a,b]=[ \dif_{\mathbb{A}^*}a,b]+[a, \dif_{\mathbb{A}^*}b],
 \end{equation*}
 where $\dif_{\mathbb{A}^*}$ is a Lie algebroid differential coming from $(\mathbb{A}^*,\rho_{\mathbb{A}^*},[\,\,,\,]_{\mathbb{A}^*})$ and $[\,\,,\,]$ is the natural extension of the Lie algebroid bracket on $\mathbb{A}$ to $\Gamma(\wedge^\bullet \mathbb{A})$.
 
 By \cite[Thm. 2.5]{LiuMTLB}, every Lie bialgebroid induces a Courant algebroid structure on $\mathbb{A}\oplus \mathbb{A}^*$, for $a+\chi, b+\tau\in\Gamma(\mathbb{A}\oplus \mathbb{A}^*)$, given by
 \begin{align*}
     \la a+\chi, b+\tau\ra&\coloneq \chi(a)+\tau(b),\\
     \rho&\coloneqq \rho_{\mathbb{A}}\circ \pr_{\mathbb{A}}+\rho_{\mathbb{A}^*}\circ \pr_{\mathbb{A}^*},\\
     [a+\chi,b+\tau]&\coloneqq[a,b]_\mathbb{A}+L^{\mathbb{A}^*}_\chi b-\iota_\tau\dif_{\mathbb{A}^*}a+[\chi,\tau]_{\mathbb{A}^*}+L^\mathbb{A}_a\tau-\iota_b\dif_\mathbb{A}\chi,
 \end{align*}
 where $L^\mathbb{A}$ and $\dif_\mathbb{A}$ denote the Lie derivative and differential on $\Gamma(\wedge^\bullet\mathbb{A}^*)$ associated with the Lie algebroid structure $(\mathbb{A},\rho_\mathbb{A}, [\,\,,\,]_\mathbb{A})$, and $L^{\mathbb{A}^*}$ and $\dif_{\mathbb{A}^*}$ are defined analogously for $(\mathbb{A}^*,\rho_{\mathbb{A}^*},[\,\,,\,]_{\mathbb{A}^*})$. In particular, for any Lie algebroid $(\mathbb{A},\rho_\mathbb{A}, [\,\,,\,]_\mathbb{A})$, we can construct a Lie bialgebroid by choosing the trivial Lie algebroid structure on $\mathbb{A}^*$, that is, $\rho_{\mathbb{A}^*}=0$ and $[\,\,,\,]_{\mathbb{A}^*}=0$. 
 
 Given a connection $\nabla$ on $M$, we can identify the vertical subbundle $\mathcal{V}\leq T(T^*M)$ with the dual of the horizontal subbundle $\mathcal{H}^*$ using the Patterson-Walker metric $g\nb$ on $T^*M$. It is easy to check that $\mathcal{H}$ together with the inclusion $\mathcal{H}\hookrightarrow T(T^*M)$ and the restriction of $[\,\,,\,]_\text{Lie}$ on $\mathfrak{X}(T^*M)$ to $\Gamma(\mathcal{H})$ forms a Lie algebroid if and only if $\nabla$ is flat. Choosing the trivial Lie algebroid structure on $\mathcal{H}^*$ makes $(\mathcal{H},\mathcal{H}^*)$ a Lie bialgebroid. The lift $(T(T^*M),g\nb,\rho\nb,[\,\,,\,]\nb)$ is clearly isomorphic to the Courant algebroid on $\mathcal{H}\oplus\mathcal{H}^*$ obtained from the Lie bialgebroid structure.
 
\subsubsection{The Courant algebroid of a para-Hermitian foliation}
A \textit{para-Hermitian structure} on a manifold $Q$ is a pair $(g,A)$ consisting of a \mbox{(pseudo-)Riem}annian metric $g$ on $Q$ and $A\in\Gamma(\en TM)$  such that $A^2=\id_{TQ}$, the \mbox{$\pm1$-eigen}subbundles of $A$ are of the same rank and involutive for $[\,\,,\,]_\text{Lie}$, and $g(AX,AY)=-g(X,Y)$. We recall the construction of a Courant algebroid structure on $TQ$ given in  \cite[Prop. 3.13]{SvoASPH}, for $X,Y\in\mathfrak{X}(Q)$ and $X_\pm, Y_\pm$ their projections to $\pm1$-eigenbundles of $A$, by
\begin{align*}
    \la\,\,,\,\ra&\coloneqq g,\! & \rho&\coloneqq \frac{1}{2}(\id_{TQ}+A),\! & [X,Y]&\coloneqq[X_+,Y_+]_\text{Lie}+g^{-1}(L_{X_+}g(Y_-)-\iota_{Y_+}\dif g(X_-)).
\end{align*}
Given a flat connection on $\nabla$ on $M$, the Patterson-Walker metric $g\nb$ together with $A\coloneqq\pr_{\mathcal{H}}-\pr_\mathcal{V}$ is a para-Hermitian structure on $T^*M$. The corresponding Courant algebroid structure on $T(T^*M)$ coincides with $(T(T^*M),g\nb,\rho\nb,[\,\,,\,]\nb)$.

\subsubsection{Lift of an exact curved Courant algebroid}
Consider an exact curved Courant algebroid, which, by Proposition \ref{prop: curved-class}, is isomorphic to $(\mathbb{T}M,\la\,\,,\,\ra_+,\pr_{TM},[\,\,,\,]_{\s{$H$}}^{\s{$T$}})$ for some $T\in\Omega^2(M,TM)$ and $H\in\Omega^3(M)$. Given a connection on $M$, we have, by Proposition \ref{prop: ca-lift-curved-ca}, that its lift is a curved Courant algebroid with curvature
\begin{align*}
 F\nb(X^\text{h}+\alpha^\text{v},Y^\text{h}
+\beta^\text{v})=T(X,Y)^\text{h}-(R\nb(X,Y)^t)^\upsilon.
\end{align*}

\begin{remark}
 If, in particular, $T$ is the torsion of $\nabla$, then $F\nb$ becomes the torsion of the natural lift $\hat{\nabla}$ (the affine connection on $T^*M$) of $\nabla$ given by
\begin{align*}
 \hat{\nabla}_{X^\text{h}}Y^\text{h}&\coloneqq(\nabla_XY)^\text{h}, & \hat{\nabla}_{\alpha^\text{v}}X^\text{h}&\coloneqq0, & \hat{\nabla}_{X^\text{h}}\alpha^\text{v}&\coloneqq(\nabla_X\alpha)^\text{v}, & \hat{\nabla}_{\alpha^\text{v}}\beta^\text{v}&\coloneqq0,
\end{align*}
see \cite[Lem. 5.9.]{SymCartan}.
\end{remark}

 By Corollary \ref{cor: curved-pre-ca} and \eqref{eq: class-pre}, the lift is a pre-Courant algebroid if and only if $F=0$, that is, $T=0$ (that is, the original tuple is pre-Courant) and $\nabla$ is flat. By Theorem \ref{thm: curved-ca}, the lift becomes a Courant algebroid if, in addition, $H$ is closed.

\subsection{Lifts of non-exact tuples}\label{sec: examples} 
We provide additional Courant algebroid lifts.

\subsubsection{Lifts of transitive Courant algebroids}\label{sec: transitive}
 We continue with the case when the original tuple is transitive. Any transitive Courant algebroid is isomorphic to $\mathbb{T}M\oplus \mathcal{G}$ with $\mathcal{G}$ a bundle of quadratic Lie algebras and anchor given by the projection onto $TM$. This isomorphism is called a \textit{dissection} of a transitive Courant algebroid \cite{ChenRCA} and we will regard all transitive Courant algebroids through a dissection. The anchor of a Courant algebroid lift is clearly the projection on the horizontal subbundle and Theorem \ref{thm: curved-ca} yields the following.

\begin{corollary}\label{cor: transitive-ca-lift} The lift of a transitive Courant algebroid (regarded as $\mathbb{T}M\oplus\mathcal{G}$) by a connection $\nabla$ on $E=T^*M\oplus \mathcal{G}$ is Courant if and only if $\nabla$ is flat.
\end{corollary}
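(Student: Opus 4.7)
The plan is to apply Theorem \ref{thm: curved-ca} directly and exploit the transitivity of the starting Courant algebroid. Since the original tuple is, by hypothesis, a Courant algebroid (hence, in particular, a curved Courant algebroid), the second condition in Theorem \ref{thm: curved-ca} is automatically satisfied. Therefore the lift is a Courant algebroid if and only if
\begin{equation*}
R\nb(\rho(a),\rho(b))=0\quad\text{for all }a,b\in\Gamma(\mathbb{E}).
\end{equation*}

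The next step is to translate this condition using the dissection. Regarding the transitive Courant algebroid as $\mathbb{T}M\oplus\mathcal{G}$, the anchor is $\rho=\pr_{TM}$, which is surjective. Hence $\{(\rho(a),\rho(b))\st a,b\in\Gamma(\mathbb{E})\}$ exhausts all pairs of vector fields on $M$, so the displayed condition is equivalent to $R\nb(X,Y)=0$ for all $X,Y\in\mathfrak{X}(M)$, i.e.\ to the flatness of $\nabla$.

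Putting the two steps together gives the stated equivalence. There is no real obstacle: the only thing to check carefully is that the transitive case indeed reduces the vanishing condition on $R\nb\circ(\rho\wedge\rho)$ to the plain flatness of $\nabla$, which is immediate once surjectivity of $\rho$ is noted. The dissection is used only to have a concrete model $E=T^*M\oplus\mathcal{G}$ on which the connection $\nabla$ lives, so that Definition \ref{def: ca-lift} applies verbatim.
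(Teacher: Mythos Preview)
Your proposal is correct and matches the paper's approach exactly: the paper states the corollary as an immediate consequence of Theorem~\ref{thm: curved-ca}, using that the anchor of the dissected transitive Courant algebroid is the surjective projection $\pr_{TM}$, so the condition $R\nb(\rho(a),\rho(b))=0$ becomes ordinary flatness of $\nabla$. There is nothing to add.
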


 \begin{remark}
 Although a dissection of a transitive Courant algebroid is not unique, any two dissections are isomorphic and it is straightforward to check that so are their lifts. Therefore, we deal with dissected structures without loss of generality.
 \end{remark}

\subsubsection{Lifts of $B_n$-Courant algebroid}\label{sec: Bn} We focus on the special case of a transitive Courant algebroid related to the so-called \textit{$B_n$-generalized geometry} \cite{BarLeib,RubBn}. The \textbf{$B_n$-Courant algebroid} is the tuple consisting of
\begin{itemize}
	\item the bundle $\mathbb{E}=\mathbb{T}M\oplus (M\times\R)$, that is, $E=T^*M\oplus (M\times\R)$,
	\item the pairing $\langle X+\alpha+f,Y+\beta+\cg\rangle \coloneqq \alpha(Y)+\beta(X)+f\cg$,
	\item the projection on the tangent bundle $\rho\coloneqq \mathrm{pr}_{TM}$,	
 	\item the bracket\newline $[X+\alpha+f,Y+\beta+\cg] \coloneqq [X,Y]_\text{Lie}+(L_X\beta-\iota_Y\dif \alpha+f\,\dif \cg) + (X\cg-Yf)$, 
\end{itemize}
 where $X+\alpha+f,Y+\beta+\cg\in\Gamma(\mathbb{E})$. As $\rho(f)=0$ and $f\notin\im\rho^*=T^*M$, this is indeed transitive but not exact.

The lift of the $B_n$-Courant algebroid gives us the \textbf{odd Patterson-Walker metric} $g\nb$ living on the manifold $E$ instead of $T^*M$ and having the signature $(n+1,n)$, where $n\coloneqq \dim M$. In natural local coordinates $(\mathcal{U},\lbrace x^i\rbrace\cup\lbrace p_i\rbrace\cup\lbrace t\rbrace)$, where $\mathcal{U}\coloneqq T^*U\oplus (U\times\R)$, we have that
\begin{align*}
 \rest{g\nb}{\mathcal{U}}=&\,\dif p_i\odot \dif x^i+\dif t\otimes \dif t\\
 &+\big(p_k(\pr^*\Gamma^k_i)+t(\pr^*\Gamma_i)\big)\dif x^i\odot \dif t-\big(p_k(\pr^*\Gamma^k_{ij})-t(\pr^*\Gamma_{ij})\big)\dif x^i\odot\dif x^j\numberthis\label{eq: odd}\\
 &-(p_k(\pr^*\Gamma^k_i)+t(\pr^*\Gamma_i))(p_l(\pr^*\Gamma^l_j)+t(\pr^*\Gamma_j))\dif x^i\odot \dif x^j,
\end{align*}
where $\{ \Gamma^k_{ji}\}\cup \{\Gamma^k_i\rbrace\cup\{\Gamma_{ki}\}\cup\{\Gamma_i\}\subseteq\cCi(U)$ are the coefficients of the vector bundle connection $\nabla$ on $E$ uniquely determined by the relations
\begin{align*}
 \Gamma^k_{ji}\dif x^j&\coloneqq -\nabla_{\partial_{x^i}}\dif x^k, & \Gamma^k_i\cdot 1&\coloneqq \nabla_{\partial_{x^i}}\dif x^k, & \Gamma_{ki}\dif x^k&\coloneqq \nabla_{\partial_{x^i}}1, & \Gamma_i\cdot 1&\coloneqq \nabla_{\partial_{x^i}}1.
\end{align*}

A special case occurs when the connection on $E$ is constructed from a connection $\nabla$ on $M$ and a field of endomorphisms $A\in\Gamma(\en TM)$ by
\begin{equation}\label{eq: Bn-special}
 \nabla^{\s{$A$}}_X(\alpha +f)\coloneqq \nabla_X\alpha+Xf+\alpha(AX).
\end{equation}
In natural coordinates, this corresponds to $\Gamma_{ki}=\Gamma_i=0$ and $\Gamma^k_i=A^k_i$, where $\rest{A}{U}=A^k_i\,\dif x^i\otimes \partial_{x^k}$. The odd Patterson-Walker metric then simplifies as
\begin{equation}
 \begin{split}\label{eq: odd-special}
 \rest{g\nb}{\mathcal{U}}=&\,\dif p_i\odot \dif x^i-p_k(\pr^*\Gamma^k_{ij})\dif x^i\odot\dif x^j\\
 &+ \dif t\otimes \dif t+ p_k(\pr^*A^k_i)\dif x^i\odot \dif t+p_kp_l(\pr^*A^k_iA^l_j)\dif x^i\odot\dif x^j.
 \end{split}
\end{equation}

\begin{remark}
 Coordinate expressions \eqref{eq: odd} and \eqref{eq: odd-special} for the odd Patterson-Walker metric shall be compared with that for the usual Patterson-Walker metric:
 \begin{equation*}
 \rest{g\nb}{T^*U}=\,\dif p_i\odot \dif x^i-p_k(\pr^*\Gamma^k_{ij})\dif x^i\odot\dif x^j.
 \end{equation*}
\end{remark}

By Corollary \ref{cor: transitive-ca-lift}, the Courant algebroid lift by $\nabla^{\s{$A$}}$ is Courant if and only if
\begin{align*}
0= R_{\scalebox{0.45}{$\nabla^A$}}(X,Y)(\alpha+f)=-R\nb(X,Y)^t\alpha+ \alpha((\dif\nb A)(X,Y)+A(T\nb(X,Y)),
\end{align*}
where $\dif\nb\colon\Omega^r(M,TM)\rightarrow\Omega^{r+1}(M,TM)$ is the exterior covariant derivative and $T\nb\in\Omega^2(M,TM)$ is the torsion corresponding to the connection $\nabla$ on $M$. So, $\nabla^{\s{$A$}}$ is flat if and only if $\nabla$ is flat and $(\dif\nb A)(X,Y)=-A(T\nb(X,Y))$.

Therefore, we can use the Courant algebroid lift to characterize \textit{special complex structures} \cite{AleSCM}, that is, pairs $(J,\nabla)$ consisting of an almost complex structure $J$ on $M$ and a torsion-free connection $\nabla$ on $M$ such that $\nabla$ is flat and $\dif\nb J=0$. Conversely, a special complex structure determines a Courant algebroid. 

\begin{corollary}\label{cor: special-complex}
 Consider a pair $(J,\nabla)$ consisting of an almost complex structure $J$ on $M$ and a torsion-free connection $\nabla$ on $M$. The lift of the $B_n$-Courant algebroid by the connection $\nabla^{\s{$J$}}$ is Courant if and only if $(J,\nabla)$ is special complex.
\end{corollary}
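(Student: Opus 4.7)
The plan is to assemble the corollary directly from the characterization just derived in the excerpt, since almost all the work has been done. Concretely, Corollary \ref{cor: transitive-ca-lift} reduces the question to determining when the vector bundle connection $\nabla^{\s{$J$}}$ on $E = T^*M \oplus (M\times\R)$ is flat, because the $B_n$-Courant algebroid is transitive and its lift by a connection on $E$ is a Courant algebroid if and only if that connection is flat.

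Next, I would invoke the computation immediately preceding the corollary, which shows that for $A \in \Gamma(\en TM)$ the curvature $R_{\scalebox{0.45}{$\nabla^A$}}$ is given by
\begin{equation*}
R_{\scalebox{0.45}{$\nabla^A$}}(X,Y)(\alpha+f) = -\tr(\alpha \otimes R\nb(X,Y)) + \alpha\bigl((\dif\nb A)(X,Y) + A(T\nb(X,Y))\bigr),
\end{equation*}
so that $\nabla^{\s{$A$}}$ is flat exactly when $\nabla$ is flat and $(\dif\nb A)(X,Y) = -A(T\nb(X,Y))$. Specializing $A = J$ and using that $\nabla$ is assumed torsion-free, so $T\nb = 0$, the second condition collapses to $\dif\nb J = 0$. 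Thus $\nabla^{\s{$J$}}$ is flat if and only if $\nabla$ is flat and $\dif\nb J = 0$, which together with the standing assumptions that $J$ is almost complex and $\nabla$ is torsion-free is exactly the definition of a special complex structure given in the excerpt.

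There is no substantive obstacle here: the statement is a direct packaging of Corollary \ref{cor: transitive-ca-lift} with the curvature formula for $\nabla^{\s{$A$}}$. The only thing one should be careful about is to point out that the vanishing of the $f$-component of $R_{\scalebox{0.45}{$\nabla^A$}}$ is automatic (the displayed formula produces no term independent of $\alpha$), so flatness of $\nabla^{\s{$J$}}$ is equivalent to the two tensorial conditions above, and nothing else needs to be checked.
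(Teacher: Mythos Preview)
Your proposal is correct and matches the paper's approach exactly: the corollary is stated without a separate proof, as it follows immediately from Corollary~\ref{cor: transitive-ca-lift} together with the curvature computation for $\nabla^{\s{$A$}}$ displayed just before it, specialized to $A=J$ with $T\nb=0$.
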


\subsubsection{Lifts of quadratic Lie algebras} 
For a tuple $(\mathbb{E},\la\,\,,\,\ra,\rho,[\,\,,\,])$ over $M=\{ *\}$, we have that $TM$ and $\rho$ are trivial, and $(\mathbb{E},[\,\,,\,])$ becomes, by \ref{ca1} and $\ref{ca3}$, a real finite-dimensional Lie algebra together with a pairing $\la\,\,,\,\ra$ satisfying, by \ref{ca2}, $0=\langle [a,b],c\rangle+\langle b,[a,c]\rangle$, that is, a quadratic Lie algebra. This is indeed a one-to-one correspondence.

\begin{example}
Every semi-simple Lie algebra with the \textit{Cartan-Killing form} is a quadratic Lie algebra. Also, every Lie bialgebra (equivalently, a Manin triple) gives a quadratic Lie algebra via its double \cite{DriQG}.
\end{example}

A quadratic Lie algebra $(\mathfrak{g},\la\,\,,\,\ra,[\,\,,\,]_\mathfrak{g})$ can thus be seen as a Courant algebroid on the bundle $T\{*\}\oplus \mathfrak{g}\cong \mathfrak{g}$, hence we can apply the Courant algebroid lift ($\mathbb{E}=E=\mathfrak{g}$ in this case). There is a unique choice of a vector bundle connection on $\mathfrak{g}$ given by $\nabla u=0$ for all $u\in\mathfrak{g}$, which is, moreover, flat. By Corollary \ref{cor: transitive-ca-lift}, we get the Courant algebroid $(T\mathfrak{g}, g,\rho,[\,\,,\,])$. The horizontal lift is the $0$ map in this case, hence, by \eqref{eq: rho-nb-hor}, we have that $\rho=0$, and $g$ together with $[\,\,,\,]$ are determined pointwise by the structure of the quadratic Lie algebra. In particular, we have that the bracket is $\cCi(\mathfrak{g})$-bilinear and $(T_u\mathfrak{g},\la\,\,,\,\ra_u,[\,\,,\,]_u)$ is a quadratic Lie algebra canonically isomorphic to $\mathfrak{g}$ for all $u\in \mathfrak{g}$. Therefore, $(T\mathfrak{g},g,[\,\,,\,])$ is a \textit{bundle of quadratic Lie algebras} where, moreover, all the fibres are isomorphic. 

In fact, it is not difficult to prove that there is a one-to-one correspondence
\begin{equation*}
\left\{
\text{Courant algebroids with } \rho=0\right\}\cong\left\{\text{bundles of quadratic Lie algebras}\right\}.
\end{equation*}

\subsubsection{Courant algebroid induced by a Poisson structure}\label{sec: Poisson}
Every Poisson structure $\pi\in\mathfrak{X}^2(M)$ endows $T^*M$ with the Lie algebroid structure with anchor $\pi\colon T^*M\rightarrow TM$ and bracket $[\alpha,\beta]_\pi\coloneqq L_{\pi(\alpha)}\beta-L_{\pi(\beta)}\alpha-\dif\pi(\alpha,\beta)$, see e.g. \cite{CraLPG}. Choosing the trivial Lie algebroid structure on $TM$ (with zero anchor and zero bracket) makes $(TM, T^*M)$ a Lie bialgebroid, hence a Poisson structure $\pi$ equips $\mathbb{T}M$ with a Courant algebroid structure with pairing $\la\,\,,\,\ra_+$ and anchor $\pi\circ\pr_{T^*M}$. It is regular if and only if the underlying Poisson structure is regular, and it is transitive (and hence exact) if and only if $\pi$ is invertible, that is, $\pi^{-1}$ is a symplectic structure. 

\begin{remark}
 Alternatively, the bracket $[\,\,,\,]$ defined by the Lie bialgebroid construction from $\pi\in\mathfrak{X}^2(M)$ is a derived bracket in the sense of \cite{KosDB}. Using the Poisson differential $\dif _\pi\in\en(\mathfrak{X}^\bullet(M))$ and identifying $X+\alpha\in\Gamma(\mathbb{T}M)$ with $X\wedge +\iota_\alpha\in\en(\mathfrak{X}^\bullet(M))$, we have
\begin{equation*}
 [X+\alpha,Y+\beta]= [[X+\alpha,\dif _\pi]_\mathrm{g},Y+\beta]_\mathrm{g}=L^\pi_\alpha Y-\iota_\beta\dif_\pi X+[\alpha,\beta]_\pi,
 \end{equation*}
 where $L^\pi$ is the Lie derivative associated with the Lie algebroid structure.
\end{remark}

\begin{example}
 A degenerate left-invariant Poisson structure on a Lie group leads to a regular non-transitive Courant algebroid, whereas a linear Poisson structure (a real finite-dimensional Lie algebra) gives a non-regular Courant algebroid.
\end{example}

Given an affine connection $\nabla$ on $M$, by Proposition \ref{prop: ca-lift-curved-ca}, the lift of the Courant algebroid induced by a Poisson structure $\pi$ is the curved Courant algebroid on $T(T^*M)$ with the Patterson-Walker metric, whose curvature is
\begin{equation}\label{eq: Poisson-curvature}
 F\nb(X^\text{h}+\alpha^\text{v}, Y^\text{h}+\beta^\text{v})=-(R\nb(\pi(\alpha),\pi(\beta))^t)^\upsilon.
\end{equation}
By Theorem \ref{thm: curved-ca}, it becomes a Courant algebroid if and only if $R\nb$ vanishes on $\im\pi$, that is, $\nabla$ is flat when restricted to Hamiltonian vector fields.

For instance, given a linear Poisson structure, the lift by the natural \textit{Euclidean connection}, which is flat, is always Courant. On the other hand, we have that even lifts by non-flat connections can be Courant algebroids.

\begin{example}\label{ex: Poisson-Heisenberg}
 Consider the Poisson structure $\pi$ on $\R^3$ given by the \textit{Heisenberg Lie algebra} $\mathfrak{h}_3$, that is, $\pi= z\,\partial_x\wedge \partial_y$. By \eqref{eq: Poisson-curvature}, the lift of the corresponding Courant algebroid by a connection $\nabla$ on $\R^3$ is a Courant algebroid if and only if
 \begin{equation*}
 z^2\,R\nb(\partial_x,\partial_y)=0,
 \end{equation*}
 that is, by continuity, $R\nb(\partial_x, \partial_y)=0$. A concrete non-flat example of such connection is given by declaring $\partial_x$ and $\partial_y$ to be parallel and $\nabla \partial_z\coloneqq f\, \dif z\otimes\partial_z$ for some $f\in\cCi(\R^3)$ non-constant. Indeed, $ R\nb=\dif f\wedge\dif z\otimes\dif z\otimes \partial_z$.
\end{example}

\subsection{Relation to Courant algebroid actions}
We finish by drawing a relation between Courant algebroid lifts and \textit{Courant algebroid actions} \cite[Def. 2.11]{LiBlCAPG}.

\begin{definition}\label{def: ca-action}
 Let $(\mathbb{E},\la\,\,,\,\ra,\rho,[\,\,,\,])$ be a Courant algebroid over $M$. A \textbf{Courant algebroid action on a manifold} $M'$ is a pair $(\Phi,\varrho)$ consisting of $\Phi\colon M'\rightarrow M$ and an $\R$-linear map $\varrho\colon\Gamma(\mathbb{E})\rightarrow\mathfrak{X}(M')$ satisfying
 \begin{align*}
 \Phi_{*q}\varrho(a)&=\rho(a)_{\Phi(q)}, & \varrho(fa)&=(\Phi^*f)\varrho(a), & [\varrho(a),\varrho(b)]_\text{Lie}&=\varrho([a,b]).
 \end{align*}
 The \textbf{stabilizer} of the action at a point $q\in M'$ is the kernel of the map
 \begin{align*}
 \varrho_q\colon \mathbb{E}_{\Phi(q)}&\rightarrow T_qM'\\
 \text{a}&\mapsto\varrho(a)_q,
 \end{align*}
 where $a$ is an arbitrary section of $\mathbb{E}$ such that $a_{\Phi(q)}=\text{a}$.
\end{definition}

The following theorem shows that Courant algebroid lifts offer a large class of examples of Courant algebroid actions.

\begin{theorem}\label{thm: nabla-ca-action}
Let $(\mathbb{E},\la\,\,,\,\ra,\rho,[\,\,,\,])$ such that $\mathbb{E}=TM\oplus E$ for some vector bundle $\pr\colon E\rightarrow M$ be a Courant algebroid, and let $\nabla$ be a vector bundle connection on $E$. The pair $(\pr,\rho\nb\circ\phi )$ is a Courant algebroid action of $(\mathbb{E},\la\,\,,\,\ra,\rho,[\,\,,\,])$ on $E$ if and only if the corresponding Courant algebroid lift is a Courant algebroid. If this is the case, the stabilizer of the action at an arbitrary point of $E$ is coisotropic. 
\end{theorem}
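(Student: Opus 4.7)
The plan is to unpack the three conditions of Definition \ref{def: ca-action} for the map $\varrho\coloneqq\rho\nb\circ\phi\colon\Gamma(\mathbb{E})\rightarrow\mathfrak{X}(E)$ and to show that the only non-trivial one is precisely the hypothesis of Theorem \ref{thm: curved-ca}. Because $\varrho(a)=\rho\nb(\phi a)=\rho(a)^\text{h}$, the compatibility $\pr_{*q}\varrho(a)=\rho(a)_{\pr(q)}$ is nothing but the defining property of the horizontal lift, and $\varrho(fa)=(\pr^*f)\varrho(a)$ follows from the $\cCi(M)$-linearity of $\rho$ together with $(fX)^\text{h}=(\pr^*f)X^\text{h}$ (Lemma \ref{lem: hor-ver}). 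Both conditions hold automatically and require no extra hypothesis on $\nabla$.

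For the bracket compatibility $[\varrho(a),\varrho(b)]_\text{Lie}=\varrho([a,b])$, I would invoke Lemma \ref{lem: hor-ver-com}, already used in the proof of Proposition \ref{prop: ca-lift-curved-ca}, to write
\begin{equation*}
[\rho(a)^\text{h},\rho(b)^\text{h}]_\text{Lie}=[\rho(a),\rho(b)]^\text{h}_\text{Lie}+R\nb(\rho(a),\rho(b))^\upsilon.
\end{equation*}
As the original tuple is a Courant algebroid, property \ref{ca6} gives $\rho([a,b])^\text{h}=[\rho(a),\rho(b)]^\text{h}_\text{Lie}$. Thanks to the direct sum $TE=\mathcal{H}\nb\oplus\mathcal{V}$ and the injectivity of the vertical lift of endomorphisms, the bracket condition thus decouples and is equivalent to $R\nb(\rho(a),\rho(b))=0$. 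Applied to the (in particular, curved) Courant algebroid $(\mathbb{E},\la\,\,,\,\ra,\rho,[\,\,,\,\,])$, Theorem \ref{thm: curved-ca} says that this is exactly the condition under which the Courant algebroid lift is Courant, which yields the if-and-only-if part of the statement.

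For the coisotropy of the stabilizer, I would observe that the map $\varrho_q\colon\mathbb{E}_{\pr(q)}\rightarrow T_qE$ of Definition \ref{def: ca-action} equals $\phi^q\circ\rho_{\pr(q)}$; since $\phi^q$ is a vector space isomorphism by \eqref{eq: vs-iso}, the stabilizer at $q$ is $\ker\rho_{\pr(q)}$. As every Courant algebroid satisfies \ref{ca13} (Corollary \ref{cor: inclusions}), Proposition \ref{prop: coisotropic} gives that $\ker\rho_{\pr(q)}$ is coisotropic with respect to $\la\,\,,\,\ra$. I do not foresee any conceptual obstacle: the whole argument is a reorganization of identities already controlled by Proposition \ref{prop: ca-lift-curved-ca} and Theorem \ref{thm: curved-ca}. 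The only delicate bookkeeping point is cleanly separating the horizontal and vertical components in the bracket-compatibility computation, which is handled by the injectivity of the vertical lift $(\,)^\upsilon$.
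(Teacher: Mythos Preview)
Your proposal is correct and follows essentially the same route as the paper's proof: the first two action axioms hold automatically, the bracket axiom is reduced via Lemma~\ref{lem: hor-ver-com} and \ref{ca6} to $R\nb(\rho(a),\rho(b))=0$ and matched with Theorem~\ref{thm: curved-ca}, and the stabilizer is identified with $\ker\rho_{\pr(q)}$ and shown coisotropic via Proposition~\ref{prop: coisotropic}. The only slip is a sign in the curvature term (Lemma~\ref{lem: hor-ver-com} gives $-R\nb(\rho(a),\rho(b))^\upsilon$, not $+$), which is immaterial to the conclusion.
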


\begin{proof}
By \eqref{eq: rho-nb-hor} and Lemma \ref{lem: hor-ver}, we obtain that
\begin{align*}
\pr_{*q}\rho\nb(\phi a)&=\pr_{*q}\rho(a)^\text{h}=\rho(a)_{\pr(q)},\\
 \rho\nb(\phi (fa))&=\rho(fa)^\text{h}=(\pr^*f)\rho(a)^\text{h}=(\pr^*f)\rho\nb(\phi a),
\end{align*}
that is, the pair $(\pr,\rho\nb\circ\phi )$ satisfies the first two axioms for a Courant algebroid action on $E$. By \eqref{eq: rho-nb-hor}, the last axiom takes the form:
\begin{equation*}
 0=\rho\nb(\phi [a,b])-[\rho\nb(\phi a),\rho\nb(\phi b)]_\text{Lie}=\rho([a,b])^\text{h}-[\rho(a)^\text{h},\rho(b)^\text{h}]_\text{Lie},
\end{equation*}
that is, since $(\mathbb{E},\la\,\,,\,\ra,\rho,[\,\,,\,])$ satisfies \ref{ca6} and Lemma \ref{lem: hor-ver-com}, equivalent to
\begin{equation*}
 0=[\rho(a),\rho(b)]_\text{Lie}^\text{h}-[\rho(a)^\text{h},\rho(b)^\text{h}]_\text{Lie}=R\nb(\rho(a),\rho(b))^\upsilon
\end{equation*}
The result follows directly from the injectivity of $(\;)^\upsilon$ and Theorem \ref{thm: curved-ca}.

As $\varrho=\rho\nb\circ\phi$, we have that $\varrho_q=\rho\nb\circ\phi ^q$ for every $q\in E$ and, by \eqref{eq: kernels}, the stabilizer of the action at $q$ is thus $\ker\rho_{\pr(q)}$. Proposition \ref{prop: coisotropic} and the fact that $(\mathbb{E},\la\,\,,\,\ra,\rho,[\,\,,\,])$ is a Courant algebroid give that all stabilizers are coisotropic.
\end{proof}

The ‘only if’ part of the theorem matches \cite[Thm. 2.12]{LiBlCAPG} dealing with the question of when a Courant algebroid action $(\Phi,\varrho)$ of $(\mathbb{E},\la\,\,,\,\ra,\rho,[\,\,,\,])$ on a manifold $M'$ induces a Courant algebroid structure on the pull-back bundle $\Phi^*\mathbb{E}\rightarrow M'$. This Courant algebroid  structure is canonically isomorphic to the one given by the Courant algebroid lift construction on $TE$. 
The most interesting part of Theorem \ref{thm: nabla-ca-action} is the ‘if’ part, which provides a large class of examples of Courant algebroid actions. The following corollaries are direct consequences of Theorem \ref{thm: nabla-ca-action} applied to examples from Sections \ref{sec: PW-Ca} and \ref{sec: examples}.

\begin{corollary}
 A transitive Courant algebroid (regarded as $\mathbb{T}M\oplus\mathcal{G}$) acts on $E\coloneqq T^*M\oplus\mathcal{G}$ by flat connections on $E$.
\end{corollary}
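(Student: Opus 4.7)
The plan is to derive this corollary directly by chaining Theorem \ref{thm: nabla-ca-action} with Corollary \ref{cor: transitive-ca-lift}. A transitive Courant algebroid, via a dissection, takes the form $\mathbb{T}M\oplus\mathcal{G}$, whose underlying vector bundle is $TM \oplus E$ with $E \coloneqq T^*M \oplus \mathcal{G}$. This is exactly the shape required to apply Theorem \ref{thm: nabla-ca-action}: for any vector bundle connection $\nabla$ on $E$, the pair $(\pr,\rho\nb\circ\phi)$ is a Courant algebroid action of $\mathbb{T}M\oplus\mathcal{G}$ on $E$ if and only if the associated Courant algebroid lift on $TE$ is itself a Courant algebroid.

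The next step is to pin down when that lift is Courant, and this is the precise content of Corollary \ref{cor: transitive-ca-lift}: the lift of a transitive Courant algebroid $\mathbb{T}M\oplus\mathcal{G}$ by a connection $\nabla$ on $E$ is Courant if and only if $\nabla$ is flat. Combining the two equivalences, every flat connection $\nabla$ on $E$ yields a Courant algebroid action $(\pr,\rho\nb\circ\phi)$ of $\mathbb{T}M\oplus\mathcal{G}$ on $E$, which is the content of the statement (read as parametrising actions by the flat connections).

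The one point that deserves acknowledgement, rather than being an obstacle, is independence of the dissection: the presentation $\mathbb{T}M\oplus\mathcal{G}$ is not canonical, but the remark following Corollary \ref{cor: transitive-ca-lift} guarantees that any two dissections are isomorphic as Courant algebroids and that their corresponding lifts are also isomorphic. Consequently, the constructed action descends to an action of the intrinsic transitive Courant algebroid, concluding the argument. Since both results being invoked are already established in the paper, there is no computational obstacle here; the corollary is essentially a dictionary entry translating the lift picture of Section \ref{sec: transitive} into the action picture of Theorem \ref{thm: nabla-ca-action}.
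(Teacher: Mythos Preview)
Your proposal is correct and matches the paper's own approach: the paper presents this corollary (and the subsequent ones) as a direct consequence of Theorem~\ref{thm: nabla-ca-action} applied to the examples of Sections~\ref{sec: PW-Ca} and~\ref{sec: examples}, which in the transitive case is precisely the combination with Corollary~\ref{cor: transitive-ca-lift} that you describe. Your remark on independence of the dissection is also consistent with the paper's own remark following Corollary~\ref{cor: transitive-ca-lift}.
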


In particular, we have that:

\begin{corollary}\label{cor: exact-action}
 An exact Courant algebroid (regarded as $\mathbb{T}M$) acts on $T^*M$ by flat affine connections on $M$.
\end{corollary}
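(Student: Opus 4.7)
The plan is to derive this as an immediate specialization of Theorem \ref{thm: nabla-ca-action} combined with Theorem \ref{thm: curved-ca}, exploiting the surjectivity of the anchor in the exact case. First, I would note that an exact Courant algebroid on $M$ can (by definition, or via the classification in Proposition \ref{prop: curved-class}) be taken to sit on the bundle $\mathbb{T}M = TM \oplus T^*M$ with anchor $\rho = \pr_{TM}$, so it fits the hypothesis of Theorem \ref{thm: nabla-ca-action} with $E = T^*M$. Under the canonical duality, vector bundle connections on $T^*M$ are in bijection with affine connections on $M$, and the condition ``flat on the image of $\rho$'' translates to the same condition on the original affine connection. So the setup of Theorem \ref{thm: nabla-ca-action} specialized to this case amounts exactly to an affine connection $\nabla$ on $M$.

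Next, I would invoke Theorem \ref{thm: nabla-ca-action} to conclude that $(\pr, \rho\nb \circ \phi\nb)$ is a Courant algebroid action of $\mathbb{T}M$ on $T^*M$ if and only if the Courant algebroid lift is a Courant algebroid, and by Theorem \ref{thm: curved-ca} the latter holds iff $R\nb(\rho(a), \rho(b)) = 0$ for all $a, b \in \Gamma(\mathbb{T}M)$. The key simplification, which is the whole point of the corollary, is that $\rho = \pr_{TM}$ is surjective, so the set $\{(\rho(a), \rho(b)) : a, b \in \Gamma(\mathbb{T}M)\}$ equals $\mathfrak{X}(M) \times \mathfrak{X}(M)$. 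Hence the pointwise vanishing of $R\nb$ on $\im \rho \times \im \rho$ collapses to $R\nb \equiv 0$, i.e., flatness of $\nabla$.

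I do not expect any genuine obstacle: the result is essentially a translation of Theorem \ref{thm: nabla-ca-action} in the transitive case, where ``flat on the image of the anchor'' means ``flat''. The only minor bookkeeping items are (i) verifying that the exact Courant algebroid axioms (in particular \ref{ca3}) are assumed in Theorem \ref{thm: nabla-ca-action} and are satisfied by any exact Courant algebroid by definition, and (ii) recording that the coisotropy of the stabilizer at any point of $T^*M$ is inherited for free from the last sentence of Theorem \ref{thm: nabla-ca-action}, so no separate verification is needed.
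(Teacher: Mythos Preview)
Your proposal is correct and follows essentially the same approach as the paper: the corollary is stated as an immediate specialization of Theorem \ref{thm: nabla-ca-action} (via the transitive case) to the situation $\mathbb{E}=\mathbb{T}M$, $E=T^*M$, where surjectivity of $\rho=\pr_{TM}$ reduces the condition $R\nb(\rho(a),\rho(b))=0$ to flatness of $\nabla$. The paper does not spell out a separate proof beyond noting this specialization, so your write-up matches its intended argument.
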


\begin{corollary}\label{cor: special-complex-action}
 The $B_n$-Courant algebroid (Section \ref{sec: Bn}) acts on \mbox{$T^*M\oplus (M\times\R)$} by special complex structures on $M$.
\end{corollary}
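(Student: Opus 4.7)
The plan is to combine Corollary \ref{cor: special-complex} with Theorem \ref{thm: nabla-ca-action}, both of which are already available. Given a special complex structure $(J,\nabla)$ on $M$, that is, an almost complex structure $J$ together with a torsion-free flat connection $\nabla$ satisfying $\dif^{\scalebox{0.45}{$\nabla$}} J=0$, I would first invoke formula \eqref{eq: Bn-special} to produce the vector bundle connection $\nabla^{\s{$J$}}$ on $E=T^*M\oplus(M\times\R)$. This is precisely the connection for which Corollary \ref{cor: special-complex} was designed.

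Next, I would apply Corollary \ref{cor: special-complex} to conclude that the Courant algebroid lift of the $B_n$-Courant algebroid by $\nabla^{\s{$J$}}$ is a Courant algebroid. At this point the desired action is essentially free: Theorem \ref{thm: nabla-ca-action} asserts the equivalence between the lift being a Courant algebroid and the pair $(\pr,\rho\nb\circ\phi)$ being a Courant algebroid action of the $B_n$-Courant algebroid on $E=T^*M\oplus(M\times\R)$. So I would simply read off the action from this equivalence, noting that the underlying map $\Phi=\pr\colon E\to M$ is the natural projection and $\varrho=\rho\nb\circ\phi$ is built from the horizontal and vertical lifts induced by $\nabla^{\s{$J$}}$.

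There is no genuine obstacle here, since both key ingredients have been proved. The only thing worth emphasising is that the statement should be read as saying that every special complex structure $(J,\nabla)$ on $M$ produces such an action, and that, by the last sentence of Theorem \ref{thm: nabla-ca-action}, the stabilizer of this action at every point of $E$ is automatically coisotropic, since the $B_n$-Courant algebroid is a genuine Courant algebroid.
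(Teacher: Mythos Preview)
Your proposal is correct and follows exactly the paper's approach: the corollary is stated there as a direct consequence of Theorem \ref{thm: nabla-ca-action} applied to the example in Section \ref{sec: Bn}, i.e., combining Corollary \ref{cor: special-complex} with Theorem \ref{thm: nabla-ca-action}. The additional remark about coisotropic stabilizers is a bonus but also immediate from the same theorem.
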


On the other hand, we have also that:

\begin{corollary}\label{cor: Poisson-action}
 The Courant algebroid induced by a Poisson structure on $M$ (Section \ref{sec: Poisson}) acts on $T^*M$ by affine connections on $M$ that are flat when restricted to Hamiltonian vector fields.
\end{corollary}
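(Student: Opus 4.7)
The plan is to derive this corollary as a direct application of Theorem \ref{thm: nabla-ca-action} to the specific Courant algebroid described in Section \ref{sec: Poisson}. The starting data fits the hypothesis of that theorem exactly: the Courant algebroid induced by a Poisson structure $\pi$ lives on $\mathbb{T}M=TM\oplus T^*M$, so taking $E=T^*M$ we recover the decomposition $\mathbb{E}=TM\oplus E$, and a vector bundle connection on $E=T^*M$ is precisely the dual of an affine connection on $M$.

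First I would invoke Theorem \ref{thm: nabla-ca-action}, which asserts that $(\pr,\rho\nb\circ\phi)$ is a Courant algebroid action on $E=T^*M$ if and only if the corresponding Courant algebroid lift is Courant. The second step is to recall the computation already carried out just before equation \eqref{eq: Poisson-curvature}: using Theorem \ref{thm: curved-ca} together with the explicit form of the anchor $\rho=\pi\circ \pr_{T^*M}$, the lift is a Courant algebroid precisely when $R\nb(\pi(\alpha),\pi(\beta))=0$ for all $\alpha,\beta\in\Omega^1(M)$, which is the condition that $\nabla$ is flat when restricted to the Hamiltonian vector fields $\im\pi\subseteq TM$. (One should also note that the Poisson-induced tuple is itself a Courant algebroid, so the ``original tuple is Courant'' hypothesis in Theorem \ref{thm: curved-ca} is automatic.)

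Chaining these two equivalences gives the corollary. There is no real obstacle: the corollary is essentially a translation of the previously established flatness-on-Hamiltonians characterization of when the Poisson lift is Courant into the language of Courant algebroid actions, and both the translation (Theorem \ref{thm: nabla-ca-action}) and the characterization (the discussion following \eqref{eq: Poisson-curvature}) are already in hand. The only mild subtlety worth a single sentence is pointing out why the anchor of the Poisson Courant algebroid has image equal to the Hamiltonian vector fields, so that the vanishing condition $R\nb(\rho(a),\rho(b))=0$ from Theorem \ref{thm: curved-ca} becomes exactly flatness on $\im\pi$.
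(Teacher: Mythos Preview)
Your proposal is correct and follows exactly the approach the paper intends: the corollary is stated as a direct consequence of Theorem \ref{thm: nabla-ca-action} applied to the example of Section \ref{sec: Poisson}, and your argument spells out precisely that application, invoking Theorem \ref{thm: curved-ca} and the computation around \eqref{eq: Poisson-curvature} to identify the flatness condition as flatness on Hamiltonian vector fields.
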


\appendix

\section{Some proofs of Sections \ref{sec: hierarchy} and \ref{sec: exact-curved}}\label{app:proofs}

\begin{proof}[Proof of Lemma \ref{lem: ca-4-5}]
 By \ref{ca2}, $ \rho(a)\la b,fc\ra=\la[a,b],fc\ra+\la b,[a,fc]\ra$, hence
 \begin{equation*}
 (\rho(a)f)\la b,c\ra+f(\rho(a)\la b,c\ra)=f\la[a,b],c\ra+\la b,[a,fc]\ra.
 \end{equation*}
 Using \ref{ca2} once more yields $ \la b,(\rho(a)f)c+f[a,c]\ra=\la b,[a,fc]\ra$ and the \mbox{non-deg}eneracy of the pairing proves \ref{ca4}.
 
 By \ref{ca1}, we have that
 \begin{equation*}
 [a,fb]+[fb,a]=\rho^*\dif\la a,fb\ra=f\rho^*\dif\la a,b\ra+\la a,b\ra\rho^*\dif f.
 \end{equation*}
 By \ref{ca4},
 \begin{equation*}
 [fb,a]=-(\rho(a)f)b-f[a,b]+f\rho^*\dif\la a,b\ra+\la a,b\ra\rho^*\dif f.
 \end{equation*}
 Using \ref{ca1} once more, we get that $[fb,a]=-(\rho(a)f)b+f[b,a]+\la a,b\ra\rho^*\dif f$.
\end{proof}

We prove here a helpful technical lemma.

\begin{lemma}\label{lem: technical}
 For a metric algebroid $(E,\la\,\,,\,\ra,\rho,[\,\,,\,])$ and $F$ as in \eqref{eq: ante-curvature}, we have
 \begin{align*}
 \la b,[a,\rho^*\alpha]\ra
 &=\la b,\rho^*L_{\rho(a)}\alpha\ra-\alpha(F(a,b)), & [\rho^*\alpha,a]&=-[a,\rho^*\alpha]+\rho^*\dif(\alpha(\rho(a))).
 \end{align*}
\end{lemma}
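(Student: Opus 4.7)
The plan is to prove both identities by direct computation from axioms \ref{ca1} and \ref{ca2} of a metric algebroid, combined with the basic identity $2\la\rho^*\alpha,c\ra=\alpha(\rho(c))$, which follows immediately from the definition $\rho^*=\frac{1}{2}\la\,\,,\,\ra^{-1}\circ\rho^t$.

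For the first identity, I would apply \ref{ca2} to the triple $(a,b,\rho^*\alpha)$, obtaining
\[
\rho(a)\la b,\rho^*\alpha\ra=\la[a,b],\rho^*\alpha\ra+\la b,[a,\rho^*\alpha]\ra.
\]
Using the identity above on both pairings involving $\rho^*\alpha$, the left-hand side becomes $\tfrac{1}{2}\rho(a)(\alpha(\rho(b)))$ and the first term on the right becomes $\tfrac{1}{2}\alpha(\rho([a,b]))$. I would then expand $\la b,\rho^*L_{\rho(a)}\alpha\ra=\tfrac{1}{2}(L_{\rho(a)}\alpha)(\rho(b))$ using the classical formula $(L_X\alpha)(Y)=X(\alpha(Y))-\alpha([X,Y]_\text{Lie})$. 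Subtracting the two expressions cancels the common term $\tfrac{1}{2}\rho(a)(\alpha(\rho(b)))$ and leaves
\[
\la b,[a,\rho^*\alpha]\ra-\la b,\rho^*L_{\rho(a)}\alpha\ra=-\tfrac{1}{2}\alpha\bigl(\rho([a,b])-[\rho(a),\rho(b)]_\text{Lie}\bigr)=-\tfrac{1}{2}\alpha(F(a,b)),
\]
which is the claim.

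For the second identity, I would polarise \ref{ca1} to obtain the antisymmetry-defect formula $[u,v]+[v,u]=2\rho^*\dif\la u,v\ra$ for any two sections. Substituting $u=\rho^*\alpha$ and $v=a$, and applying $2\la\rho^*\alpha,a\ra=\alpha(\rho(a))$, the right-hand side simplifies to $\rho^*\dif(\alpha(\rho(a)))$ and the identity follows at once.

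No deep obstacle is expected; the entire argument is essentially bookkeeping. The only care needed is to track the factors of $\tfrac{1}{2}$ introduced by the normalisation of $\rho^*$ and to correctly identify the combination $\rho([a,b])-[\rho(a),\rho(b)]_\text{Lie}$ as $F(a,b)$.
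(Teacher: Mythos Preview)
Your proposal is correct and follows essentially the same route as the paper: apply \ref{ca2} to the triple $(a,b,\rho^*\alpha)$, use $2\la\rho^*\alpha,c\ra=\alpha(\rho(c))$ and the formula for $L_{\rho(a)}\alpha$ to identify the defect as $-\tfrac{1}{2}\alpha(F(a,b))$, and then polarise \ref{ca1} for the second identity. The only cosmetic difference is that the paper computes $\la b,[a,\rho^*\alpha]\ra$ directly in one chain of equalities rather than subtracting two expressions, but the content is identical.
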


\begin{proof}
 By \ref{ca2}, we get
 \begin{align*}
 \la b,[a,\rho^*\alpha]\ra
 &=-\la[a,b],\rho^*\alpha\ra+\rho(a)\la b,\rho^*\alpha\ra=-\alpha(\rho([a,b]))+\rho(a)\alpha(\rho(b))\\
 &=-\alpha(F(a,b)+[\rho(a),\rho(b)]_\text{Lie})+\rho(a)\alpha(\rho(b)) \\
 &=(L_{\rho(a)}\alpha)(\rho(b))-\alpha(F(a,b))=\la b,\rho^*L_{\rho(a)}\alpha\ra-\alpha(F(a,b)).
 \end{align*}
 Whereas \ref{ca1} gives that
 \begin{equation*}
 [\rho^*\alpha,a]=-[a,\rho^*\alpha]+\rho^*\dif\la a,\rho^*\alpha\ra=-[a,\rho^*\alpha]+\rho^*\dif(\alpha(\rho(a))).
 \end{equation*}
\end{proof}

\begin{proof}[Proof of Lemma \ref{lem: ca-7-8}]
It follows directly from Lemma \ref{lem: technical} and that a metric algebroid is pre-Courant if and only if $F=0$.
\end{proof}

\begin{proof}[Proof of Lemma \ref{lem: ca-13}]
Let $(E,\la\,\,,\,\ra,\rho,[\,\,,\,])$ be a metric algebroid satisfying \ref{ca13}. By \ref{ca1} and \ref{ca13}, we get that $F$ is skew-symmetric:
\begin{equation*}
 F(a,a)=\rho([a,a])=\frac{1}{2}\rho(\rho^*\dif\la a,a\ra)=0.
\end{equation*}
The $\cCi(M)$-bilinearity of $F$ then follows straightforwardly from \ref{ca4}.

On the other hand, let $(E,\la\,\,,\,\ra,\rho,[\,\,,\,])$ be an ante-Courant algebroid. By \ref{ca1},
 \begin{equation*}
 \rho(\rho^*\dif\la a,b\ra)=\rho([a,b]+[b,a])=[\rho(a),\rho(b)]_\text{Lie}+F(a,b)+[\rho(b),\rho(a)]_\text{Lie}+F(b,a)=0.
 \end{equation*}
Therefore, we have the following:
 \begin{equation*}
 0=\rho(\rho^*\dif\la a,fb\ra)=\la a,b\ra\rho(\rho^*\dif f)+f\rho(\rho^*\dif\la a,b\ra)=\la a,b\ra\rho(\rho^*\dif f).
 \end{equation*}
By the fact that exact $1$-forms locally generate $\Omega^1(M)$, we arrive at $\rho\circ\rho^*=0$.
\end{proof}

\begin{proof}[Proof of Proposition \ref{prop: coisotropic}]
 The definition of $\rho^*$ yields $\la \rho^*\alpha,\rho^*\beta\ra=\alpha(\rho(\rho^*\beta))$, which shows that \ref{ca13} is clearly equivalent to $\im\rho^*$ being pointwise isotropic. On the other hand, for all $m\in M$ and $\text{a}\in\ker\rho_m$ we have that $\la \text{a},\rho^*\zeta\ra=\zeta(\rho(\text{a}))=0$ for all $\zeta\in T^*_mM$, that is, $\im\rho^*_m\leq (\ker\rho_m)^\perp$ and by the dimensional reasons we thus have $\im\rho^*_m=(\ker\rho_m)^\perp$. As $\im\rho^*_m$ being isotropic means that $\im\rho^*_m\leq (\im\rho^*)^\perp$ and $\ker\rho_m$ being coisotropic means that $(\ker\rho_m)^\perp\leq \ker\rho_m$, the result follows.
\end{proof}

\begin{proof}[Proof of Lemma \ref{lem: ca-10-11}]
 It follows directly from Lemma \ref{lem: technical}.
\end{proof}

\begin{proof}[Proof of Lemma \ref{lem: reg-ca-10-11}]
 By Proposition \ref{prop: reg-trans-curved-ca} and Lemma \ref{lem: technical}, we get that $(\mathbb{E},\la\,\,,\,\ra,\rho,[\,\,,\,])$ is curved Courant if and only if there is $T\in\Omega^2(M,TM)$ such~that
 \begin{equation*}
 \la b,[a,\rho^*\alpha]\ra
     =\la b,\rho^*L_{\rho(a)}\alpha\ra-\alpha(T(\rho(a),\rho(b)))=\la b,\rho^*(L_{\rho(a)}-(\iota_{\rho(a)}T)^t)\alpha\ra,
 \end{equation*}
 hence the equivalence to the first statement follows. The equivalence to the second statement follows immediately from Lemma \ref{lem: technical}.
\end{proof}

\begin{proof}[Proof of Proposition \ref{prop: curved-class}] 
 Let $(\mathbb{E},\la\,\,,\,\ra,\rho,[\,\,,\,])$ be an exact curved Courant algebroid over $M$ with the curvature $T\in\Omega^2(M,TM)$ and choose an isotropic splitting $s\colon TM\rightarrow \mathbb{E}$ of the short exact sequence \eqref{eq: ex-seq}. We define the vector bundle morphism $\Phi\colon \mathbb{T}M\rightarrow \mathbb{E}$ over the identity by
 \begin{equation*}
 \Phi(u+\zeta)\coloneqq s(u)+\rho^*\zeta,
 \end{equation*}
 for $u+\zeta\in\mathbb{T}M$. As $\mathbb{E}=\im s\oplus \im\rho^*$, and, moreover, both $s$ and $\rho^*$ are injective, we have that $\Phi$ is a vector bundle isomorphism. It follows by the isotropy of the splitting, \ref{ca1} and \ref{ca2} that $H\in\Omega^3(M)$ is defined by the formula
\begin{equation*}
 H(X,Y,Z)\coloneqq\la [s(X),s(Y)],s(Z)\ra.
\end{equation*}

Let us now show that $\Phi$ is indeed an isomorphism between $(\mathbb{T}M,\la\,\,,\,\ra_+,\pr_{TM},[\,\,,\,]_{\s{$H$}}^{\s{$T$}})$ and the curved Courant algebroid we started with. By the isotropy of $s$ and $\im\rho^*$ (the latter follows from Proposition \ref{prop: coisotropic}),
\begin{equation*}
 \la \Phi(X+\alpha),\Phi(X+\alpha)\ra=\la s(X)+\rho^*\alpha,s(X)+\rho^*\alpha\ra=2\la s(X),\rho^*\alpha\ra.
\end{equation*}
By the definition of $\rho^*$ and that $s$ is a splitting, we get 
\begin{equation*}
 \la \Phi(X+\alpha),\Phi(X+\alpha)\ra=2\alpha(\rho(s(X)))=2\alpha(X)=\la X+\alpha,X+\alpha\ra_+,
\end{equation*}
hence, by polarization, $\Phi$ intertwines the pairings. Analogously, we have
\begin{equation*}
 \rho(\Phi(X+\alpha))=\rho(s(X)+\rho^*(\alpha))=X=\pr_{TM}(X+\alpha)
\end{equation*}
for the anchors. Finally, the linearity of the bracket gives that
\begin{align*}
 [\Phi(X+\alpha),\Phi(Y+\beta)]=[s(X),s(Y)]+[s(X),\rho^*\beta]+[\rho^*\alpha,s(Y)]+[\rho^*\alpha,\rho^*\beta].
\end{align*}
By Lemma \ref{lem: reg-ca-10-11} and the fact that $s$ is a splitting, we get
\begin{align*}
 [s(X),\rho^*\beta]&=\rho^*(L_{\rho(s(X))}-(\iota_{\rho(s(X))}T)^t)\beta=\Phi(L_X-(\iota_XT)^t)\beta=\Phi(L^{\s{$T$}}_X\beta),\\
 [\rho^*\alpha,s(Y)]&=\rho^*(-\iota_{\rho(s(Y))}\circ\dif+(\iota_{\rho(s(Y))}T)^t)\alpha=\Phi(-\iota_Y\circ\dif+(\iota_YT)^t)\alpha\\
 &=\Phi(-\iota_Y\dif^{\s{$T$}}\alpha).
\end{align*}
Lemma \ref{lem: reg-ca-10-11} together with \ref{ca13} yield $[\rho^*\alpha,\rho^*\beta]=0$. It remains to deal with the term $[s(X),s(Y)]$. As $\Phi$ is a vector bundle isomorphism, there is a unique $Z+\eta\in\Gamma(\mathbb{T}M)$ such that $[s(X),s(Y)]=\Phi(Z+\eta)=s(Z)+\rho^*\eta$. By \ref{ca9},
\begin{align*}
 \rho([s(X),s(Y)])&=[\rho(s(X)),\rho(s(Y))]_\text{Lie}+T(\rho(s(X)),\rho(s(Y)))\\
 &=[X,Y]_\text{Lie}+T(X,Y)=[X,Y]^{\s{$T$}},
\end{align*}
hence $Z=[X,Y]^{\s{$T$}}$. On the other hand, the isotropy of $s$ implies that
\begin{equation*}
 H(X,Y,Z)=\la [s(X),s(Y)],s(Z)\ra=\la\rho^*\eta,s(Z)\ra=\eta(\rho(s(Z)))=\eta(Z),
\end{equation*}
that is, $\eta=\iota_Y\iota_XH$. Altogether, we have proved that
\begin{equation*}
 [\Phi(X+\alpha),\Phi(Y+\beta)]=\Phi([X+\alpha,Y+\beta]_{\s{$H$}}^{\s{$T$}})
\end{equation*}
and the result follows.
\end{proof}

\begin{proof}[Proof of Lemma \ref{lem: curved-class}]
 Every isomorphism between two metric algebroids $(\mathbb{T}M,\la\,\,,\,\ra_+,\pr_{TM},[\,\,,\,])$ and $(\mathbb{T}M,\la\,\,,\,\ra_+,\pr_{TM},[\,\,,\,]')$ is necessarily of the form of a $B$-transform, that is, $\e^B\colon (X+\alpha)\mapsto X+\alpha+\iota_XB$ for some $B\in\Omega^2(M)$, see e.g. \cite{GuaGCG}. If, in addition, we have that $[\,\,,\,]=[\,\,,\,]_{\s{$H$}}^{\s{$T$}}$ and $[\,\,,\,]'=[\,\,,\,]_{\s{$H'$}}^{\s{$T'$}}$, the $B$-transform $\e^B$ is an isomorphism if and only if
 \begin{equation*}
 [\e^B(X+\alpha),\e^B(Y+\beta)]_{\s{$H'$}}^{\s{$T'$}}=\e^B([X+\alpha,Y+\beta]^{\s{$T$}}_{\s{$H$}}).
 \end{equation*}
 Explicitly, we have that
 \begin{equation}\label{eq: curved-ca-class}
 \begin{split}
 [X,Y]^{\s{$T'$}}+L^{\s{$T'$}}_X\beta-\iota_Y&\dif^{\s{$T'$}}\alpha+L_X^{\s{$T'$}}\iota_YB-\iota_Y\dif^{\s{$T'$}}\iota_XB+\iota_Y\iota_XH'\\
 &=[X,Y]^{\s{$T$}}+L^{\s{$T$}}_X\beta-\iota_Y\dif^{\s{$T$}}\alpha+\iota_Y\iota_XH+\iota_{[X,Y]^{\s{$T$}}}B.
 \end{split}
 \end{equation}
 The vector field part of the equation reads as $[X,Y]^{\s{$T'$}}=[X,Y]^{\s{$T$}}$, which is clearly equivalent to $T=T'$. Using this on the $1$-form part of \eqref{eq: curved-ca-class} yields
 \begin{equation*}
 L_X^{\s{$T$}}\iota_YB-\iota_Y\dif^{\s{$T$}}\iota_XB+\iota_Y\iota_XH'=\iota_Y\iota_XH+\iota_{[X,Y]^{\s{$T$}}}B.
 \end{equation*}
 Equivalently,
 \begin{equation*}
 \iota_Y\iota_X(H'-H)=(\iota_{[X,Y]^{\s{$T$}}}-L_X^{\s{$T$}}\iota_Y+\iota_Y\dif^{\s{$T$}}\iota_X)B=\iota_Y(-L^{\s{$T$}}_X+\dif^{\s{$T$}}\iota_X)B=-\iota_Y\iota_X\dif^{\s{$T$}} B
 \end{equation*}
 and the result follows.
\end{proof}

\section{Various lifts to the total space of a vector bundle}\label{app: lifts}
A vector bundle connection $\nabla$ on a vector bundle $\pr\colon E\rightarrow M$ yields a decomposition into the horizontal subbundle $\mathcal{H}\nb$ and the vertical subbundle $\mathcal{V}$:
\begin{equation*}
 TE= \mathcal{H}\nb\oplus\mathcal{V}.
\end{equation*}
The canonical isomorphisms $\mathcal{H}\nb\cong \pr^*TM$ and $\mathcal{V}\cong \pr^*E$ determine, via pull-back, the \textbf{horizontal and vertical lifts}:
\begin{align*}
 (\;)^\text{h}\colon \mathfrak{X}(M)&\rightarrow\Gamma(\mathcal{H}\nb)\subseteq\mathfrak{X}(E) & (\;)^\text{v}\colon \Gamma(E)&\rightarrow\Gamma(\mathcal{V})\subseteq\mathfrak{X}(E)\\
 X&\mapsto \pr^*X, & \varphi&\mapsto \pr^*\varphi,
\end{align*}
where $(\;)^\text{h}$ depends on the choice of $\nabla$, while $(\;)^\text{v}$ is canonical.

Consider a local chart $(U,\lbrace x^i\rbrace)$ on the base manifold $M$ and a local frame $(U,\lbrace e_\mu\rbrace)$ for $E$. In the corresponding local chart $(\mathcal{U}\coloneqq\rest{E}{U},\lbrace x^i\rbrace\cup\lbrace v^\mu\rbrace)$ on $E$, the horizontal lift of $X\in\mathfrak{X}(M)$ and the vertical lift of $\varphi\in\Gamma(E)$ are given by:
\begin{align*}
 &\rest{X}{U}=X^i\partial_{x^i}, & &\rest{X^\text{h}}{\mathcal{U}}=(\pr^*X^i)(\partial_{x^i}-v^\nu(\pr^*\Gamma^\mu_{\nu i})\partial_{v^\mu}),\\
 &\rest{\varphi}{U}=\varphi^\mu e_\mu, & &\rest{\varphi^\text{v}}{\mathcal{U}}=(\pr^*\varphi^\mu)\partial_{v^\mu},
\end{align*}
where $\lbrace\Gamma^\mu_{\nu j}\rbrace\subseteq\cCi(U)$ are uniquely determined by the relations $\Gamma^\mu_{\nu i}e_\mu=\nabla_{\partial_{x^i}} e_\nu$. 

We continue by introducing the \textbf{vertical lift of a dual section}
\begin{equation*}
 (\;)^v\colon \Gamma(E^*)\rightarrow\cCi(E)
\end{equation*}
by the formula $\xi^v(q)\coloneq \xi_{\pr(q)}(q)$ for all $q\in E$. Locally, we get
\begin{align*}
 \rest{\xi}{U}&=\xi_\mu e^\mu, & \rest{\xi^v}{\mathcal{U}}&=(\pr^*\xi_\mu) v^\mu,
\end{align*}
where $(U,\{e^\mu\})$ is the dual frame to $(U,\{e_\mu\})$. It is easy to check that the direct analogue of \cite[Prop. 1]{Patlift} is true.

\begin{proposition}\label{prop: char-vf}
A vector field on $E$ is fully determined by its action
on vertical lifts of all dual sections of $E$.
\end{proposition}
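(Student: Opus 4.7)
The plan is to show that the vanishing of the action on all vertical lifts of dual sections forces a vector field to be zero. Given two vector fields $V,W$ with $V(\xi^v)=W(\xi^v)$ for every $\xi\in\Gamma(E^*)$, set $U:=V-W$; the task reduces to proving $U\equiv 0$ under the hypothesis that $U(\xi^v)=0$ for every $\xi\in\Gamma(E^*)$. This is a local statement, so I would work in a chart on $E$.

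Pick a chart $(U,\{x^i\})$ on $M$ together with a local frame $\{e_\mu\}$ of $\rest{E}{U}$, and take the induced chart $(\mathcal{U},\{x^i\}\cup\{v^\mu\})$ on $\mathcal{U}:=\rest{E}{U}$. Write $U=U^i\,\partial_{x^i}+U^\mu\,\partial_{v^\mu}$. First I would plug in a global section of $E^*$ that coincides with the dual-frame element $e^\nu$ on a neighborhood of a chosen point of $\mathcal{U}$ (obtained by multiplying $e^\nu$ by a compactly supported bump on the base, which does not affect $\xi^v$ on that neighborhood); there $\xi^v=v^\nu$, so
\begin{equation*}
 0=U(\xi^v)=U^\nu.
\end{equation*}
Varying $\nu$ and the base-point gives $U^\mu\equiv 0$ throughout $\mathcal{U}$.

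With $U=U^i\,\partial_{x^i}$, I would next test against a section that agrees with $(\pr^*x^i)\,e^\nu$ on a neighborhood of a given point, so that $\xi^v=(\pr^*x^i)\,v^\nu$ there. The hypothesis then reads
\begin{equation*}
 0=U(\xi^v)=U^j\,(\pr^*\partial_{x^j}x^i)\,v^\nu=U^i\,v^\nu,
\end{equation*}
since $\partial_{x^j}x^i=\delta^i_j$ and the $\partial_{v^\mu}$-components of $U$ have already been shown to vanish. At every point outside the zero section some $v^\nu$ is nonzero, yielding $U^i=0$ pointwise there. Because the complement of the zero section is dense in $\mathcal{U}$ and $U^i$ is smooth, $U^i\equiv 0$ by continuity. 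Combining both steps gives $U\equiv 0$ on $\mathcal{U}$, and since the chart was arbitrary, on all of $E$.

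The only delicate point is at the zero section: the differentials $(\dif\xi^v)_q$ span only the vertical cotangent space there, so the horizontal components of $U_q$ cannot be read off pointwise from the hypothesis at such a $q$. This is the main (minor) obstacle, and it is resolved purely by invoking the smoothness of $U$ together with the density of the complement of the zero section in $E$.
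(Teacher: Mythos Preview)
Your argument is correct. The paper itself does not give a proof beyond the single sentence ``It is easy to check that the direct analogue of \cite[Prop.~1]{Patlift} is true,'' so your local-coordinate computation, together with the density/continuity step at the zero section, supplies precisely the details the paper leaves to the reader.
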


In particular, by using local coordinates, we immediately see that the horizontal and vertical lifts are fully determined by the following relations:
\begin{align}\label{eq: hor-ver}
X^\text{h}\xi^v&=(\nabla_X\xi)^v, & \varphi^\text{v}\xi^v&=\pr^*\xi(\varphi),
\end{align}
where we use the same symbol $\nabla$ for the dual connection on $E^*$.

Another useful lift is the \textbf{vertical lift of a field of endomorphisms}
\begin{equation*}
 (\;)^\upsilon\colon\Gamma(\en E)\rightarrow \mathfrak{X}(E)
\end{equation*}
defined by prescribing its action on the vertical lift of a dual section:
\begin{equation*}
 A^\upsilon\xi^v\coloneqq (A^t\xi)^v.
\end{equation*}
Locally, we have that
\begin{align*}
 \rest{A}{U}&=A^\mu_\nu\,e^\nu\otimes e_\mu, & &\rest{A^\upsilon}{\mathcal{U}}=v^\nu(\pr^*A^\mu_\nu)\partial_{v^\mu}.
\end{align*}

We finish by deriving several useful properties of these lifts.

\begin{lemma}\label{lem: hor-ver}
For $f\in\cCi(M)$, $X\in\mathfrak{X}(M)$ and $\varphi\in\Gamma(E)$,
 \begin{align*}
 (fX)^\emph{h}&=(\pr^*f) X^\emph{h},& (f\varphi)^\emph{v}&=(\pr^*f) \varphi^\emph{v}, & X^\emph{h}\pr^*f&=\pr^*(Xf), & \varphi^\emph{v}
\pr^*f&=0. 
\end{align*}
\end{lemma}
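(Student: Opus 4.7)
The plan is to prove the four identities by reducing each to an elementary consequence of either (i) the compatibility of the horizontal and vertical lifts with the projection $\pr_*$, or (ii) the defining relations \eqref{eq: hor-ver} combined with Proposition \ref{prop: char-vf}, which lets me check an identity of vector fields on $E$ by evaluating both sides on all $\xi^v$ with $\xi\in\Gamma(E^*)$.

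First I would dispose of the two identities on the right, which do not involve lifting a section. The horizontal subbundle is the image of $\pr^*TM$ under the canonical isomorphism, so $\pr_*(X^\text{h}_q)=X_{\pr(q)}$ for every $q\in E$. Since $\pr^*f=f\circ\pr$, the chain rule gives
\[
X^\text{h}_q(\pr^*f)=\pr_{*q}(X^\text{h}_q)(f)=X_{\pr(q)}(f)=\pr^*(Xf)_q,
\]
which is the third identity. The fourth follows from the same chain rule and the fact that $\varphi^\text{v}_q\in\mathcal{V}_q=\ker\pr_{*q}$, whence $\varphi^\text{v}_q(\pr^*f)=\pr_{*q}(\varphi^\text{v}_q)(f)=0$.

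For the second identity I would apply both sides to $\xi^v$ with $\xi\in\Gamma(E^*)$ arbitrary, and invoke Proposition \ref{prop: char-vf}. Using $\varphi^\text{v}\xi^v=\pr^*\xi(\varphi)$, $\cCi(M)$-linearity of $\xi$ in its section slot, and the multiplicativity of $\pr^*$,
\[
(f\varphi)^\text{v}\xi^v=\pr^*(\xi(f\varphi))=\pr^*(f\cdot\xi(\varphi))=(\pr^*f)\,\varphi^\text{v}\xi^v=\bigl((\pr^*f)\varphi^\text{v}\bigr)\xi^v,
\]
which forces $(f\varphi)^\text{v}=(\pr^*f)\varphi^\text{v}$. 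The first identity follows by the same strategy: using $X^\text{h}\xi^v=(\nabla_X\xi)^v$ together with the $\cCi(M)$-linearity of $\nabla$ in its vector-field slot,
\[
(fX)^\text{h}\xi^v=(\nabla_{fX}\xi)^v=(f\nabla_X\xi)^v=(\pr^*f)(\nabla_X\xi)^v=\bigl((\pr^*f)X^\text{h}\bigr)\xi^v,
\]
where the third equality uses the auxiliary fact $(f\eta)^v=(\pr^*f)\eta^v$ for $\eta\in\Gamma(E^*)$. This last fact is immediate from the pointwise definition $\eta^v(q)=\eta_{\pr(q)}(q)$, since $(f\eta)^v(q)=f(\pr(q))\,\eta_{\pr(q)}(q)=(\pr^*f)(q)\,\eta^v(q)$.

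There is no genuine obstacle here; all four identities are essentially formal. The only small point deserving care is the auxiliary identity $(f\eta)^v=(\pr^*f)\eta^v$ used in the final computation, which I would dispatch in a single line from the pointwise definition of the vertical lift of a dual section.
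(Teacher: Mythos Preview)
Your proof is correct. The paper, by contrast, simply says ``it follows by a straightforward calculation in a local chart'' and leaves the reader to check everything using the explicit coordinate expressions for $X^{\mathrm{h}}$ and $\varphi^{\mathrm{v}}$ displayed just above. Your approach is coordinate-free: you handle the third and fourth identities via the chain rule and the tautological facts $\pr_*X^{\mathrm{h}}=X$ and $\mathcal{V}=\ker\pr_*$, and you handle the first two by testing against $\xi^v$ and invoking Proposition~\ref{prop: char-vf} together with the defining relations~\eqref{eq: hor-ver}. This is in fact exactly the style of argument the paper itself adopts for the subsequent Lemma~\ref{lem: hor-ver-com}, so your route is arguably more in keeping with the surrounding exposition. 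The coordinate proof has the minor advantage of being entirely self-contained (no appeal to Proposition~\ref{prop: char-vf}), while yours is cleaner and makes the logical structure explicit; neither approach involves any real difficulty.
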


\begin{proof}
 It follows by a straightforward calculation in a local chart.
\end{proof}

We recall that every vector bundle connection has the associated curvature \mbox{$2$-form} $R\nb\in\Omega^2(M, \en E)$ given by $R\nb(X,Y)\varphi\coloneqq \nabla_X\nabla_Y\varphi-\nabla_Y\nabla_X\varphi-\nabla_{[X,Y]_\text{Lie}}\varphi$.

\begin{lemma}\label{lem: hor-ver-com}
 For $X,Y\in\mathfrak{X}(M)$ and $\varphi,\psi\in\Gamma(E)$,
 \begin{align*}
 [X^\emph{h}, Y^\emph{h}]_\emph{Lie}&=[X,Y]_\emph{Lie}^\emph{h}-R\nb(X,Y)^\upsilon, & [X^\emph{h}, \varphi^\emph{v}]_\emph{Lie}&=(\nabla_X\varphi)^\emph{v}, & [\varphi^\emph{v}, \psi^\emph{v}]_\emph{Lie}&=0.
\end{align*}
\end{lemma}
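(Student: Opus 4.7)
The plan is to use Proposition \ref{prop: char-vf}: both sides of each identity are vector fields on $E$, so it suffices to check that they agree when applied to $\xi^v$ for an arbitrary $\xi\in\Gamma(E^*)$. The only tools needed are the characterising relations \eqref{eq: hor-ver}, Lemma \ref{lem: hor-ver}, the definition of the vertical lift of an endomorphism, and the definition of the dual connection on $E^*$ (which we denote by the same symbol $\nabla$).

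First I would dispose of the two easy brackets. For $[\varphi^\text{v},\psi^\text{v}]_\text{Lie}$, applying it to $\xi^v$ gives $\varphi^\text{v}(\pr^*\xi(\psi))-\psi^\text{v}(\pr^*\xi(\varphi))$, which vanishes by the last equality in Lemma \ref{lem: hor-ver}. For $[X^\text{h},\varphi^\text{v}]_\text{Lie}$, the computation is
\begin{equation*}
[X^\text{h},\varphi^\text{v}]_\text{Lie}\xi^v = X^\text{h}(\pr^*\xi(\varphi)) - \varphi^\text{v}((\nabla_X\xi)^v) = \pr^*\bigl(X(\xi(\varphi))-(\nabla_X\xi)(\varphi)\bigr) = \pr^*(\xi(\nabla_X\varphi)),
\end{equation*}
which is $(\nabla_X\varphi)^\text{v}\xi^v$ by \eqref{eq: hor-ver}.

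The interesting identity is the first. Applying $[X^\text{h},Y^\text{h}]_\text{Lie}$ to $\xi^v$ and iterating the first equation of \eqref{eq: hor-ver} gives
\begin{equation*}
[X^\text{h},Y^\text{h}]_\text{Lie}\xi^v = \bigl((\nabla_X\nabla_Y - \nabla_Y\nabla_X)\xi\bigr)^v,
\end{equation*}
while $[X,Y]_\text{Lie}^\text{h}\xi^v = (\nabla_{[X,Y]_\text{Lie}}\xi)^v$. The difference is therefore the curvature of the dual connection on $E^*$ applied to $\xi$. The main (and essentially only) nontrivial step will be to identify this dual curvature in terms of $R\nb$ and its vertical lift as an endomorphism: using the standard relation $(\nabla_X\xi)(\varphi)=X(\xi(\varphi))-\xi(\nabla_X\varphi)$ one obtains
\begin{equation*}
\bigl((\nabla_X\nabla_Y - \nabla_Y\nabla_X - \nabla_{[X,Y]_\text{Lie}})\xi\bigr)(\varphi) = -\xi\bigl(R\nb(X,Y)\varphi\bigr),
\end{equation*}
so the section of $E^*$ we must lift is $-\tr(\xi\otimes R\nb(X,Y))$. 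By the very definition of $(\,)^\upsilon$, its vertical lift equals $-R\nb(X,Y)^\upsilon\xi^v$, which proves the claim.

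Once the sign bookkeeping in this dual-connection computation is done correctly, the three identities follow and Proposition \ref{prop: char-vf} upgrades the pointwise-on-$\xi^v$ check to an equality of vector fields on $E$. I do not foresee any other obstacle; in particular no partition of unity or local coordinate argument is needed, since $\Gamma(E^*)$ separates points of $E$ through the vertical lift $(\,)^v$.
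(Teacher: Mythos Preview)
Your proof is correct and follows essentially the same approach as the paper: both apply each bracket to $\xi^v$ for arbitrary $\xi\in\Gamma(E^*)$, reduce via \eqref{eq: hor-ver} and Lemma \ref{lem: hor-ver} to the dual-connection curvature identity, and then invoke Proposition \ref{prop: char-vf}. Your treatment is a bit more explicit about the sign in the dual curvature, but the argument is the same.
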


\begin{proof}
 By \eqref{eq: hor-ver} and Lemma \ref{lem: hor-ver}, we have, for $\xi\in\Gamma(E^*)$, that
\begin{align*}
 [X^\text{h}, Y^\text{h}]_\text{Lie}\,\xi^v&=(\nabla_X\nabla_Y\xi-\nabla_Y\nabla_X\xi)^v=(\nabla_{[X,Y]_\text{Lie}}\xi-R_\nabla(X,Y)^t\xi)^\upsilon,\\
 &=([X,Y]_\text{Lie}^\text{h}-R_\nabla(X,Y)^\upsilon)\xi^v\\
[X^\text{h},\varphi^\text{v}]_\text{Lie}\,\xi^v&=X^\text{h}\pr^*\xi(\varphi)-\pr^*(\nabla_X\xi)(\varphi)=\pr^*\xi(\nabla_X\varphi)=(\nabla_X\varphi)^\text{v}\xi^v,\\
[\varphi^\text{v}, \psi^\text{v}]_\text{Lie}\,\xi^v&=\varphi^\text{v}\pr^*\xi(\psi)-\psi^\text{v}\pr^*\xi(\varphi)=0.
\end{align*}
The result follows from Proposition \ref{prop: char-vf}.
\end{proof}

\bibliographystyle{alpha}\bibliography{refs}

\newcommand{\etalchar}[1]{$^{#1}$}
\begin{thebibliography}{IdLMP99}

\bibitem[ACD02]{AleSCM}
D.~V. Alekseevsky, V.~Cort\'es, and C.~Devchand.
\newblock Special complex manifolds.
\newblock {\em J. Geom. Phys.}, 42(1-2):85--105, 2002.

\bibitem[Bar12]{BarLeib}
D.~Baraglia.
\newblock Leibniz algebroids, twistings and exceptional generalized geometry.
\newblock {\em J. Geom. Phys.}, 62(5):903--934, 2012.

\bibitem[CFM21]{CraLPG}
M.~Crainic, R.~L. Fernandes, and I.~M\u{a}rcu\c{t}.
\newblock {\em Lectures on {P}oisson geometry}, volume 217 of {\em Graduate
  Studies in Mathematics}.
\newblock American Mathematical Society, 2021.

\bibitem[CJKS18]{ChatanteCA}
A.~Chatzistavrakidis, L.~Jonke, F.~S. Khoo, and R.~J. Szabo.
\newblock Double field theory and membrane sigma-models.
\newblock {\em J. High Energy Phys.}, pages 015, front matter+53, 2018.

\bibitem[{\v C}M24]{CapIPK}
Andreas {\v C}ap and Thomas Mettler.
\newblock Induced almost para-{K}\"ahler {E}instein metrics on cotangent
  bundles.
\newblock {\em Q. J. Math.}, 75(4):1285--1299, 2024.

\bibitem[Cou90]{CouDM}
T.~J. Courant.
\newblock Dirac manifolds.
\newblock {\em Trans. Amer. Math. Soc.}, 319(2):631--661, 1990.

\bibitem[CSX13]{ChenRCA}
Zhuo Chen, Mathieu Sti\'enon, and Ping Xu.
\newblock On regular {C}ourant algebroids.
\newblock {\em J. Symplectic Geom.}, 11(1):1--24, 2013.

\bibitem[DM18]{dunajski-mettler}
Maciej Dunajski and Thomas Mettler.
\newblock Gauge theory on projective surfaces and anti-self-dual {E}instein
  metrics in dimension four.
\newblock {\em J. Geom. Anal.}, 28(3):2780--2811, 2018.

\bibitem[Dri87]{DriQG}
V.~G. Drinfel'd.
\newblock Quantum groups.
\newblock In {\em Proceedings of the {I}nternational {C}ongress of
  {M}athematicians, ({B}erkeley, 1986)}, pages 798--820. Amer. Math. Soc.,
  1987.

\bibitem[GFGM25]{futaki-inv}
M.~Garcia-Fernandez and R.~Gonzalez~Molina.
\newblock Futaki invariants and {Y}au's conjecture on the {H}ull-{S}trominger
  system.
\newblock {\em J. Reine Angew. Math.}, 822:221--257, 2025.

\bibitem[Gua04]{GuaGCG}
M.~Gualtieri.
\newblock Generalized complex geometry.
\newblock {\em Oxford University doctoral thesis}, [arXiv:0401221], 2004.

\bibitem[Hit03]{HitGCYM}
Nigel Hitchin.
\newblock Generalized {C}alabi-{Y}au manifolds.
\newblock {\em Q. J. Math.}, 54(3):281--308, 2003.

\bibitem[HS{\v S}{\etalchar{+}}19]{conformal-PW}
M.~Hammerl, K.~Sagersching, J.~{\v S}ilhan, A.~Taghavi-Chabert, and V.~{\v
  Z}ádník.
\newblock Conformal {P}atterson-{W}alker metrics.
\newblock {\em Asian J. Math.}, 23(5):703--734, 2019.

\bibitem[IdLMP99]{IbaLA}
R.~Ibáñez, M.~de~Le\'on, J.~C. Marrero, and E.~Padr\'on.
\newblock Leibniz algebroid associated with a {N}ambu-{P}oisson structure.
\newblock {\em J. Phys. A}, 32(46):8129--8144, 1999.

\bibitem[JMV23]{Palatini}
B.~Jurčo, F.~Moučka, and J.~Vysoký.
\newblock Palatini variation in generalized geometry and string effective
  actions.
\newblock {\em J. Geom. Phys.}, 191:Paper No. 104909, 11, 2023.

\bibitem[JV17]{CouVys}
B.~Jurčo and J.~Vysoký.
\newblock Courant algebroid connections and string effective actions.
\newblock In {\em Noncommutative geometry and physics 4}, pages 211--265. World
  Sci. Publ., 2017.

\bibitem[KS04]{KosDB}
Yvette Kosmann-Schwarzbach.
\newblock Derived brackets.
\newblock {\em Lett. Math. Phys.}, 69:61--87, 2004.

\bibitem[KSCV25]{FricoDDS}
Julian Kupka, Charles Strickland-Constable, and Fridrich Valach.
\newblock Direct derivation of $\mathcal{N}=1$ supergravity in ten dimensions
  to all orders in fermions.
\newblock {\em J. High Energy Phys.}, (7):Paper No. 114, 28, 2025.

\bibitem[LBM09]{LiBlCAPG}
David Li-Bland and Eckhard Meinrenken.
\newblock Courant algebroids and {P}oisson geometry.
\newblock {\em Int. Math. Res. Not. IMRN}, pages 2106--2145, 2009.

\bibitem[LWX97]{LiuMTLB}
Zhang-Ju Liu, Alan Weinstein, and Ping Xu.
\newblock Manin triples for {L}ie bialgebroids.
\newblock {\em J. Differential Geom.}, 45(3):547--574, 1997.

\bibitem[Mau17]{MauCDGLA}
James Maunder.
\newblock Koszul duality and homotopy theory of curved {L}ie algebras.
\newblock {\em Homology Homotopy Appl.}, 19(1):319--340, 2017.

\bibitem[Mic87]{MichRFN}
Peter~W. Michor.
\newblock Remarks on the {F}r\"olicher-{N}ijenhuis bracket.
\newblock In {\em Differential geometry and its applications ({B}rno, 1986)},
  volume~27 of {\em Math. Appl. (East European Ser.)}, pages 197--220. Reidel,
  Dordrecht, 1987.

\bibitem[MR25a]{SymCartan}
Filip Moučka and Roberto Rubio.
\newblock Symmetric {C}artan calculus, the {P}atterson-{W}alker metric and
  {K}illing vector fields.
\newblock [arXiv:2501.12442], 2025.

\bibitem[MR25b]{SymPoisson}
Filip Moučka and Roberto Rubio.
\newblock Symmetric {P}oisson geometry, totally geodesic foliations and
  {J}acobi-{J}ordan algebras.
\newblock [arXiv:2508.15890], 2025.

\bibitem[Pra67]{PraFR}
Jean Pradines.
\newblock Th\'eorie de {L}ie pour les groupo\"ides diff\'erentiables. {C}alcul
  diff\'erenetiel dans la cat\'egorie des groupo\"ides infinit\'esimaux.
\newblock {\em C. R. Acad. Sci. Paris S\'er. A-B}, 264:A245--A248, 1967.

\bibitem[PW52]{PatRE}
E.~M. Patterson and A.~G. Walker.
\newblock Riemann extensions.
\newblock {\em Quart. J. Math. Oxford Ser. (2)}, 3:19--28, 1952.

\bibitem[Rub14]{RubBn}
R.~Rubio.
\newblock Generalized geometry of type ${B}_n$.
\newblock {\em Oxford University doctoral thesis}, 2014.

\bibitem[{\v S}ev00]{SevL}
Pavol {\v S}evera.
\newblock Letters to {A}lan {W}einstein about {C}ourant algebroids, 1998-2000.

\bibitem[{\v S}V20]{SevCou}
Pavol {\v S}evera and Fridrich Valach.
\newblock Courant algebroids, {P}oisson-{L}ie {T}-duality, and type {II}
  supergravities.
\newblock {\em Comm. Math. Phys.}, 375(1):307--344, 2020.

\bibitem[Svo18]{SvoASPH}
David Svoboda.
\newblock Algebroid structures on para-{H}ermitian manifolds.
\newblock {\em J. Math. Phys.}, 59(12):122302, 24, 2018.

\bibitem[Vai05]{VaipreCA}
I.~Vaisman.
\newblock Transitive {C}ourant algebroids.
\newblock {\em Int. J. Math. Math. Sci.}, 11:1737--1758, 2005.

\bibitem[Vai12]{VaiGDFT}
I.~Vaisman.
\newblock On the geometry of double field theory.
\newblock {\em J. Math. Phys.}, 53(3), 2012.

\bibitem[YP67]{Patlift}
K.~Yano and E.~M. Patterson.
\newblock Vertical and complete lifts from a manifold to its cotangent bundle.
\newblock {\em J. Math. Soc. Japan}, 19:91--113, 1967.

\end{thebibliography}

\end{document}